\newcommand{\amdqtwo}{Dell PowerEdge R415 servers with Dual 3.1GHz AMD 6-Core Opteron 4334 CPUs and 64GB of memory}
\def\rit{\mathbb{R}}
\newtheorem{theorem}{Theorem}[section]
\newtheorem{lemma}[theorem]{Lemma}
\newtheorem{proposition}[theorem]{Proposition}
\newtheorem{corollary}[theorem]{Corollary}
\newcommand{\refacwpf}{(\hyperref[model:ac_opf_w]{AC-OPF-W})}
\begin{document}

\title{Strengthening the SDP Relaxation of AC Power Flows with \\ Convex Envelopes, Bound Tightening, and Lifted Nonlinear Cuts} 
\author[1,2]{Carleton Coffrin}
\author[1,2]{Hassan Hijazi}
\author[3]{Pascal Van Hentenryck}

\affil[1]{Optimisation Research Group, NICTA}
\affil[2]{The Australian National University, Canberra, 2601, Australia}
\affil[3]{University of Michigan, Ann Arbor, MI-48109, USA}
\maketitle

\abstract{ This paper considers state-of-the-art convex relaxations
  for the AC power flow equations and introduces valid cuts based
  on convex envelopes and lifted nonlinear constraints. These valid
  linear inequalities strengthen existing semidefinite and quadratic
  programming relaxations and dominate existing cuts proposed in the
  literature.  Combined with model intersection and bound
  tightening, the new linear cuts close 8 of the remaining 16 open
  test cases in the NESTA archive for the AC Optimal Power Flow
  problem.}

\section*{Nomenclature}
\begin{multicols}{2} 

\begin{description}
  \item [{$N$}]  - The set of nodes in the network 
  \item [{$E$}]  - The set of {\em from} edges in the network 
  %
  \item [{$\bm i$}] - imaginary number constant
  \item [{$I$}] - AC current
  \item [{$S = p+ \bm iq$}] - AC power
  \item [{$V = v \angle \theta$}]  - AC voltage
  \item [{$Z = r+ \bm ix$}] - Line impedance
  \item [{$Y = g + \bm ib$}]  - Line admittance
  \item [{$W = w^R+ \bm i w^I $}]  - Product of two AC voltages
  %
  \item [{$s^u$}] - Line apparent power thermal limit
  \item [{$\theta_{ij}$}] - Phase angle difference (i.e. $\theta_i - \theta_j$)
  \item [{$\phi$}] - Phase angle difference center
  \item [{$\delta$}] - Phase angle difference offset
  \item [{$S^d = p^d+ \bm iq^d$}] - AC power demand
  \item [{$S^g = p^g+ \bm iq^g$}] - AC power generation
  \item [{$c_0,c_1,c_2$}] - Generation cost coefficients 
 %
   \item [{$\Re(\cdot)$}] - Real component of a complex number
   \item [{$\Im(\cdot)$}] - Imaginary component of a complex number
   \item [{$(\cdot)^*$}] - Conjugate of a complex number
   \item [{$|\cdot|$}] - Magnitude of a complex number, $l^2$-norm
  %
  %
  \item [{$x^u$}] - Upper bound of $x$
  \item [{$x^l$}] - Lower bound of $x$
  \item [{$x^\sigma$}] - Sum of the bounds (i.e. $x^l + x^u$)
  \item [{$\widecheck{x}$}] - Convex envelope of $x$
  \item [{$\bm x$}] - A constant value
\end{description}

\end{multicols}

\clearpage

\section{Introduction}

Convex relaxations of the AC power flow equations have attracted
significant interest in recent years. These include the semidefinite
Programming (SDP) \cite{Bai2008383}, Second-Order Cone (SOC)
\cite{Jabr06}, Convex-DistFlow (CDF) \cite{6102366}, and the recent
Quadratic Convex (QC) \cite{QCarchive} and Moment-Based
\cite{7038397,6980142} relaxations. Much of the excitement underlying
this line of research comes from the fact that the SDP relaxation was
shown to be tight \cite{5971792} on a variety of AC Optimal Power Flow
(AC-OPF) test cases distributed with Matpower \cite{matpower}, opening
a new avenue for accurate, reliable, and efficient solutions to a
variety of power system applications. Indeed, industrial-strength
optimization tools (e.g., Gurobi \cite{gurobi}, Cplex \cite{cplex},
Mosek \cite{mosek}) are now readily available to solve various classes
of convex optimization problems.

It was long thought that the SDP relaxation was the tightest convex
relaxation of the power flow equations.  However, recent works have
demonstrated that realistic test cases can exhibit a non-zero
optimality gap with this relaxation \cite{nesta,7056568}.  These new
test cases also demonstrate that the QC relaxation can be tighter than
the SDP relaxation in some cases \cite{qc_opf_tps}.  This result was
further extended in \cite{cp_qc_fp} to show that the QC relaxation,
when combined with a bound tightening procedure, is stronger than the
SDP relaxation in the vast majority of cases. However, at least 16
AC-OPF test cases in NESTA v0.6.0 \cite{nesta} still exhibit an
optimality gap above 1\% using the relaxation
developed in \cite{cp_qc_fp},

This paper builds on these results (i.e., \cite{QCarchive, qc_opf_tps,
  cp_qc_fp,opfBranchDecomp}) trying to
further improve existing convex relaxations in order to close the
optimality gap on the remaining open test cases. Its main
contributions can be summarized as follows. The paper
\begin{enumerate}
\item develops stronger power flow relaxations dominating state-of-the-art methods;

\item proposes a novel approach to generating valid inequalities for non-convex programs;

\item utilizes this novel approach to develop {\em Extreme cuts} and
  {\em lifted nonlinear cuts} for the AC power flow equations, which
  can be used to strengthen power flow relaxations;

\item presents computational results demonstrating that the optimality
  gap on many of the open test cases can be reduced to less than 1\%,
  using a combination of the methods developed herein.
\end{enumerate}
\noindent
The computational study is conducted on 71 AC Optimal Power Flow test
cases from NESTA v0.6.0, which feature realistic side-constraints and
incorporate bus shunts, line charging, and transformers.

The rest of the paper is organized as follows.  Section
\ref{sec:ac:pf} reviews the formulation of the AC-OPF problem from
first principles and presents the key operational side constraints for
AC network operations.  Section \ref{sec:relaxations} derives the
state-of-the-art SDP and QC relaxations.  Section \ref{sec:tighten}
presents three orthogonal and compositions methods for tightening
convex relaxations and applies those to the AC power flow constraints.
Section \ref{sec:experiments} reports the benefits of the various
tightening methods on AC-OPF test cases, and Section
\ref{sec:conclusion} concludes the paper.

\section{AC Optimal Power Flow}
\label{sec:ac:pf}

This section reviews the specification of AC Optimal Power Flow (AC-OPF)
and introduces the notations used in the paper. In the equations,
constants are always in bold face. 

A power network is composed of a variety of components such as buses, lines, generators, and loads.  
The network can be interpreted as a graph $(N,E)$ where the set of buses $N$ represent the nodes and the set of lines $E$ represent the edges. 
Note that $E$ is an undirected set of edges, however each edge $(i,j) \in E$ is assigned a {\em from} side $(i,j)$ and a {\em to} side $(j,i)$, arbitrarily.
These two sides are critically important as power is lost as it flows from one side to another. 
Lastly, to break numerical symmetries in the model and to allow easy comparison of solutions, a reference node $r \in N$ is also specified.

The AC power flow equations are
based on complex quantities for current $I$, voltage $V$, admittance
$Y$, and power $S$, which are linked by the physical properties of
Kirchhoff's Current Law (KCL), i.e.,
\begin{align}
& I^g_i - {\bm I^d_i} = \sum_{\substack{(i,j)\in E}} I_{ij} + \sum_{\substack{(j,i)\in E}} I_{ij} 
\end{align}
Ohm's Law, i.e.,
\begin{align}
& I_{ij} = \bm Y_{ij} (V_i - V_j)  \label{current_flow}
\end{align}
and the definition of AC power, i.e.,
\begin{align}
& S_{ij} = V_{i}I_{ij}^* \label{complex_power}
\end{align}
Combining these three properties yields the AC Power Flow equations, i.e.,
\begin{subequations}
\begin{align}
& S^g_i - {\bm S^d_i} = \sum_{\substack{(i,j)\in E}} S_{ij} + \sum_{\substack{(j,i)\in E}} S_{ij} \;\; \forall i\in N \\ 
& S_{ij} = \bm Y^*_{ij} V_i V^*_i - \bm Y^*_{ij} V_i V^*_j \;\; (i,j),(j,i) \in E
\end{align}
\end{subequations}

\noindent
Observe that $\sum$ over $(i,j)\in E$ collects the edges oriented in the 
{\em from} direction and $\sum$ over $(j,i)\in E$ collects the edges oriented in the {\em to} direction around bus $i\in N$.
These non-convex nonlinear equations define how power flows in the
network and are a core building block in many power system
applications. However, practical applications typically include various
operational side constraints. We now review some 
of the most significant ones.

\paragraph*{Generator Capacities}

AC generators have limitations on the amount of active and reactive
power they can produce $S^g$, which is characterized by a generation
capability curve \cite{9780070359581}.  Such curves typically define
nonlinear convex regions which are most-often approximated by boxes in
AC transmission system test cases, i.e.,
\begin{subequations}
\begin{align}
& \bm {S^{gl}}_i \leq S^g_i \leq \bm {S^{gu}}_i \;\; \forall i \in N 
\end{align}
\end{subequations}

\paragraph*{Line Thermal Limits}

Power lines have thermal limits \cite{9780070359581} to prevent
lines from sagging and automatic protection devices from activating.
These limits are typically given in Volt Amp units and bound 
the apparent power flow on a given line, i.e.,
\begin{align}
& |S_{ij}| \leq \bm {s^u}_{ij} \;\; \forall (i,j),(j,i) \in E 
\end{align}

\paragraph*{Bus Voltage Limits}

Voltages in AC power systems should not vary too far (typically $\pm
10\%$) from some nominal base value \cite{9780070359581}.  This is
accomplished by putting bounds on the voltage magnitudes, i.e.,
\begin{align}
& \bm {v^l}_i \leq |V_i| \leq \bm {v^u}_i \;\; \forall i \in N
\end{align}
A variety of power flow formulations only have variables for the
square of the voltage magnitude, i.e., $|V_i|^2$.  In such cases, the
voltage bound constrains can be incorporated via the following constraints:
\begin{align}
& ( \bm {v^l}_{i} )^2 \leq |V_i|^2 \leq ( \bm {v^u}_{i} )^2 \;\; \forall i \in N \label{eq:v_mag_sqr}
\end{align}

\paragraph*{Phase Angle Differences}

Small phase angle differences are also a design imperative in AC power
systems \cite{9780070359581} and it has been suggested that phase
angle differences are typically less than $10$ degrees in practice
\cite{Purchala:2005gt}. These constraints have not typically been
incorporated in AC transmission test cases \cite{matpower}. However,
recent work \cite{LPAC_ijoc,QCarchive,cp_qc_fp} have observed that
incorporating Phase Angle Difference (PAD) constraints, i.e.,
\begin{align}
&  \bm {\theta^l}_{ij} \leq \angle \! \left( V_i V^*_j \right) \leq \bm {\theta^u}_{ij} \;\; \forall (i,j) \in E \label{eq:pad_1}
\end{align}
is useful in characterizing the feasible space of the AC power flow equations.
This work assumes that the phase angle difference bounds and within the range $(- \bm \pi/2, \bm \pi/2 )$, i.e.,
\begin{align}
& -\frac{\bm \pi}{2} \leq \bm {\theta^l}_{ij} \leq \bm {\theta^u}_{ij} \leq \frac{\bm \pi}{2} \;\; \forall(i,j) \in E
\end{align}
Given the design imperatives of AC power systems \cite{9780070359581,Purchala:2005gt}, this does not appear to be a significant limitation.
Observe also that these PAD constraints \eqref{eq:pad_1} can be
implemented as a linear relation of the real and imaginary components
of $V_iV^*_j$ \cite{6810520},
\begin{align}
& \tan(\bm {\theta^l}_{ij}) \Re\left(V_iV^*_j\right) \! \leq \!  \Im\left(V_iV^*_j\right) \! \leq \! \tan(\bm {\theta^u}_{ij}) \Re\left(V_iV^*_j\right) \;\; \forall(i,j) \in E \label{eq:w_pad}
\end{align}
The usefulness of this formulation will be apparent later in the
paper.

\paragraph*{Other Constraints}
Other line flow constraints have been proposed, such as, active power
limits and voltage difference limits \cite{5971792,6810520}.  However,
we do not consider them here since, to the best of our knowledge, test
cases incorporating these constraints are not readily available.

\paragraph*{Objective Functions}

The last component in formulating AC-OPF problems is an objective
function. The two classic objective functions are line loss
minimization, i.e.,
\begin{align}
& \mbox{minimize: } \sum_{i \in N} \Re(S^g_i)  \label{eq:loss_min}
\end{align}
and generator fuel cost minimization, i.e.,
\begin{align}
& \mbox{minimize: } \sum_{i \in N} \bm c_{2i} (\Re(S^g_i))^2 + \bm c_{1i}\Re(S^g_i) + \bm c_{0i} \label{eq:fule_min}
\end{align}
Observe that objective \eqref{eq:loss_min} is a special case of
objective \eqref{eq:fule_min} where $\bm c_{2i}\!=\!0, \bm
c_{1i}\!=\!1, \bm c_{0i}\!=\!0 \;\; (i \! \in \! N)$ \cite{6153415}.
Hence, the rest of this paper focuses on objective
\eqref{eq:fule_min}.

\paragraph*{The AC Optimal Power Flow Problem}

Combining the AC power flow equations, the side constraints, and the
objective function, yields the well-known AC-OPF formulation presented
in Model \ref{model:ac_opf_w}.  This formulation utilizes a voltage product factorization (i.e. $V_i V_j^* = W_{ij} \;\; \forall (i,j)\in E$), a complete derivation of this formulation can be found in \cite{qc_opf_tps}.
In practice, this non-convex nonlinear optimization problem is typically solved with numerical methods \cite{744492,744495}, which provide locally optimal solutions if they converge to a feasible point.


\begin{model}[t]
\caption{ The AC Optimal Power Flow Problem with the W Factorization (AC-OPF-W).}
\label{model:ac_opf_w}
\begin{subequations}
\vspace{-0.2cm}
\begin{align}
\mbox{\bf variables: } \nonumber \\
& S^g_i \in ( \bm {S^{gl}}_i, \bm {S^{gu}}_i) \;\; \forall i\in N \nonumber \\
& V_i \in ( \bm {V^l}_i, \bm {V^u}_i ) \;\; \forall i\in N \nonumber \\
& W_{ij} \in ( \bm {W^l}_{ij}, \bm {W^u}_{ij} )  \;\; \forall i \in N,  \forall j \in N \\
& S_{ij} \in (\bm {S^{l}}_{ij},\bm {S^{u}}_{ij})\;\; \forall (i,j),(j,i) \in E \nonumber \\
\mbox{\bf minimize: }  \\
& \sum_{i \in N} \bm c_{2i} (\Re(S^g_i))^2 + \bm c_{1i}\Re(S^g_i) + \bm c_{0i} \label{w_obj} \\
\mbox{\bf subject to: } \nonumber \\
& \angle V_{\bm r} = 0 \\
&  W_{ij} = V_iV_j^* \;\; \forall (i,j)\in E \label{w_2} \\
& S^g_i - {\bm S^d_i} = \sum_{\substack{(i,j)\in E}} S_{ij} + \sum_{\substack{(j,i)\in E}} S_{ij}\;\; \forall i\in N \label{w_5} \\ 
& S_{ij} = \bm Y^*_{ij} W_{ii} - \bm Y^*_{ij} W_{ij} \;\; \forall (i,j)\in E \label{w_6} \\
& S_{ji} = \bm Y^*_{ij} W_{jj} - \bm Y^*_{ij} W_{ij}^* \;\; \forall (i,j)\in E \label{w_7} \\
& |S_{ij}| \leq (\bm {s^u}_{ij}) \;\; \forall (i,j),(j,i) \in E \label{w_8} \\
& \tan(\bm {\theta^l}_{ij}) \Re(W_{ij}) \leq \Im(W_{ij}) \leq \tan(\bm {\theta^u}_{ij}) \Re(W_{ij}) \;\; \forall (i,j) \in E \label{w_9}
\end{align}
\end{subequations}
\end{model}

A key message throughout this work and related works \cite{qc_opf_tps,cp_qc_fp} is that the bounds on the decision variables are a critical consideration in the AC-OPF problem.
Hence, the variable bounds are explicitly specified in Model \ref{model:ac_opf_w}.
Noting that bounds on the variables $V, W, S$ are most often omitted from power network datasets, we precent valid bounds here.
Suitable bounds for $V$ and $S$ can be deduced from the bus voltage and thermal limit constraints as follows,
\begin{subequations}
\begin{align}
& \bm {V^{u}}_i = \bm {v^u}_{i} + \bm i \bm {v^u}_{i}, \bm {V^{l}}_{ij} = -(\bm {v^u}_{i} + \bm i \bm {v^u}_{i})  \;\; \forall i \in N \nonumber \\
& \bm {S^{u}}_{ij} = \bm {s^u}_{ij} + \bm i \bm {s^u}_{ij}, \bm {S^{l}}_{ij} = -(\bm {s^u}_{ij} + \bm i \bm {s^u}_{ij}) \;\; \forall(i,j) \in E \nonumber
\end{align}
\end{subequations}
A derivation of these bounds can be found in \cite{nfcp_report}.  The bounds on the diagonal of the $W$ are as follows, 
\begin{align}
& \bm {W^{u}}_{ii} = \bm (\bm {v^u}_{i})^2 + \bm i 0, \bm {W^{l}}_{ii} = (\bm {v^l}_{i})^2 + \bm i 0 \;\; \forall i \in N \nonumber
\end{align}
These come directly from the bus voltage constraints \eqref{eq:v_mag_sqr}.

The off-diagonal entries of $W$ are broken into two groups, those belonging to $E$ and those not belonging to $E$.

\begin{lemma}
\label{lemma:w_non_egde_vars_bounds}
$\bm {W^{u}}_{ij} = \bm {v^u}_{i} \bm {v^u}_{j} + \bm i \bm {v^u}_{i} \bm {v^u}_{j}, \bm {W^{l}}_{ij} = -\bm {v^u}_{i} \bm {v^u}_{j} - \bm i \bm {v^u}_{i} \bm {v^u}_{j} \;\; \forall (i,j) \not\in E$ 
are valid bounds in \refacwpf.
\end{lemma}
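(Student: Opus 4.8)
The plan is to obtain the box on $W_{ij}$ directly from the voltage-magnitude limits through the voltage-product factorization. Although constraint \eqref{w_2} is stated only for $(i,j) \in E$, every entry of $W$ is intended to represent the product $V_i V_j^*$; for the non-edge entries this relationship is not written explicitly in \refacwpf\ but is exactly the interpretation under which the matrix $W$ is used in the lifted and semidefinite relaxations (where $W = VV^*$). I would therefore begin by recording $W_{ij} = V_i V_j^*$ for $(i,j) \notin E$ and then bound its real and imaginary parts.

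First I would bound the modulus of $W_{ij}$. The bus voltage limits \eqref{eq:v_mag_sqr} give $|V_i| \le \bm{v^u}_i$ and $|V_j| \le \bm{v^u}_j$, so
\[
  |W_{ij}| = |V_i V_j^*| = |V_i|\,|V_j| \le \bm{v^u}_i \bm{v^u}_j .
\]
Next I would pass from this disk bound to the desired box bounds using the elementary inequalities $|\Re(z)| \le |z|$ and $|\Im(z)| \le |z|$, valid for any complex number $z$. Applying them to $z = W_{ij}$ yields $-\bm{v^u}_i \bm{v^u}_j \le \Re(W_{ij}) \le \bm{v^u}_i \bm{v^u}_j$ together with the identical pair of inequalities for $\Im(W_{ij})$. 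These are precisely the real and imaginary components encoded in the stated $\bm{W^u}_{ij}$ and $\bm{W^l}_{ij}$, so every point with $W_{ij} = V_i V_j^*$ satisfies them and the bounds are valid.

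I expect the only delicate point to be the justification of \emph{why no tighter rectangle} is claimed for non-edges, rather than the short modulus computation above. For $(i,j) \in E$ the phase-angle-difference constraints \eqref{w_9} restrict $\angle(V_i V_j^*)$ and, together with the lower magnitude bound $\bm{v^l}$, could be exploited to shrink the box; for $(i,j) \notin E$ no angle constraint is present, so the phase of $W_{ij}$ is unconstrained and $W_{ij}$ may reach any point on the circle of radius $\bm{v^u}_i \bm{v^u}_j$. In particular the values $\pm \bm{v^u}_i \bm{v^u}_j$ are attainable separately for $\Re(W_{ij})$ and for $\Im(W_{ij})$, which shows that the axis-aligned box $[-\bm{v^u}_i \bm{v^u}_j, \bm{v^u}_i \bm{v^u}_j]$ on each coordinate is not only valid but the tightest one available in the absence of phase information.
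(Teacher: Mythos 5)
Your proof is correct and follows essentially the same route as the paper's: the paper writes $W_{ij} = v_i v_j \cos(\theta_{ij}) + \bm i\, v_i v_j \sin(\theta_{ij})$ and bounds each trigonometric factor in $[-1,1]$ with $v_i v_j \le \bm{v^u}_i \bm{v^u}_j$, which is exactly your modulus bound $|W_{ij}| \le \bm{v^u}_i \bm{v^u}_j$ combined with $|\Re(z)|, |\Im(z)| \le |z|$ in polar dress. Your explicit observations that constraint \eqref{w_2} is only stated for edges and that the box is tight absent phase information are welcome additions but do not change the substance of the argument.
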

\begin{proof}
Recall that the one of the real number representations of $W_{ij}$ is,
\begin{align}
W_{ij} = v_{i}v_{j}\cos(\theta_{ij}) + \bm i v_{i}v_{j}\sin(\theta_{ij})
\end{align}
Observe that $v_{i} \geq 0$,$v_{j} \geq 0$ and that no bounds are imposed on $\theta_{ij}$ between the buses not in $E$.  
Hence, the domains of both trigonometric functions are $(-1,1)$.
Consequently, the magnitude of each expression can be no greater than $\bm {v^u}_{i} \bm {v^u}_{j}$ and the feasible interval is $(- \bm {v^u}_{i} \bm {v^u}_{j}, \bm {v^u}_{i} \bm {v^u}_{j})$ in both cases.
\end{proof}

\begin{lemma}
\label{lemma:w_egde_vars_bounds}
\begin{align}
\bm {W^{u}}_{ij} &= 
\begin{cases*}
\bm {v^u}_{i} \bm {v^u}_{j} \cos(\bm {\theta^u}_{ij}) + \bm i \bm {v^l}_{i} \bm {v^l}_{j} \sin(\bm {\theta^u}_{ij}) & \text{if  $\bm {\theta^l}_{ij}, \bm {\theta^u}_{ij} \leq 0$ } \\
\bm {v^u}_{i} \bm {v^u}_{j} \cos(\bm {\theta^l}_{ij}) + \bm i \bm {v^u}_{i} \bm {v^u}_{j} \sin(\bm {\theta^u}_{ij})  & \text{if  $\bm {\theta^l}_{ij}, \bm {\theta^u}_{ij} \geq 0$ } \\
\bm {v^u}_{i} \bm {v^u}_{j} + \bm i \bm {v^u}_{i} \bm {v^u}_{j} \sin(\bm {\theta^u}_{ij})  & \text{if  $\bm {\theta^l}_{ij} < 0, \bm {\theta^u}_{ij} > 0$ }
\end{cases*} \;\; \forall (i,j) \in E  \nonumber \\
\bm {W^{l}}_{ij} &= 
\begin{cases*}
\bm {v^l}_{i} \bm {v^l}_{j} \cos(\bm {\theta^l}_{ij}) + \bm i \bm {v^u}_{i} \bm {v^u}_{j} \sin(\bm {\theta^l}_{ij}) & \text{if  $\bm {\theta^l}_{ij}, \bm {\theta^u}_{ij} \leq 0$ } \\
\bm {v^l}_{i} \bm {v^l}_{j} \cos(\bm {\theta^u}_{ij}) + \bm i \bm {v^l}_{i} \bm {v^l}_{j} \sin(\bm {\theta^l}_{ij})  & \text{if  $\bm {\theta^l}_{ij}, \bm {\theta^u}_{ij} \geq 0$ } \\
\min(\bm {v^l}_{i} \bm {v^l}_{j} \cos(\bm {\theta^l}_{ij}), \bm {v^l}_{i} \bm {v^l}_{j} \cos(\bm {\theta^u}_{ij})) + \bm i \bm {v^u}_{i} \bm {v^u}_{j} \sin(\bm {\theta^l}_{ij})  & \text{if  $\bm {\theta^l}_{ij} < 0, \bm {\theta^u}_{ij} > 0$ }
\end{cases*} \;\; \forall (i,j) \in E \nonumber
\end{align}
are valid bounds in \refacwpf.
\end{lemma}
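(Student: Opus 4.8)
The plan is to mirror the proof of Lemma~\ref{lemma:w_non_egde_vars_bounds}, starting again from the trigonometric representation
\[
W_{ij} = v_{i}v_{j}\cos(\theta_{ij}) + \bm i\, v_{i}v_{j}\sin(\theta_{ij}),
\]
so that $\Re(W_{ij}) = v_{i}v_{j}\cos(\theta_{ij})$ and $\Im(W_{ij}) = v_{i}v_{j}\sin(\theta_{ij})$. Since the real and imaginary parts decouple, I would bound each independently by separately maximizing and minimizing over the box defined by $v_i \in [\bm{v^l}_i, \bm{v^u}_i]$, $v_j \in [\bm{v^l}_j, \bm{v^u}_j]$, and the PAD interval $\theta_{ij} \in [\bm{\theta^l}_{ij}, \bm{\theta^u}_{ij}]$. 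This yields four optimizations (upper/lower bound of real/imaginary part), and matching each optimum against the corresponding piecewise expression in the statement establishes validity.

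The key facts to record first are the monotonicity properties on the admissible interval. Because $[\bm{\theta^l}_{ij}, \bm{\theta^u}_{ij}] \subseteq [-\bm\pi/2, \bm\pi/2]$, the sine is monotonically increasing, so $\sin(\theta_{ij})$ attains its extremes at the angle endpoints $\bm{\theta^u}_{ij}$ (max) and $\bm{\theta^l}_{ij}$ (min); and the cosine is nonnegative and even, decreasing in $|\theta_{ij}|$, hence maximized at the angle closest to $0$ and minimized at the angle of largest magnitude. With these in hand, the only remaining subtlety is the sign-coupling rule between the monotone trigonometric factor and the nonnegative product $v_i v_j$: when the factor is nonnegative the product is pushed to $\bm{v^u}_i \bm{v^u}_j$ to maximize and to $\bm{v^l}_i \bm{v^l}_j$ to minimize, and the reverse when the factor is nonpositive.

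Carrying this out across the three sign regimes of $(\bm{\theta^l}_{ij}, \bm{\theta^u}_{ij})$, I would argue as follows. For $\Re(W_{ij})$ the cosine factor is always nonnegative, so the product is driven to $\bm{v^u}_i \bm{v^u}_j$ for the upper bound and to $\bm{v^l}_i \bm{v^l}_j$ for the lower bound; the selected angle is the one closest to $0$ for the maximum (yielding $\cos(\bm{\theta^u}_{ij})$, $\cos(\bm{\theta^l}_{ij})$, or $1$ in the three regimes respectively) and the one of largest magnitude for the minimum. For $\Im(W_{ij})$ the sine attains its maximum at $\bm{\theta^u}_{ij}$ and minimum at $\bm{\theta^l}_{ij}$, and the accompanying $v_i v_j$ is chosen according to whether the sine is positive or negative at that endpoint (which is exactly what the sign regime dictates), giving $\bm{v^l}_i \bm{v^l}_j$ versus $\bm{v^u}_i \bm{v^u}_j$ as claimed.

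I expect the main obstacle to be the mixed-sign regime $\bm{\theta^l}_{ij} < 0 < \bm{\theta^u}_{ij}$ for the lower bound of the real part. Here the minimizing angle is whichever endpoint has the larger absolute value, which cannot be resolved from the signs alone; since the nonnegative factor $\bm{v^l}_i \bm{v^l}_j$ can be pulled outside, the bound must be expressed as $\min(\bm{v^l}_i \bm{v^l}_j \cos(\bm{\theta^l}_{ij}), \bm{v^l}_i \bm{v^l}_j \cos(\bm{\theta^u}_{ij}))$, precisely as in the statement. All other cases admit a definite endpoint, so once the monotonicity and sign-coupling rule are fixed the remaining verification is routine bookkeeping.
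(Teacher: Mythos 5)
Your proposal is correct and matches the paper's own proof in Appendix~\ref{sec:w_bounds}, which likewise decouples $\Re(W_{ij})=v_iv_j\cos(\theta_{ij})$ and $\Im(W_{ij})=v_iv_j\sin(\theta_{ij})$, establishes the monotonicity/sign lemmas for $\sin$, $\cos$, and nonnegative products, and composes them over the same three sign regimes of $(\bm{\theta^l}_{ij},\bm{\theta^u}_{ij})$, including the $\min(\cdot,\cdot)$ term for the real lower bound in the mixed-sign case. No gaps; the remaining work is exactly the routine bookkeeping you describe.
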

\noindent
A proof can be found in Appendix \ref{sec:w_bounds}.

\begin{corollary}
All of the decision variables in Model \ref{model:ac_opf_w} have well defined bounds parameterized by $\bm {v^l}_i, \bm {v^u}_i \; \forall i \in N$ and $\bm {s^u}_{ij}, \bm {\theta^l}_{ij}, \bm {\theta^u}_{ij} \; \forall (i,j) \in E$, which are readily available in power network datasets.
\end{corollary}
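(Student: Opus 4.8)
The plan is to prove the corollary by a direct enumeration over the four families of decision variables declared in Model \ref{model:ac_opf_w}, showing that each variable inherits a bound that can be written using only the listed parameters. Since all the analytical content has already been established in the preceding text and in Lemmas \ref{lemma:w_non_egde_vars_bounds} and \ref{lemma:w_egde_vars_bounds}, the corollary is essentially a collection argument, and I would organize it as a case split on variable type. The emphasis is on exhaustiveness: verifying that every declared variable is accounted for and that each cited bound references only the claimed quantities.

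First I would dispatch the generator variables $S^g_i$, whose bounds $\bm{S^{gl}}_i, \bm{S^{gu}}_i$ are supplied directly by the generation capability data and therefore require no derivation. Next I would recall the bounds given in the text immediately preceding the corollary: $\bm{V^u}_i, \bm{V^l}_i$ are expressed purely in terms of $\bm{v^u}_i$, and $\bm{S^u}_{ij}, \bm{S^l}_{ij}$ purely in terms of $\bm{s^u}_{ij}$, so the voltage and line-power variables are covered. The diagonal entries $W_{ii}$ are bounded by $(\bm{v^l}_i)^2$ and $(\bm{v^u}_i)^2$, a direct consequence of the voltage-magnitude constraints \eqref{eq:v_mag_sqr}.

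The remaining work concerns the off-diagonal entries $W_{ij}$ with $i \neq j$, where the partition into edge and non-edge pairs does the heavy lifting. For $(i,j) \notin E$ I would invoke Lemma \ref{lemma:w_non_egde_vars_bounds}, whose bounds depend only on $\bm{v^u}_i$ and $\bm{v^u}_j$; for $(i,j) \in E$ I would invoke Lemma \ref{lemma:w_egde_vars_bounds}, whose case expressions depend only on $\bm{v^l}, \bm{v^u}, \bm{\theta^l}, \bm{\theta^u}$. Since every index pair with $i \neq j$ lies in exactly one of these two classes, all off-diagonal $W$ variables are bounded by the listed quantities.

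The one point deserving care — and the closest thing to an obstacle — is establishing \emph{completeness} rather than the correctness of any individual bound: I must confirm that the families $S^g, V, W, S$ exhaust the variables of Model \ref{model:ac_opf_w}, and that within the $W$ family the diagonal entries, the edge off-diagonals, and the non-edge off-diagonals together partition the full index set $\{(i,j) : i \in N,\ j \in N\}$. Once this bookkeeping is checked, each bound is seen to reference only $\bm{v^l}_i, \bm{v^u}_i$ and $\bm{s^u}_{ij}, \bm{\theta^l}_{ij}, \bm{\theta^u}_{ij}$, together with the generation capability data, all of which are standard entries in power network datasets, and the claim follows.
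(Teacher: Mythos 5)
Your proposal is correct and follows exactly the argument the paper leaves implicit: the corollary is a collection statement, and the paper establishes it by the explicit bounds for $S^g$, $V$, $S$, and the diagonal of $W$ given in the preceding text, together with Lemmas \ref{lemma:w_non_egde_vars_bounds} and \ref{lemma:w_egde_vars_bounds} for the off-diagonal $W$ entries partitioned into non-edge and edge pairs. Your added attention to exhaustiveness of the case split is the right (and only) point requiring care, so no further comment is needed.
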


\paragraph*{Model Extensions}

In the interest of clarity, AC Power Flows, and their relaxations, are most often presented on the simplest version of the AC power flow equations.  
However, transmission system test cases include additional parameters such as bus shunts, line charging, and transformers, which complicate the AC power flow equations significantly.
In this paper, all of the results focus exclusively on the voltage product constraint \eqref{w_2}.  As a consequence, the results can be seamlessly extended to these more general cases easily by modifying the constant parameters in constraints \eqref{w_5}--\eqref{w_7}.  Real-world deployment of AC-OPF methods require even more extensions, discussed at length in \cite{Capitanescu20111731,real_opf}.  For similar reasons, it is likely that the results presented here will also extend to those real-world variants.

\section{Convex Relaxations of Optimal Power Flow}
\label{sec:relaxations}

Since the AC-OPF problem is NP-Hard \cite{verma2009power,ACSTAR2015} 
and numerical methods provide limited guarantees for determining feasibility and global optimally,
significant attention has been devoted to finding convex relaxations
of Model \ref{model:ac_opf_w}.  Such relaxations are appealing because
they are computationally efficient and may be used to:
\begin{enumerate}
\item bound the quality of AC-OPF solutions produced by locally optimal methods;
\item prove that a particular instance has no solution; 
\item produce a solution that is feasible in the original non-convex
  problem \cite{5971792}, thus solving the AC-OPF and guaranteeing
  that the solution is globally optimal.
\end{enumerate}
The ability to provide bounds is particularly important for the
numerous mixed-integer nonlinear optimization problems that arise in power
system applications.
For these reasons, a variety of convex relaxations of the AC-OPF have
been developed including, the SDP \cite{Bai2008383}, QC
\cite{QCarchive}, SOC \cite{Jabr06}, and Convex-DistFlow
\cite{6102366,distflow_report}. 
Moreover, since the SOC and Convex-DistFlow relaxations have
been shown to be equivalent \cite{6483453,distflow_report} and that the SOC 
relaxation is dominated by the SDP and QC relaxations \cite{qc_opf_tps}, 
this paper focuses on the SDP and QC relaxations and shows how they 
are derived from Model \ref{model:ac_opf_w}.  The key insight is that each relaxation
presents a different approach to convexifing constraints \eqref{w_2},
which are the only source of non-convexity in Model \ref{model:ac_opf_w}.

\paragraph*{The semidefinite Programming  (SDP) Relaxation} 

exploits the fact that the $W$ variables are defined by $V(V^*)^T$,
which ensures that $W$ is positive semidefinite (denoted by $W
\succeq 0$) and has rank 1 \cite{Bai2008383, 5971792, 6345272}.  These
conditions are sufficient to enforce constraints \eqref{w_2}
\cite{doi:10.1137/1038003}, i.e.,
\begin{equation}
W_{ij} = V_iV_j^* \; (i,j \in N) \;\; \Leftrightarrow \;\; W \succeq 0 \; \wedge \; \mbox{rank}(W) = 1 \nonumber
\end{equation}
The SDP relaxation \cite{sdpIntro,doi:10.1137/1038003} then drops the
rank constraint to obtain Model \ref{model:ac_opf_w_sdp}.

\begin{model}[t]
\caption{The SDP Relaxation (AC-OPF-W-SDP).}
\label{model:ac_opf_w_sdp}
\begin{subequations}
\vspace{-0.2cm}
\begin{align}
\mbox{\bf variables: } \nonumber \\
& S^g_i \in ( \bm {S^{gl}}_i, \bm {S^{gu}}_i) \;\; \forall i\in N \nonumber \\
& W_{ij} \in ( \bm {W^l}_{ij}, \bm {W^u}_{ij} )  \;\; \forall i \in N,  \forall j \in N \\
& S_{ij} \in (\bm {S^{l}}_{ij},\bm {S^{u}}_{ij})\;\; \forall (i,j),(j,i) \in E \nonumber \\
\mbox{\bf minimize: } & \eqref{w_obj} \nonumber \\
\mbox{\bf subject to: } & \mbox{\eqref{w_5}--\eqref{w_9}} \nonumber \\
& W \succeq 0 \label{w_sdp}
\end{align}
\end{subequations}
\end{model}

\paragraph*{The Quadratic Convex (QC) Relaxation}
was introduced to preserve stronger links between the voltage
variables \cite{QCarchive}.  It represents the voltages in polar
form (i.e., $V = v \angle \theta$) and links these real variables
to the $W$ variables, along the lines of \cite{780924,4548149,6661462,RomeroRamos2010562}, using the following equations:
\begin{subequations}
\begin{align}
& W_{ii} = v_{i}^2 \;\; i \in N \label{eq:w_link_1} \\
& \Re(W_{ij}) = v_{i}v_{j}\cos(\theta_i - \theta_j) \;\; \forall(i,j) \in E \label{eq:w_link_2} \\
& \Im(W_{ij}) = v_{i}v_{j}\sin(\theta_i - \theta_j) \;\; \forall(i,j) \in E \label{eq:w_link_3}
\end{align}
\end{subequations}
The QC relaxation then relaxes these equations by taking tight convex
envelopes of their nonlinear terms, exploiting the operational limits
for $v_i, v_j, \theta_i - \theta_j$. The convex envelopes for the
square and product of variables are well-known \cite{MacC76}, i.e.,
\begin{equation}
\tag{T-CONV}
\langle x^2 \rangle^T \equiv
\begin{cases*}
\widecheck{x}  \geq  x^2\\
\widecheck{x}  \leq  ( \bm {x^u} + \bm {x^l})x - \bm {x^u} \bm {x^l}
\end{cases*}
\end{equation}
\begin{equation*}
\tag{M-CONV}
\langle xy \rangle^M \equiv
\begin{cases*}
\widecheck{xy}  \geq  \bm {x^l}y + \bm {y^l}x - \bm {x^l}\bm {y^l}\\
\widecheck{xy}  \geq  \bm {x^u}y + \bm {y^u}x - \bm {x^u}\bm {y^u}\\
\widecheck{xy}  \leq  \bm {x^l}y + \bm {y^u}x - \bm {x^l}\bm {y^u}\\
\widecheck{xy}  \leq  \bm {x^u}y + \bm {y^l}x - \bm {x^u}\bm {y^l}
\end{cases*}
\end{equation*}
%
%
%
Under the assumption that the phase angle difference bound is
within $- \bm \pi/2 \leq \bm {\theta^l}_{ij} \leq \bm {\theta^u}_{ij} \leq \bm \pi/2$, relaxations for sine and
cosine are given by:
\begin{equation*}
\tag{C-CONV}
\langle \cos(x) \rangle^C \equiv
\begin{cases*}
\widecheck{cx}  \leq 1 - \frac{1-\cos({\bm {x^m}})}{({\bm {x^m}})^2} x^2\\
\widecheck{cx}  \geq \frac{\cos(\bm {x^l}) - \cos(\bm {x^u})}{(\bm {x^l}-\bm {x^u})}(x - \bm {x^l}) + \cos(\bm {x^l})
\end{cases*}
\end{equation*}
\begin{equation*}
\tag{S-CONV}
\langle \sin(x) \rangle^S \equiv
\begin{cases*}
\widecheck{sx} \leq \cos\left(\frac{\bm {x^m}}{2}\right)\left(x -\frac{\bm {x^m}}{2}\right) + \sin\left(\frac{\bm {x^m}}{2}\right) & \hspace{-2.0cm} \\
\widecheck{sx} \geq \cos\left(\frac{\bm {x^m}}{2}\right)\left(x +\frac{\bm {x^m}}{2}\right) - \sin\left(\frac{\bm {x^m}}{2}\right) & \hspace{-2.0cm} \\
%
\widecheck{sx} \geq \frac{\sin(\bm {x^l}) - \sin(\bm {x^u})}{(\bm {x^l}-\bm {x^u})}(x - \bm {x^l}) + \sin(\bm {x^l}) & if $\bm {x^l} \geq 0$ \\
\widecheck{sx} \leq \frac{\sin(\bm {x^l}) - \sin(\bm {x^u})}{(\bm {x^l}-\bm {x^u})}(x - \bm {x^l}) + \sin(\bm {x^l}) & if $ \bm {x^u} \leq 0$ \\
\end{cases*}
\end{equation*}
\noindent
where $\bm {x^m} = \max(|\bm {x^l}|, |\bm {x^u}|)$ \cite{cp_qc_fp}.
In the following, we abuse notation and use $\langle f(\cdot)
\rangle^{C}$ to denote the variable on the left-hand side of the
convex envelope $C$ for function $f(\cdot)$. When such an expression is used
inside an equation, the constraints $\langle f(\cdot) \rangle^{C}$ are
also added to the model.

\begin{model}[t]
\caption{The QC Relaxation (AC-OPF-W-QC).}
\label{model:ac_opf_w_qc}
\begin{subequations}
\begin{align}
\mbox{\bf variables: } \nonumber \\
& S^g_i \in ( \bm {S^{gl}}_i, \bm {S^{gu}}_i) \;\; \forall i\in N \nonumber \\
& W_{ii} \in ( \bm {W^l}_{ii}, \bm {W^u}_{ii} )  \;\; \forall i \in N \nonumber \\
& W_{ij} \in ( \bm {W^l}_{ij}, \bm {W^u}_{ij} )  \;\; \forall (i,j) \in E \nonumber \\
& S_{ij} \in (\bm {S^{l}}_{ij},\bm {S^{u}}_{ij})\;\; \forall (i,j),(j,i) \in E \nonumber \\
& v_i \angle \theta_i \in ( \bm {v^l}_i - \bm i \bm \infty, \bm {v^u}_i  + \bm i \bm \infty )  \;\; \forall i \in N \nonumber \\
%
%
\mbox{\bf minimize: } & \eqref{w_obj} \nonumber \\
\mbox{\bf subject to: } & \mbox{\eqref{w_5}--\eqref{w_9}, \eqref{w_soc}} \nonumber \\
& |W_{ij}|^2 \leq W_{ii}W_{jj} \;\; \forall (i,j)\in E \label{soc_1} \\
& \theta_{\bm r} = 0 \label{qc_0} \\
& W_{ii} = \langle v_i^2 \rangle^T  \;\; i \in N \label{qc_1} \\
&\Re(W_{ij}) = \langle \langle v_i v_j \rangle^M \langle \cos(\theta_i - \theta_j) \rangle^C \rangle^M \;\; \forall(i,j) \in E \label{qc_2} \\
&\Im(W_{ij}) = \langle \langle v_i v_j \rangle^M \langle \sin(\theta_i - \theta_j) \rangle^S \rangle^M  \;\; \forall(i,j) \in E \label{qc_3} 
\end{align}
\end{subequations}
\end{model}

Convex envelopes for equations
\eqref{eq:w_link_1}--\eqref{eq:w_link_3} can be obtained by composing
the convex envelopes of the functions for square, sine, cosine, and
the product of two variables, i.e.,
\begin{subequations}
\begin{align}
& W_{ii} = \langle v_i^2 \rangle^T  \;\; i \in N \\
&\Re(W_{ij}) = \langle \langle v_i v_j \rangle^M \langle \cos(\theta_i - \theta_j) \rangle^C \rangle^M \;\; \forall(i,j) \in E \\
&\Im(W_{ij}) = \langle \langle v_i v_j \rangle^M \langle \sin(\theta_i - \theta_j) \rangle^S \rangle^M  \;\; \forall(i,j) \in E 
\end{align}
\end{subequations}
The QC relaxation also proposes to strengthen these convex envelopes
with a second-order cone constraint from the well known SOC relaxation \cite{Jabr06}. 
This SOC relaxation takes the absolute square of each voltage product constraint in \eqref{w_2}, refactors it, and then relaxes the
equality into an inequality, i.e.,
\begin{subequations}
\begin{align}
& W_{ij} = V_iV^*_j \label{w_soc_1}\\
& W_{ij}W^*_{ij} = V_iV^*_jV^*_iV_j \label{w_soc_2} \\
& |W_{ij}|^2 = W_{ii}W_{jj} \label{w_soc_3}\\
& |W_{ij}|^2 \leq W_{ii}W_{jj} \label{w_soc}
\end{align}
\end{subequations}
Equation \eqref{w_soc} is a rotated second-order cone constraint which
is widely supported by industrial optimization tools.


The complete QC relaxation is presented in Model \ref{model:ac_opf_w_qc}.
A key observation of the QC relaxation is that the convex envelopes are determined by the variable bounds.
Hence, as the bounds become smaller the strength of the relaxation increases \cite{cp_qc_fp,qc_opf_tps}.

\section{Strengthening Convex Relaxations}
\label{sec:tighten}

It has been established that the SDP and QC relaxations have different strengths and weaknesses and one does not dominate the other \cite{cp_qc_fp,qc_opf_tps}.
In this work we develop a hybrid relaxation, which dominates both formulations.
This is accomplished by considering three orthogonal and compositional approaches to strengthening the SDP relaxation:
\begin{enumerate}
\item Model Intersection (e.g. \cite{Liberti04,Ruiz11})
\item Valid Inequalities (e.g. \cite{7056568,strong_soc_report})
\item Bound Tightening (e.g. \cite{cp_qc_fp,7328765})
\end{enumerate}
The rest of this section explains how each of these ideas is utilized to strengthen the SDP relaxation.
%

%
%

\subsection{Model Intersection}

\begin{model}[t!]
\caption{The Combined SDP \& QC Relaxation (AC-OPF-W-SDP+QC).}
\label{model:ac_opf_w_sdp_qc}
\begin{subequations}
\vspace{-0.2cm}
\begin{align}
\mbox{\bf variables: } \nonumber \\
& S^g_i \in ( \bm {S^{gl}}_i, \bm {S^{gu}}_i) \;\; \forall i\in N \nonumber \\
& W_{ij} \in ( \bm {W^l}_{ij}, \bm {W^u}_{ij} )  \;\; \forall i \in N,  \forall j \in N \\
& S_{ij} \in (\bm {S^{l}}_{ij},\bm {S^{u}}_{ij})\;\; \forall (i,j),(j,i) \in E \nonumber \\
& v_i \angle \theta_i \in ( \bm {v^l}_i - \bm i \bm \infty, \bm {v^u}_i  + \bm i \bm \infty )  \;\; \forall i \in N \nonumber \\
\mbox{\bf minimize: } & \eqref{w_obj} \nonumber \\
\mbox{\bf subject to: } & \mbox{\eqref{w_5}--\eqref{w_9}} \nonumber \\
& W \succeq 0 \label{w_sdp_qc_1} \\
& \theta_{\bm r} = 0 \label{w_sdp_qc_2} \\
& W_{ii} = \langle v_i^2 \rangle^T  \;\; i \in N \label{w_sdp_qc_3} \\
&\Re(W_{ij}) = \langle \langle v_i v_j \rangle^M \langle \cos(\theta_i - \theta_j) \rangle^C \rangle^M \;\; \forall(i,j) \in E \label{w_sdp_qc_4} \\
&\Im(W_{ij}) = \langle \langle v_i v_j \rangle^M \langle \sin(\theta_i - \theta_j) \rangle^S \rangle^M  \;\; \forall(i,j) \in E \label{w_sdp_qc_5} 
\end{align}
\end{subequations}
\end{model}

Given that the SDP and QC relaxations have different strengths and weaknesses \cite{qc_opf_tps}, a natural and strait-forward way to make a model that dominates both relaxations is to combine them, yielding a feasible set that is the intersection of both relaxations.  Model \ref{model:ac_opf_w_sdp_qc} presents such a model.  

Observe that the second order cone constraint in the QC \eqref{soc_1} is redundant in Model \ref{model:ac_opf_w_sdp_qc} and can be omitted.
The reasoning is that the positive semidefinite constraint \eqref{w_sdp_qc_1} ensures that every sub-matrix of $W$ is positive semidefinite \cite{Prussing86}. 
This includes the following 2-by-2 sub-matrices for each line,
\begin{align*}
 \begin{bmatrix}
  W_{ii} & W_{ij}\\
  W_{ij}^* & W_{jj} \\
 \end{bmatrix} \succeq 0  \;\; \forall (i,j)\in E 
\end{align*}
%
Applying the determinant characterization for positive semidefinite matrices yields,
\begin{subequations}
\begin{align*}
& 0 \leq W_{ii}W_{jj} - W_{ij}W_{ij}^* \;\; \forall (i,j)\in E \\
& |W_{ij}|^2 \leq W_{ii}W_{jj}  \;\; \forall (i,j)\in E
\end{align*}
\end{subequations}
which is equivalent to \eqref{soc_1}.

\subsection{Valid Inequalities}

It was recently demonstrated how valid inequalities can be used to strengthen the SDP and SOC relaxations of AC power flows \cite{7056568,strong_soc_report}.
In this section we develop three valid inequalities inspired by the fundamental source of non-convexity in the OPF problem,
\begin{align}
& W_{ij} = V_iV^*_j \;\; \forall (i,j) \in E \label{w_v_eq} 
\end{align}
We begin by observing that the non-convex constraint,
\begin{align}
& |W_{ij}|^2 = W_{ii}W_{jj}  \;\; \forall (i,j) \in E \label{w_eq} 
\end{align}
is a valid equation in any AC power flow model.  This property follows directly from \eqref{w_v_eq} as demonstrated by \eqref{w_soc_1}--\eqref{w_soc_3}.
The well-known second order cone constraint \eqref{w_soc} clearly provides a tight upper bound for \eqref{w_eq}.
The remaining question is how to develop a tight lower bound.  

We begin with Model \ref{model:svfs}, which includes a real number representation of \eqref{w_eq} and \eqref{w_9} plus the variable bounds.  Note that the bounds on $w^R_{ij}$ and $w^I_{ij}$ can be derived from Lemma \ref{lemma:w_egde_vars_bounds}.  

%
\begin{model}[t]
\caption{The Non-Convex Voltage Feasibility Set}
\label{model:svfs}
\begin{subequations}
\begin{align}
\mbox{\bf variables:} \nonumber \\
& w_{i}, w_{j} \mbox{ - voltage magnitude squared}  \nonumber \\
& w^R_{ij}, w^I_{ij}  \mbox{ - voltage product } \nonumber \\
\mbox{\bf subject to:} \nonumber \\
& (\bm {v^l}_i)^2 \leq w_i \leq (\bm {v^u}_i)^2  \label{eq:svfs_1} \\
& (\bm {v^l}_j)^2 \leq w_j \leq (\bm {v^u}_j)^2  \label{eq:svfs_2} \\
& \bm {w^{Rl}}_{ij} \leq w^R_{ij} \leq \bm {w^{Ru}}_{ij}  \label{eq:wr_b} \\
& \bm {w^{Il}}_{ij} \leq w^I_{ij} \leq \bm {w^{Iu}}_{ij}  \label{eq:wi_b} \\
& \tan(\bm {\theta^l}_{ij})w^R_{ij} \leq w^I_{ij} \leq \tan(\bm {\theta^u}_{ij})w^R_{ij} \label{eq:svfs_3}  \\
& (w^R_{ij})^2 + (w^I_{ij})^2 = w_i w_j \label{eq:svfs_4} 
\end{align}
\end{subequations}
\end{model}
The rest of this subsection is concerned with developing three valid inequalities for Model \ref{model:svfs}.
We first investigate the extreme points of the feasible region and then propose an {\em Extreme cut} based on the convex envelope of the quadratic function found in \eqref{w_eq}.
We then propose two valid convex {\em nonlinear cuts}, which are redundant in Model \ref{model:svfs}, but tighten its lifted convex relaxation. 

\subsubsection{An Illustrative Example}

Before developing analytical solutions, it is helpful to build intuition using an illustrative example. 
As presented, Model \ref{model:svfs} is defined over $(w^R_{ij}, w^I_{ij},w_{i},w_{j})  \in \rit^4$, which is not easy to visualize.
However, we observe that the nonlinear equation \eqref{eq:svfs_4} can be used to eliminate one of the variables, reducing the variable space to $\rit^3$.
We use $((w^R_{ij})^2 + (w^I_{ij})^2)/w_i = w_j$ to eliminate the $w_j$ variable and focus on the $(w^R_{ij}, w^I_{ij},w_{i})  \in \rit^3$ space.

\begin{figure}[t]
\center
     \begin{subfigure}{8.0cm}
        \includegraphics[width=8.0cm]{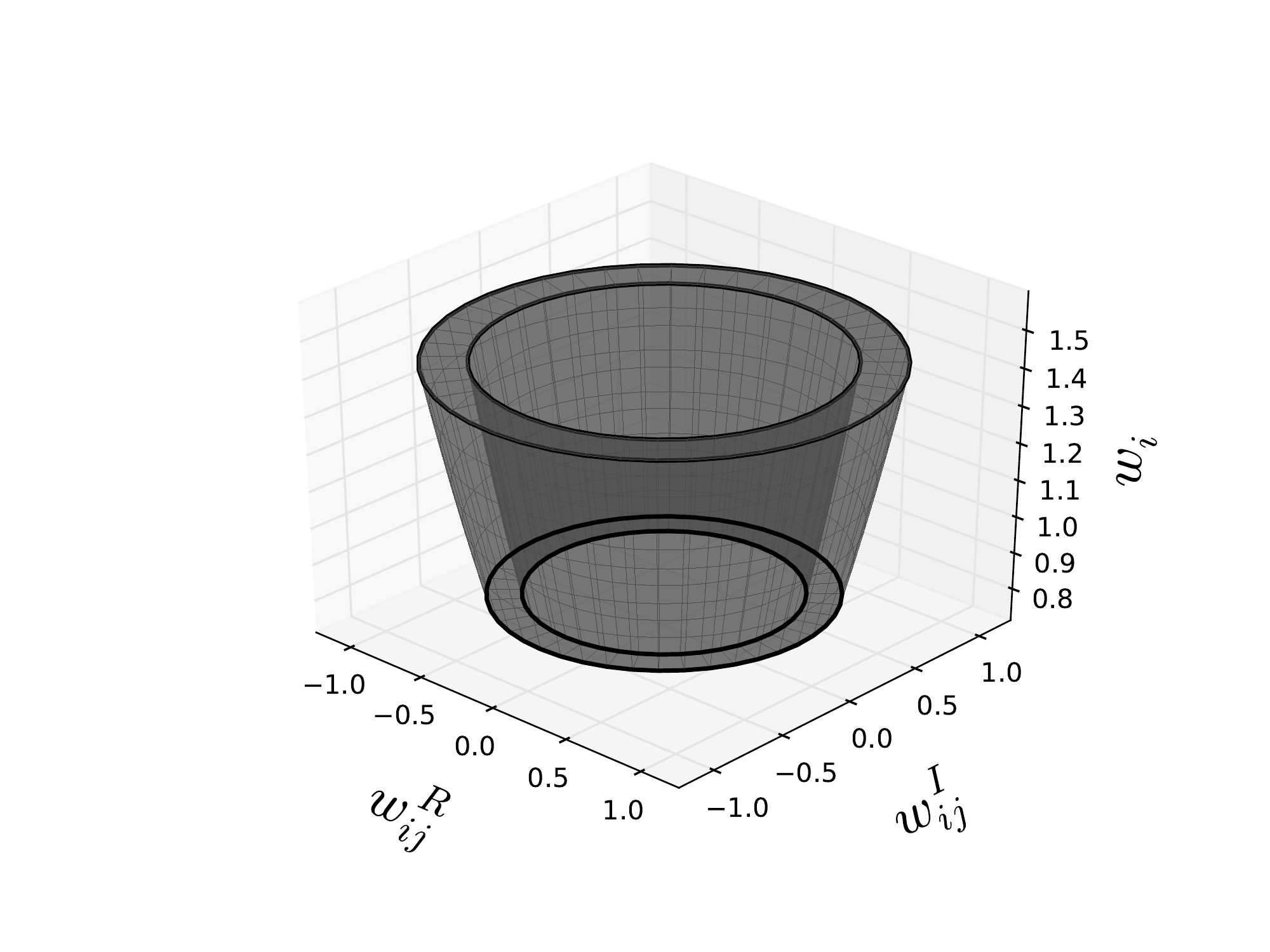} 
        \caption{Non-Convex Set without PAD Constraints.}
        \label{fig:w_set_std}
    \end{subfigure}
    \begin{subfigure}{8.0cm}
        \includegraphics[width=8.0cm]{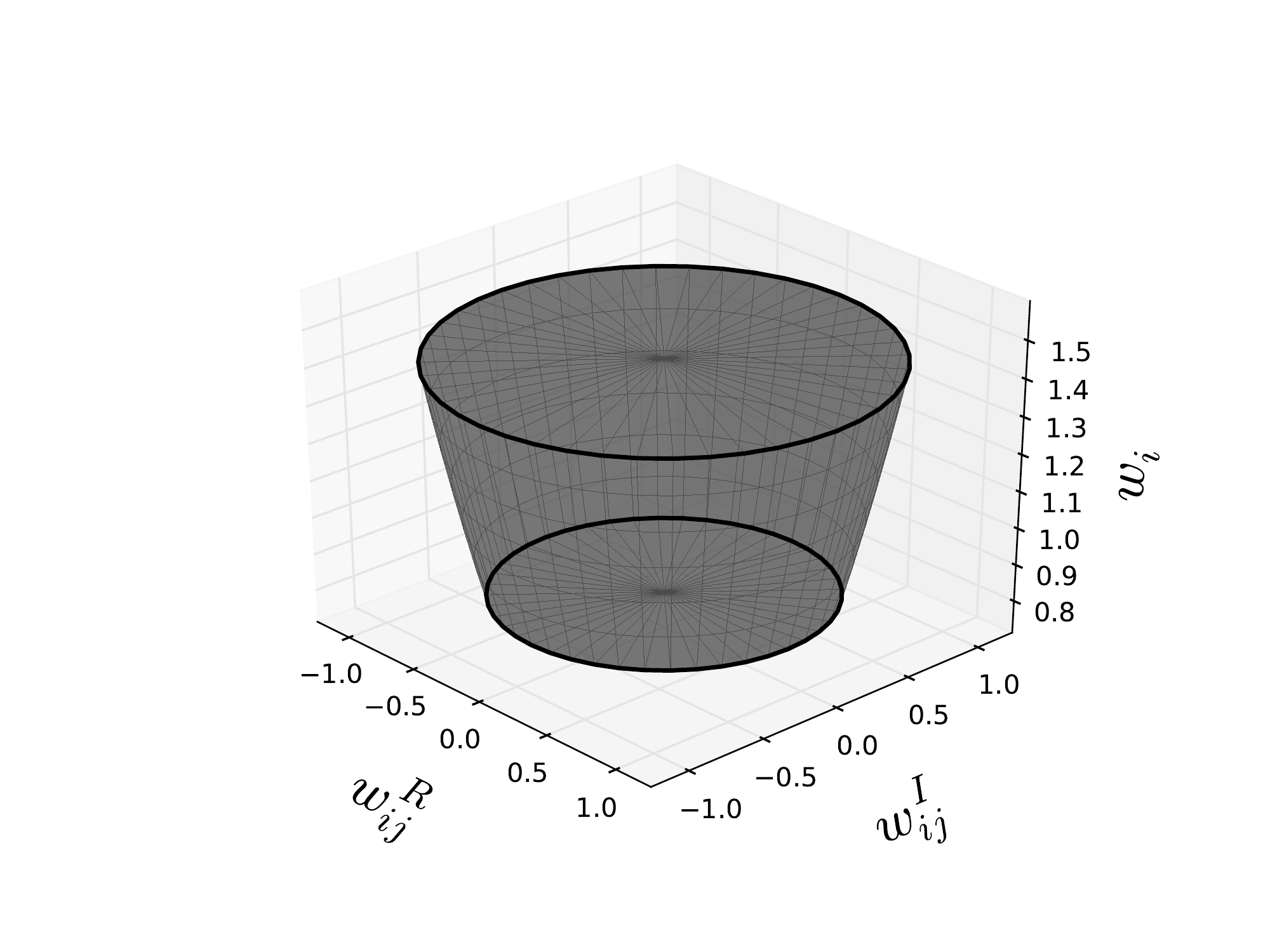} 
        \caption{Convex Hull without PAD Constraints.}
        \label{fig:w_set_std_cvx}
    \end{subfigure}  
    \begin{subfigure}{8.0cm}
        \includegraphics[width=8.0cm]{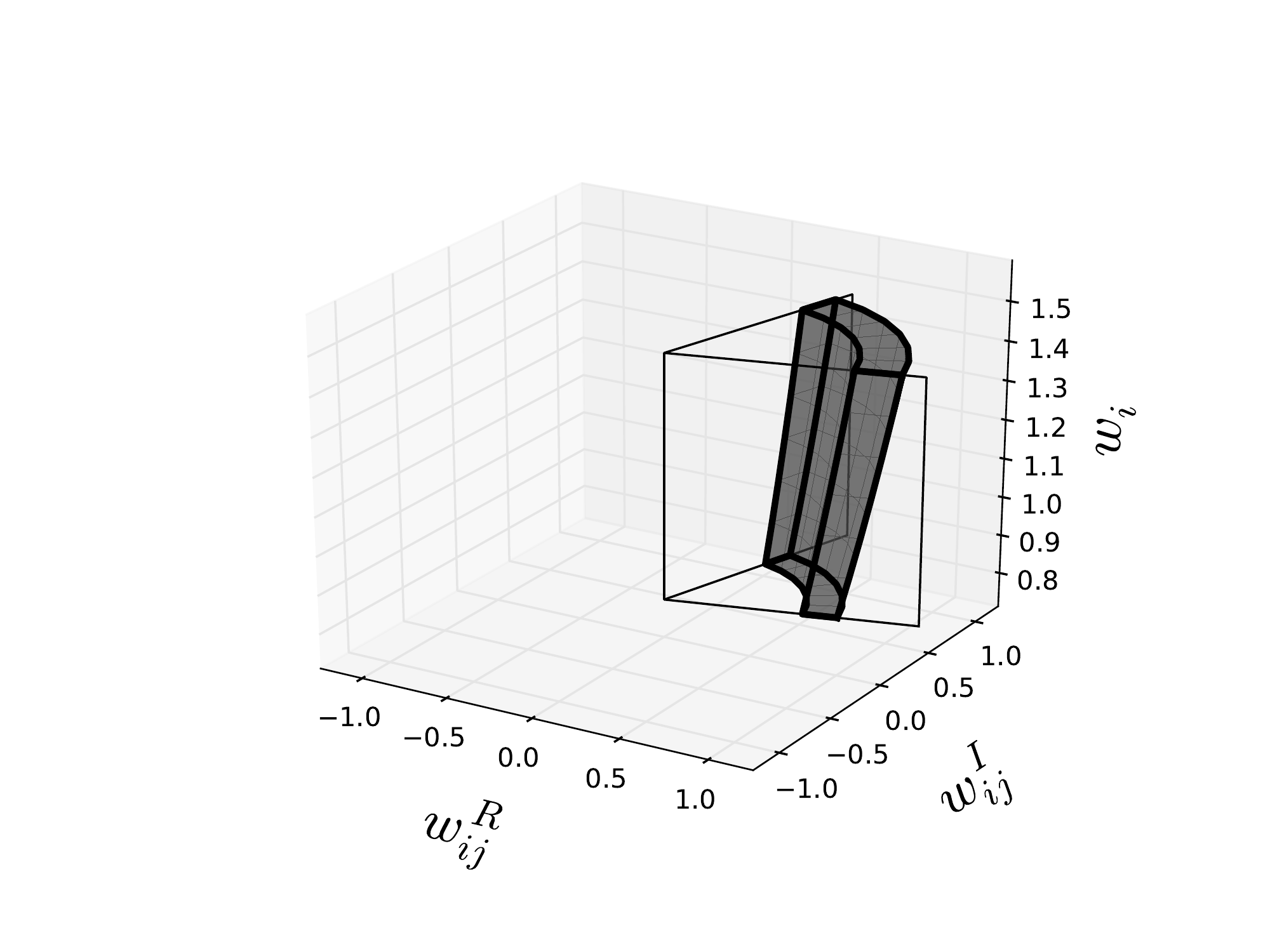} 
        \caption{Non-Convex Set with PAD Constraints}
        \label{fig:w_set_pad}
    \end{subfigure}  
    \begin{subfigure}{8.0cm}
        \includegraphics[width=8.0cm]{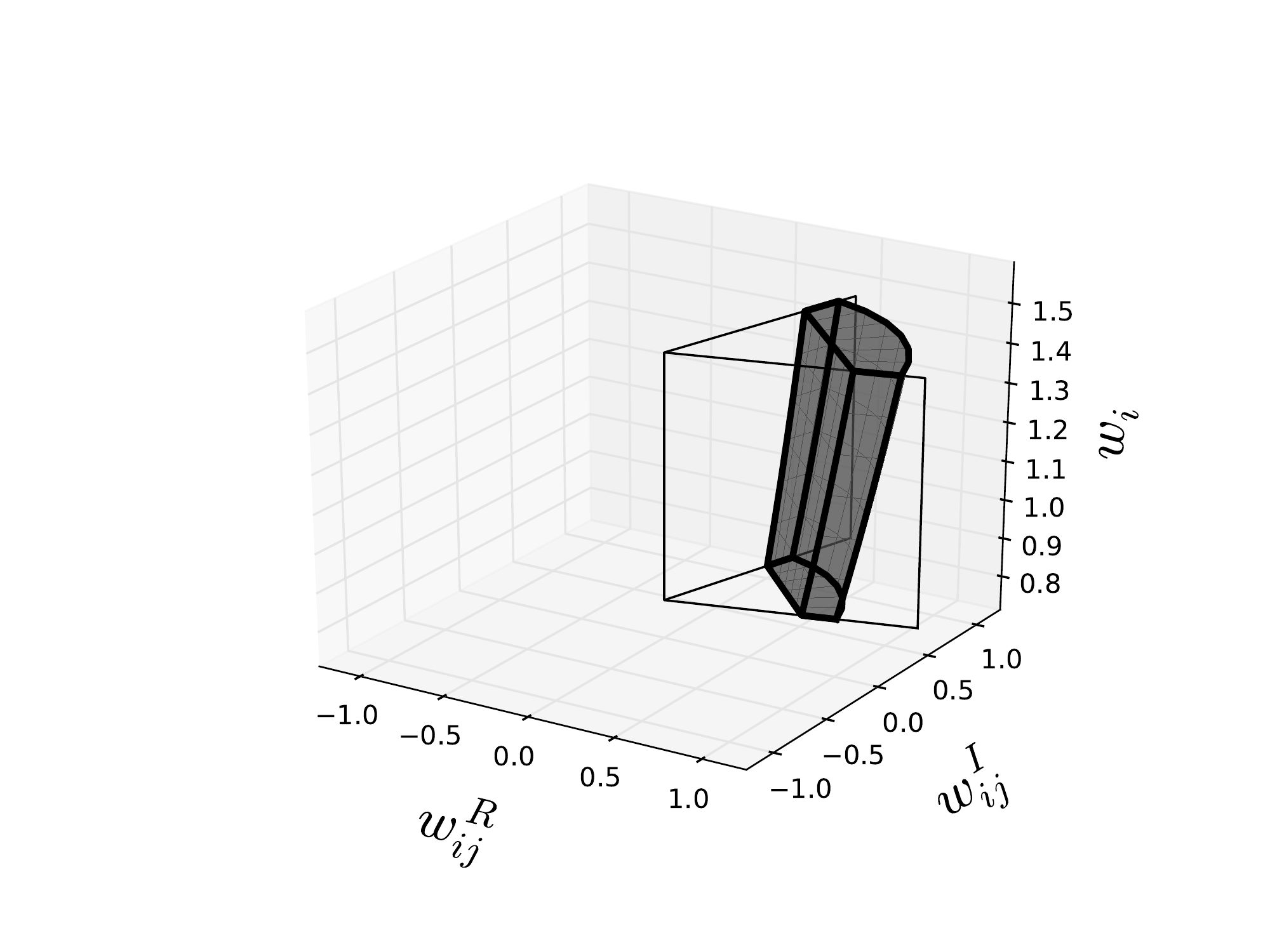} 
        \caption{Convex Hull with PAD Constraints.}
        \label{fig:w_set_pad_cvx}
    \end{subfigure}  
\caption{The Implications of PAD Constraints on the Convexification of \eqref{w_eq}.}
\label{fig:w_sets}
\end{figure}

let us consider Model \ref{model:svfs} with the parameters,
\begin{subequations}
\begin{align}
& \bm {v^l}_i = 0.9, \;\; \bm {v^u}_i = 1.2, \;\; \bm {v^l}_j = 0.8, \;\; \bm {v^u}_j = 1.0, \;\;  \bm {\theta^l}_{ij} = \bm \pi/12, \;\; \bm {\theta^u}_{ij} =  5 \bm \pi / 12 \nonumber 
\end{align}
\end{subequations}
Figure \ref{fig:w_sets} presents the solution set of Model \ref{model:svfs} with these parameters in the $(w^R_{ij}, w^I_{ij},w_{i})$ space.
This figure considers four cases, Model \ref{model:svfs} with and without the PAD constraint \eqref{eq:svfs_3} and the implications that this constraint has on the convexification of \eqref{w_eq}.
Figure \ref{fig:w_set_std} presents Model \ref{model:svfs} with only constraints on the voltage variables (i.e. \eqref{eq:svfs_1}--\eqref{eq:svfs_2},\eqref{eq:svfs_4}) and Figure \ref{fig:w_set_std_cvx}, illustrates the convex hull of that case.
Figure \ref{fig:w_set_pad} highlights the significant reduction in the feasible space when PAD constraints are considered (i.e.  \eqref{eq:svfs_1}--\eqref{eq:svfs_4}) and Figure \ref{fig:w_set_pad_cvx}, illustrates the much reduced convex hull.  
The next subsection develops an {\em Extreme cut} representing the analytical form of the convex hull illustrated in Figure \ref{fig:w_set_pad_cvx}.

\subsubsection{The Extreme Cut}


From this point forward, we use an alternate representation of the
voltage angle bounds.  Specifically, given $- \bm \pi/2 \leq \bm
{\theta^l}_{ij} < \bm {\theta^u}_{ij} \leq \bm \pi/2$, we define the following
constants:
\begin{subequations}
\begin{align}
& \bm \phi_{ij} = (\bm {\theta^u}_{ij} + \bm {\theta^l}_{ij})/2 \\
& \bm \delta_{ij} = (\bm {\theta^u}_{ij} - \bm {\theta^l}_{ij})/2
\end{align}
\end{subequations}
Observe that $\bm {\theta^l}_{ij} = \bm \phi_{ij}- \bm \delta_{ij}$
and $\bm {\theta^u}_{ij} = \bm \phi_{ij}+ \bm \delta_{ij}$.
Additionally, we define the following constants,
\begin{subequations}
\begin{align}
& \bm {v^\sigma}_i = \bm {v^l}_i + \bm {v^u}_i \label{eq:sec_1} \\
& \bm {v^\sigma}_j = \bm {v^l}_j + \bm {v^u}_j \label{eq:sec_2}
\end{align}
\end{subequations}
As this section demonstrates, the $\phi, \delta, v^\sigma$
representation is particularly advantageous for developing concise
valid inequalities for Model \ref{model:svfs}.

%

\begin{theorem}
The following Extreme cut is redundant in Model \ref{model:svfs},
\begin{equation}
\bm {v^l}_j \cos(\bm \delta_{ij}) w_i  - \bm {v^\sigma}_i \cos(\bm \phi_{ij})w^R_{ij} - \bm {v^\sigma}_i \sin(\bm \phi_{ij}) w^I_{ij} +  \bm {v^l}_i \bm {v^u}_i \bm {v^l}_j \cos(\bm \delta_{ij}) \leq 0. \label{eq:3d_cut}
\end{equation}
\end{theorem}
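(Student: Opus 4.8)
The plan is to parametrize the feasible points of Model~\ref{model:svfs} in polar form and reduce the cut to a sign condition on a factored quadratic. First I would use constraint~\eqref{eq:svfs_4} to write $|W_{ij}| = \sqrt{w_i w_j}$, and set $v_i = \sqrt{w_i}$, $v_j = \sqrt{w_j}$ (both positive since $\bm{v^l}_i, \bm{v^l}_j > 0$), so that $w^R_{ij} = v_i v_j \cos\theta$ and $w^I_{ij} = v_i v_j \sin\theta$ for some angle $\theta$. A short argument shows $w^R_{ij} > 0$: if $w^R_{ij} \le 0$, the PAD constraint~\eqref{eq:svfs_3} together with $\bm{\theta^l}_{ij} < \bm{\theta^u}_{ij}$ forces $w^R_{ij} = w^I_{ij} = 0$, contradicting~\eqref{eq:svfs_4}. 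With $w^R_{ij} > 0$, constraint~\eqref{eq:svfs_3} reads $\tan(\bm{\theta^l}_{ij}) \le w^I_{ij}/w^R_{ij} \le \tan(\bm{\theta^u}_{ij})$, which places $\theta \in [\bm{\theta^l}_{ij}, \bm{\theta^u}_{ij}]$.

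Next I would substitute this parametrization into the left-hand side of~\eqref{eq:3d_cut}. Using $\bm{\theta^l}_{ij} = \bm\phi_{ij} - \bm\delta_{ij}$, $\bm{\theta^u}_{ij} = \bm\phi_{ij} + \bm\delta_{ij}$, and the angle-sum identity $\cos(\bm\phi_{ij})\cos\theta + \sin(\bm\phi_{ij})\sin\theta = \cos(\theta - \bm\phi_{ij})$, the left-hand side collapses to
\[
\bm{v^l}_j \cos(\bm\delta_{ij})\, v_i^2 \;-\; \bm{v^\sigma}_i\, v_i v_j \cos(\theta - \bm\phi_{ij}) \;+\; \bm{v^l}_i \bm{v^u}_i \bm{v^l}_j \cos(\bm\delta_{ij}).
\]
This is where the $\phi, \delta, v^\sigma$ representation pays off: the two linear terms in $w^R_{ij}, w^I_{ij}$ merge into a single cosine of the centered angle $\theta - \bm\phi_{ij}$.

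The decisive step is bounding the middle term, which is strictly negative. Since $\theta \in [\bm\phi_{ij} - \bm\delta_{ij}, \bm\phi_{ij} + \bm\delta_{ij}]$ gives $|\theta - \bm\phi_{ij}| \le \bm\delta_{ij} < \bm\pi/2$, monotonicity of cosine yields $\cos(\theta - \bm\phi_{ij}) \ge \cos(\bm\delta_{ij}) > 0$; combined with $v_j \ge \bm{v^l}_j$ this gives $v_j\cos(\theta - \bm\phi_{ij}) \ge \bm{v^l}_j\cos(\bm\delta_{ij})$, so replacing these factors in the negative middle term can only increase the left-hand side. The expression is then bounded above by $\bm{v^l}_j \cos(\bm\delta_{ij})\bigl(v_i^2 - \bm{v^\sigma}_i v_i + \bm{v^l}_i \bm{v^u}_i\bigr) = \bm{v^l}_j \cos(\bm\delta_{ij})(v_i - \bm{v^l}_i)(v_i - \bm{v^u}_i)$, recalling $\bm{v^\sigma}_i = \bm{v^l}_i + \bm{v^u}_i$. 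Finally, since $w_i \in [(\bm{v^l}_i)^2, (\bm{v^u}_i)^2]$ gives $v_i \in [\bm{v^l}_i, \bm{v^u}_i]$, the product $(v_i - \bm{v^l}_i)(v_i - \bm{v^u}_i) \le 0$, while the prefactor $\bm{v^l}_j\cos(\bm\delta_{ij})$ is positive; hence the left-hand side is $\le 0$ at every feasible point, which is exactly the claimed redundancy of~\eqref{eq:3d_cut}.

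I expect the main obstacle to be the direction-of-inequality bookkeeping in the bounding step: the substitution of extreme values works only because the middle coefficient is negative, so I must verify the signs of $\bm{v^\sigma}_i$, $v_i$, $v_j$, and $\cos(\theta - \bm\phi_{ij})$ before replacing $v_j$ and $\cos(\theta - \bm\phi_{ij})$ with their extremes in the correct direction. The preliminary case analysis establishing $w^R_{ij} > 0$ (so that $\theta$ genuinely lands in $[\bm{\theta^l}_{ij}, \bm{\theta^u}_{ij}]$) also needs care, since without it constraint~\eqref{eq:svfs_3} cannot be read as a bound on $\theta$.
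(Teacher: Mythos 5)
Your proof is correct, but it takes a genuinely different route from the paper's. The paper relaxes the projected set $\mathcal S_p$ to a set $\mathcal S_r$ cut out by linear constraints plus the single reverse-convex constraint $w_i(\bm {v^l}_j)^2 \le (w^R_{ij})^2 + (w^I_{ij})^2$, then maximizes the cut's left-hand side $h$ over $\mathcal S_r$: invoking results on linear programs with a concave budget constraint \cite{Hillestad_74,Hillestad_80}, it argues that every maximizer lies on the intersection of the reverse-convex surface with an edge of the linear system, enumerates the four such points, and checks $h=0$ at each. You instead stay on the exact nonconvex set, parametrize it in polar form, collapse the two linear terms into $-\bm {v^\sigma}_i v_i v_j \cos(\theta - \bm \phi_{ij})$, bound $v_j\cos(\theta-\bm \phi_{ij}) \ge \bm {v^l}_j\cos(\bm \delta_{ij})$ using $|\theta - \bm \phi_{ij}| \le \bm \delta_{ij} < \bm \pi/2$, and finish with the factorization $v_i^2 - \bm {v^\sigma}_i v_i + \bm {v^l}_i\bm {v^u}_i = (v_i-\bm {v^l}_i)(v_i-\bm {v^u}_i) \le 0$. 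What each approach buys: yours is elementary and self-contained, needing no appeal to reverse-convex optimization theory and no verification of the edge-enumeration claims (non-redundancy of $f \le 0$, existence of exactly four intersection points); moreover your equality analysis implicitly recovers the same four touching points ($v_i\in\{\bm {v^l}_i,\bm {v^u}_i\}$, $v_j=\bm {v^l}_j$, $\theta\in\{\bm {\theta^l}_{ij},\bm {\theta^u}_{ij}\}$), confirming tightness of the cut. The paper's argument proves something slightly stronger --- validity of $h \le 0$ over the relaxation $\mathcal S_r$, not merely over $\mathcal S_p$ --- though the theorem as stated needs only what you prove. One shared hypothesis you rightly make explicit, while it remains implicit in the paper (e.g., in the assertion that $f\le 0$ cuts the points with $w^R_{ij}=w^I_{ij}=0$): strict positivity of $\bm {v^l}_i$ and $\bm {v^l}_j$, which your preliminary step establishing $w^R_{ij}>0$ genuinely requires; your sign bookkeeping for $\bm {v^\sigma}_i$, $\cos(\bm \delta_{ij})$, and the division by $w^R_{ij}$ in the PAD constraint is otherwise handled correctly.
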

\begin{proof}
As mentioned previously, Model \ref{model:svfs} can be reformulated in three dimensions using equation \eqref{eq:svfs_4}, which leads to the set 
$$\mathcal S_p=  \left \{\left(w^R_{ij}, w^I_{ij},w_{i}\right)  \in \rit^3  ~\left |~ 
	\begin{aligned}
				&(\ref{eq:svfs_1}),(\ref{eq:wr_b})-(\ref{eq:svfs_3})\nonumber\\
				&w_i(\bm {v^l}_j)^2 \leq (w^R_{ij})^2 + (w^I_{ij})^2 \leq w_i(\bm {v^u}_j)^2  \label{eq:svfs_2_}
	\end{aligned} \right.
\right\}.$$
Let
\begin{align*}
&f(w^R_{ij}, w^I_{ij},w_{i}) = w_i(\bm {v^l}_j)^2 - (w^R_{ij})^2 - (w^I_{ij})^2,\\
&h(w^R_{ij}, w^I_{ij},w_{i})= \bm {v^l}_j \cos(\bm \delta_{ij}) w_i  - \bm {v^\sigma}_i \cos(\bm \phi_{ij})w^R_{ij} - \bm {v^\sigma}_i \sin(\bm \phi_{ij}) w^I_{ij} +  \bm {v^l}_i \bm {v^u}_i \bm {v^l}_j \cos(\bm \delta_{ij}),
\end{align*}
and define the set,
$$\mathcal S_r =  \left \{\left(w^R_{ij}, w^I_{ij},w_{i}\right)  \in \rit^3  ~\left |~ 
	\begin{aligned}
				&f(w^R_{ij}, w^I_{ij},w_{i}) \le 0,\nonumber\\
				&(\bm {v^l}_i)^2 \leq w_i \leq(\bm {v^u}_i)^2, w^R_{ij} \le \bm {v^u}_i\bm {v^u}_j\nonumber\\
				&\tan(\bm {\theta^l}_{ij})w^R_{ij} \leq w^I_{ij} \leq \tan(\bm {\theta^u}_{ij})w^R_{ij}
	\end{aligned} \right.
\right\},$$
observe that $\mathcal S_r$ is a relaxation of $\mathcal S_p$. We will first show that $h(w^R_{ij}, w^I_{ij},w_{i}) \le 0,~ \forall (w^R_{ij}, w^I_{ij},w_{i}) \in \mathcal S_r$, and consequently $\forall (w^R_{ij}, w^I_{ij},w_{i}) \in \mathcal S_p$, as $\mathcal S_p \subset \mathcal S_r$.
Consider the nonlinear program 
\begin{align}
&\max ~h(w^R_{ij}, w^I_{ij},w_{i}) \nonumber\\ 
&\text{ s.t. } (w^R_{ij}, w^I_{ij},w_{i}) \in \mathcal S_r. \label{LPRC} \tag{LPRC}
\end{align}

\eqref{LPRC} is a linear program with a reverse-convex constraint, or a concave budget constraint. Note that $\mathcal S_r$ is a bounded non-empty set and $f(w^R_{ij}, w^I_{ij},w_{i}) \le 0$ is a non-redundant constraint as it cuts the points satisfying $w^R_{ij} = w^I_{ij} = 0$. This type of problem is studied in \cite{Hillestad_74,Hillestad_80} where it is shown that all optimal solutions lie at the intersection of the concave constraint and the edges of the linear system (intersection of $n-1$ linear inequalities). There are only four such points in our case,
	\begin{align*}
& \mbox{point 1: }  w_i = (\bm {v^l}_i)^2, \;\; w^R_{ij} = \bm {v^l}_i \bm {v^l}_j \cos(\bm \phi_{ij} - \bm \delta_{ij}), \;\; w^I_{ij} = \bm {v^l}_i \bm {v^l}_j \sin(\bm \phi_{ij} - \bm \delta_{ij}) \nonumber \\
& \mbox{point 2: }  w_i = (\bm {v^l}_i)^2, \;\; w^R_{ij} = \bm {v^l}_i \bm {v^l}_j \cos(\bm \phi_{ij} + \bm \delta_{ij}), \;\; w^I_{ij} = \bm {v^l}_i \bm {v^l}_j \sin(\bm \phi_{ij} + \bm \delta_{ij}) \nonumber \\
& \mbox{point 3: }  w_i = (\bm {v^u}_i)^2, \;\; w^R_{ij} = \bm {v^u}_i \bm {v^l}_j \cos(\bm \phi_{ij} - \bm \delta_{ij}), \;\; w^I_{ij} = \bm {v^u}_i \bm {v^l}_j \sin(\bm \phi_{ij} - \bm \delta_{ij}) \nonumber \\
& \mbox{point 4: }  w_i = (\bm {v^u}_i)^2, \;\; w^R_{ij} = \bm {v^u}_i \bm {v^l}_j \cos(\bm \phi_{ij} + \bm \delta_{ij}), \;\; w^I_{ij} = \bm {v^u}_i \bm {v^l}_j \sin(\bm \phi_{ij} + \bm \delta_{ij}) \nonumber 
	\end{align*}
	all of which satisfy $f(w^R_{ij}, w^I_{ij},w_{i}) = h(w^R_{ij}, w^I_{ij},w_{i}) = 0$. Since zero is the maximizer of \eqref{LPRC}, it follows that $h(w^R_{ij}, w^I_{ij},w_{i}) \le 0,~ \forall (w^R_{ij}, w^I_{ij},w_{i}) \in \mathcal S_r$ and consequently $\forall (w^R_{ij}, w^I_{ij},w_{i}) \in \mathcal{S}_p$.

\end{proof}
\noindent
Given the valid linear cut \eqref{eq:3d_cut}, we can define a convex relaxation of $\mathcal S_p$,
$$\mathcal S_c=  \left \{(w^R_{ij}, w^I_{ij},w_{i})  \in \rit^3  ~\left |~ 
	\begin{aligned}
				& (\ref{eq:svfs_1}),(\ref{eq:wr_b})-(\ref{eq:svfs_3}),(\ref{eq:3d_cut}), \nonumber \\
				& (w^R_{ij})^2 + (w^I_{ij})^2 \leq w_i(\bm {v^u}_j)^2
	\end{aligned} \right.
\right \}.$$


\noindent
An example of $\mathcal S_p$ and $\mathcal S_c$ are presented in Figures \ref{fig:w_set_pad} and \ref{fig:w_set_pad_cvx} respectively.\\
Let us emphasize that projecting the feasible region of Model \ref{model:svfs} into the $(w^R_{ij}, w^I_{ij},w_{j})$ space can lead to a similar Extreme cut,
\begin{equation}
\bm {v^l}_i \cos(\bm \delta_{ij}) w_j  - \bm {v^\sigma}_j \cos(\bm \phi_{ij})w^R_{ij} - \bm {v^\sigma}_j \sin(\bm \phi_{ij}) w^I_{ij} +  \bm {v^l}_j \bm {v^u}_j \bm {v^l}_i \cos(\bm \delta_{ij}) \leq 0. \label{eq:3d_cut2}
\end{equation}

\subsubsection{The Convex Nonlinear Cuts}
Let us emphasize that the convex relaxation of Model \ref{model:svfs} lives in a four-dimensional space, while the Extreme cuts defined above are three-dimensional, excluding the variable $w_j$.
In this section, we utilize the convex set $\mathcal S_c$ to develop two valid four-dimensional cuts based on lifting redundant constraints in the $(w^R_{ij}, w^I_{ij},w_{i})  \in \rit^3$ space.
\paragraph{The VUB Nonlinear Cut}
For clarity we begin by defining the following constants,
\begin{subequations}
\begin{align}
& \bm c_{11} = \bm {v^\sigma}_i\bm {v^\sigma}_j \cos(\bm \phi_{ij})  \\
& \bm c_{12} = \bm {v^\sigma}_i\bm {v^\sigma}_j \sin(\bm \phi_{ij})  \\
& \bm c_{13} = - \bm {v^u}_j \cos(\bm \delta_{ij})\bm {v^\sigma}_j  \\
& \bm c_{14} = - \bm {v^u}_i \cos(\bm \delta_{ij})\bm {v^\sigma}_i   \\
& \bm c_{15} = - \bm {v^u}_i\bm {v^u}_j \cos(\bm \delta_{ij})(\bm {v^l}_i\bm {v^l}_j - \bm {v^u}_i\bm {v^u}_j)  
\end{align}
\end{subequations}

Consider the optimization problem,
\begin{align}
& \min ~g(w^R_{ij}, w^I_{ij},w_{i}) = \bm c_{11} w^R_{ij} + \bm c_{12} w^I_{ij} + \bm c_{13} w_i + \bm c_{14} \frac{(w^R_{ij})^2 + (w^I_{ij})^2}{w_i} + \bm c_{15} \nonumber \\
&\text{s.t. } \left \{\label{NLP}   \tag{NLP}   
\begin{aligned}           
	&(\bm {v^l}_i)^2 \leq w_i \leq(\bm {v^u}_i)^2,\\
	&\tan(\bm {\theta^l}_{ij})w^R_{ij} \leq w^I_{ij} \leq \tan(\bm {\theta^u}_{ij})w^R_{ij},\\
	&\bm {v^l}_j \cos(\bm \delta_{ij}) w_i  - \bm {v^\sigma}_i \cos(\bm \phi_{ij})w^R_{ij} - \bm {v^\sigma}_i \sin(\bm \phi_{ij}) w^I_{ij} +  \bm {v^l}_i \bm {v^u}_i \bm {v^l}_j \cos(\bm \delta_{ij}) \leq 0
\end{aligned}\right.
\end{align}
%
%
%

\begin{proposition} \label{th:zero}
The optimal objective for \eqref{NLP} is non-negative.
\end{proposition}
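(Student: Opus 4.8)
The plan is to turn \eqref{NLP} into a concave minimization and argue that its optimum sits at an extreme point of the feasible set. Writing $q = \frac{(w^R_{ij})^2+(w^I_{ij})^2}{w_i}$, this quadratic-over-linear term is jointly convex on $\{w_i>0\}$, and since $\bm c_{14}=-\bm {v^u}_i\cos(\bm\delta_{ij})\bm {v^\sigma}_i<0$ (as $0\le\bm\delta_{ij}<\bm\pi/2$), the term $\bm c_{14}q$ is concave; the rest of $g$ is affine, so $g$ is concave. A concave function attains its minimum over a compact convex set at an extreme point, so it suffices to evaluate $g$ at the extreme points of the feasible region and check non-negativity.

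One preliminary point I would make explicit is that the feasible region must be read as compact. The three listed constraints alone leave it unbounded: along the ray $(w^R_{ij},w^I_{ij})=t(\cos\bm\phi_{ij},\sin\bm\phi_{ij})$ with $w_i$ held fixed, the affine part of $g$ grows like $t$ while $\bm c_{14}q$ decays like $-t^2$, so $g\to-\infty$. The statement is therefore to be understood with the bounds inherited from $\mathcal S_c$ in force, in particular the second-order-cone bound $(w^R_{ij})^2+(w^I_{ij})^2\le w_i(\bm {v^u}_j)^2$ together with the box of Lemma \ref{lemma:w_egde_vars_bounds}, which render the region a compact convex set with finitely many relevant extreme points.

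To organize the evaluation I would reparametrize by $w_i=v_i^2$, $w^R_{ij}=v_iv_j\cos(\theta_{ij})$, $w^I_{ij}=v_iv_j\sin(\theta_{ij})$, so that $q=v_j^2$ and $\bm c_{11}w^R_{ij}+\bm c_{12}w^I_{ij}=\bm {v^\sigma}_i\bm {v^\sigma}_j v_iv_j\cos(\theta_{ij}-\bm\phi_{ij})$, with $v_i\in[\bm {v^l}_i,\bm {v^u}_i]$ and $\theta_{ij}\in[\bm{\theta^l}_{ij},\bm{\theta^u}_{ij}]$. After lower-bounding $\cos(\theta_{ij}-\bm\phi_{ij})\ge\cos(\bm\delta_{ij})$, $g$ collapses to $\cos(\bm\delta_{ij})$ times a function $A(v_i,v_j)$ that is a downward parabola in each argument; the constants $\bm c_{11},\dots,\bm c_{15}$ appear calibrated so that $A$ vanishes at the upper-voltage configurations (one verifies directly $A(\bm {v^u}_i,\bm {v^u}_j)=0$, the source of the ``VUB'' label) and is non-negative at the remaining corners.

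The main obstacle is that the phase angle and the magnitude cannot be optimized independently: the naive corner argument in $(v_i,v_j)$ is too loose, because reducing $\cos(\theta_{ij}-\bm\phi_{ij})$ to $\cos(\bm\delta_{ij})$ and pushing $v_j$ down to $\bm {v^l}_j$ are not simultaneously attainable — the Extreme cut \eqref{eq:3d_cut} couples them, forcing $v_j$ upward exactly when $\theta_{ij}$ reaches a cone boundary. The rigorous route therefore keeps \eqref{eq:3d_cut} as an active facet and evaluates $g$ at the genuine extreme points, namely the intersections of \eqref{eq:3d_cut} with the cone edges \eqref{eq:svfs_3} and with a face of the $w_i$-box or the boundary $(w^R_{ij})^2+(w^I_{ij})^2= w_i(\bm {v^u}_j)^2$. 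Checking $g\ge0$ at these points is the remaining algebraic work, expanding everything in $\bm\phi_{ij},\bm\delta_{ij},\bm {v^\sigma}$ and using $\cos(\bm\delta_{ij})>0$ repeatedly; a cleaner alternative I would also pursue is an explicit certificate writing $g$ as a non-negative combination of the slacks of \eqref{eq:3d_cut}, the cone faces, and the bounds, which would bypass the enumeration entirely.
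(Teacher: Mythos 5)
Your skeleton --- concavity of $g$ because $\bm c_{14}<0$ multiplies the jointly convex quadratic-over-linear term, hence minimization at extreme points, hence a finite check --- is exactly the paper's argument (it cites the convexity result of Hijazi et al.\ and Benson's extreme-point theorem, then lists four vertices, all with $v_j=\bm {v^l}_j$ and $v_i\in\{\bm {v^l}_i,\bm {v^u}_i\}$ at the two cone edges, and asserts $g\ge 0$ there; in your $A$-notation those values are $\cos(\bm \delta_{ij})\bigl((\bm {v^u}_i)^2-(\bm {v^l}_i)^2\bigr)\bigl((\bm {v^u}_j)^2-(\bm {v^l}_j)^2\bigr)$ at points 1--2 and $0$ at points 3--4). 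Your preliminary observation is correct and exposes a genuine oversight in the paper's own proof: the three printed constraints of \eqref{NLP} leave the feasible set unbounded, $(\cos\bm \phi_{ij},\sin\bm \phi_{ij},0)$ is a feasible recession direction along which $g\to-\infty$, so as literally stated the optimal value of \eqref{NLP} is $-\infty$; the four listed points are indeed the only vertices of the polyhedron, but extreme-point attainment fails for concave minimization over an unbounded set. Your repair --- reading in the rotated-cone bound $(w^R_{ij})^2+(w^I_{ij})^2\le w_i(\bm {v^u}_j)^2$ from $\mathcal S_c$ --- is also the right one, and not an arbitrary choice: the cruder bound $w^R_{ij}\le \bm {v^u}_i\bm {v^u}_j$ used in $\mathcal S_r$ would \emph{not} do, since at its new vertex $w^R_{ij}=\bm {v^u}_i\bm {v^u}_j$, $w^I_{ij}=\tan(\bm {\theta^u}_{ij})w^R_{ij}$, $w_i=(\bm {v^u}_i)^2$ one gets $q=(\bm {v^u}_j)^2\sec^2(\bm {\theta^u}_{ij})$ and, e.g., for voltage bounds $[0.9,1.1]$ and angle bounds $\pm\bm \pi/4$, $g\approx-0.46<0$. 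Since Theorem \ref{thrm:3d_cut_1} only ever uses $g\ge0$ on $\mathcal S_c$, restating Proposition \ref{th:zero} over the compactified set loses nothing.

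The gap is that your argument stops where the proposition begins, and your extreme-point accounting on the compactified set is itself incomplete. The surface $(w^R_{ij})^2+(w^I_{ij})^2=w_i(\bm {v^u}_j)^2$ is a strictly convex paraboloid in $(w^R_{ij},w^I_{ij},w_i)$, so \emph{every} feasible point on it is an extreme point of the compactified region --- not merely its intersections with the facet \eqref{eq:3d_cut} --- and there are extreme points at which \eqref{eq:3d_cut} is inactive, e.g.\ the point corresponding to $(v_i,v_j,\theta_{ij})=(\bm {v^u}_i,\bm {v^u}_j,\bm {\theta^u}_{ij})$, where in fact $g=0$. So ``evaluate $g$ at the genuine extreme points'' requires a reduction you only gesture at: on the paraboloid, $q\equiv(\bm {v^u}_j)^2$ is constant, hence $g$ is affine there, and its minimum over that compact patch lies on the patch's relative boundary; each bounding curve is a cone edge (on which $g=\cos(\bm \delta_{ij})A(v_i,\bm {v^u}_j)$ is a concave parabola in $v_i$, minimized at its endpoints), a circular arc of angular width $\bm {\theta^u}_{ij}-\bm {\theta^l}_{ij}<\bm \pi$ (on which $\cos(\theta_{ij}-\bm \phi_{ij})$ is minimized at the endpoints), or the trace of \eqref{eq:3d_cut}. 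Only after this does the problem collapse to a finite list --- the paper's four vertices plus the upper-voltage corners --- and only then does the ``remaining algebraic work'' you defer actually discharge the claim; that work is short, e.g.\ $A(\bm {v^u}_i,\bm {v^l}_j)=0$ and $A(\bm {v^l}_i,\bm {v^l}_j)=\bigl((\bm {v^u}_i)^2-(\bm {v^l}_i)^2\bigr)\bigl((\bm {v^u}_j)^2-(\bm {v^l}_j)^2\bigr)\ge0$, but it is precisely the content of the proposition, and the dual-certificate alternative you mention is likewise asserted rather than constructed. Completing either route along these lines would yield a correct --- and indeed more careful --- version of the paper's proof.
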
 
\begin{proof}
In \cite{QCarchive}, Hijazi et al. prove that the function $f(x,y,z) = (x^2+y^2)/z, z >0$, is convex, thus \eqref{NLP} is a concave program as $\bm c_{14} < 0$.
Based on \cite{Benson95}, optimal solutions in \eqref{NLP} are extreme points of the feasibility region. There are four extreme points in \eqref{NLP},
\begin{align*}
& \mbox{point 1: }  w_i = (\bm {v^l}_i)^2, \;\; w^R_{ij} = \bm {v^l}_i \bm {v^l}_j \cos(\bm \phi_{ij} - \bm \delta_{ij}), \;\; w^I_{ij} = \bm {v^l}_i \bm {v^l}_j \sin(\bm \phi_{ij} - \bm \delta_{ij}) \nonumber \\
& \mbox{point 2: }  w_i = (\bm {v^l}_i)^2, \;\; w^R_{ij} = \bm {v^l}_i \bm {v^l}_j \cos(\bm \phi_{ij} + \bm \delta_{ij}), \;\; w^I_{ij} = \bm {v^l}_i \bm {v^l}_j \sin(\bm \phi_{ij} + \bm \delta_{ij}) \nonumber \\
& \mbox{point 3: }  w_i = (\bm {v^u}_i)^2, \;\; w^R_{ij} = \bm {v^u}_i \bm {v^l}_j \cos(\bm \phi_{ij} - \bm \delta_{ij}), \;\; w^I_{ij} = \bm {v^u}_i \bm {v^l}_j \sin(\bm \phi_{ij} - \bm \delta_{ij}) \nonumber \\
& \mbox{point 4: }  w_i = (\bm {v^u}_i)^2, \;\; w^R_{ij} = \bm {v^u}_i \bm {v^l}_j \cos(\bm \phi_{ij} + \bm \delta_{ij}), \;\; w^I_{ij} = \bm {v^u}_i \bm {v^l}_j \sin(\bm \phi_{ij} + \bm \delta_{ij}) \nonumber 
	\end{align*}
	all of which satisfy $g(w^R_{ij}, w^I_{ij},w_{i}) \ge 0$.
\end{proof} 

\begin{theorem} 
\label{thrm:3d_cut_1}
In the $(w^R_{ij}, w^I_{ij},w_{i})$ space, the following nonlinear cut is redundant with respect to $\mathcal S_p$.
\begin{subequations}
\begin{align}
& \bm c_{11} w^R_{ij} + \bm c_{12} w^I_{ij} + \bm c_{13} w_i + \bm c_{14} \frac{(w^R_{ij})^2 + (w^I_{ij})^2}{w_i} + \bm c_{15} \geq 0 \label{eq:3d_cut_1} 
\end{align}
\end{subequations}
\end{theorem}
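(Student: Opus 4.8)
The plan is to recognize that the left-hand side of \eqref{eq:3d_cut_1} is precisely the objective function $g(w^R_{ij}, w^I_{ij}, w_i)$ of the program \eqref{NLP}. Consequently, asserting that the cut is redundant with respect to $\mathcal S_p$ is the same as asserting that $g \geq 0$ holds at every point of $\mathcal S_p$. I would therefore reduce the theorem to a set-containment argument combined with Proposition \ref{th:zero}, rather than re-deriving anything about extreme points.

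First I would show that $\mathcal S_p$ is contained in the feasible region of \eqref{NLP}. That region is cut out by three families of constraints: the box $(\bm {v^l}_i)^2 \leq w_i \leq (\bm {v^u}_i)^2$, the phase-angle cone $\tan(\bm {\theta^l}_{ij})w^R_{ij} \leq w^I_{ij} \leq \tan(\bm {\theta^u}_{ij})w^R_{ij}$, and the Extreme cut \eqref{eq:3d_cut}. The first two are exactly constraints \eqref{eq:svfs_1} and \eqref{eq:svfs_3}, which appear in the very definition of $\mathcal S_p$, so they hold by construction. The third, the Extreme cut \eqref{eq:3d_cut}, was already established to be valid (redundant) for Model \ref{model:svfs}, and hence for $\mathcal S_p$, in the earlier theorem of this section. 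Thus every point of $\mathcal S_p$ satisfies all three families and lies in the feasible region of \eqref{NLP}. Along the way I would record that $w_i \geq (\bm {v^l}_i)^2 > 0$, so the ratio $((w^R_{ij})^2 + (w^I_{ij})^2)/w_i$ appearing in $g$ is well defined throughout.

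With the containment in hand, the conclusion is immediate. Proposition \ref{th:zero} states that the optimal (minimum) value of \eqref{NLP} is non-negative, which is to say $g \geq 0$ everywhere on its feasible region. Since $\mathcal S_p$ is a subset of that region, $g \geq 0$ on $\mathcal S_p$, which is precisely the inequality \eqref{eq:3d_cut_1}. Hence the cut is valid, i.e.\ redundant with respect to $\mathcal S_p$, completing the argument.

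The genuine work is not in this theorem but in Proposition \ref{th:zero}, which supplies the global non-negativity of $g$: there the delicate points are that $\bm c_{14} < 0$ together with the convexity of $(x^2+y^2)/z$ renders $g$ concave, so that by the extreme-point principle for concave minimization it suffices to evaluate $g$ at the four vertices of the feasible region, and that the constants $\bm c_{11}, \ldots, \bm c_{15}$ were engineered precisely so $g$ vanishes at all four. For the theorem itself the only step requiring care is the containment $\mathcal S_p \subseteq$ (feasible region of \eqref{NLP}); once the validity of the Extreme cut \eqref{eq:3d_cut} is invoked, the result follows as a direct corollary of Proposition \ref{th:zero}.
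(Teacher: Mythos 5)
Your proposal is correct and follows essentially the same argument as the paper: both reduce the theorem to Proposition \ref{th:zero} via a set-containment chain, the paper going $\mathcal S_p \subset \mathcal S_c \subset$ (feasible region of \eqref{NLP}) while you collapse the intermediate set $\mathcal S_c$ and verify the containment $\mathcal S_p \subseteq$ (feasible region of \eqref{NLP}) directly, invoking the validity of the Extreme cut \eqref{eq:3d_cut} in the same way. Your explicit check that the three constraint families of \eqref{NLP} hold on $\mathcal S_p$, and the remark that $w_i \geq (\bm{v^l}_i)^2 > 0$ keeps the ratio well defined, are slightly more careful than the paper's one-line version but do not constitute a different method.
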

\begin{proof}
\noindent
Since the feasibility space of \eqref{NLP} is a relaxation of $\mathcal S_c$, Proposition \ref{th:zero} implies that $$g(w^R_{ij}, w^I_{ij},w_{i}) \geq 0 ,\forall (w^R_{ij}, w^I_{ij},w_{i}) \in \mathcal S_c,$$ thus constraint  \eqref{eq:3d_cut_1} is redundant for $\mathcal S_c$ and consequently for the restricted set $\mathcal S_p$.
\end{proof}

\paragraph{The VLB Nonlinear Cut}
For clarity we begin by defining the following constants,
\begin{subequations}
\begin{align}
& \bm c_{21} = \bm {v^\sigma}_i\bm {v^\sigma}_j \cos(\bm \phi_{ij}) \nonumber \\
& \bm c_{22} = \bm {v^\sigma}_i\bm {v^\sigma}_j \sin(\bm \phi_{ij}) \nonumber \\
& \bm c_{23} = - \bm {v^l}_j \cos(\bm \delta_{ij})\bm {v^\sigma}_j \nonumber \\
& \bm c_{24} = - \bm {v^l}_i \cos(\bm \delta_{ij})\bm {v^\sigma}_i  \nonumber \\
& \bm c_{25} = \bm {v^l}_i\bm {v^l}_j \cos(\bm \delta_{ij})(\bm {v^l}_i\bm {v^l}_j - \bm {v^u}_i\bm {v^u}_j) \nonumber 
\end{align}
\end{subequations}

\begin{theorem}
In the $(w^R_{ij}, w^I_{ij},w_{i})$ space, the following nonlinear cut is redundant with respect to $\mathcal S_p$.
\begin{subequations}
\begin{align}
& \bm c_{21} w^R_{ij} + \bm c_{22} w^I_{ij} + \bm c_{23} w_i + \bm c_{24} \frac{(w^R_{ij})^2 + (w^I_{ij})^2}{w_i} + \bm c_{25} \geq 0 \label{eq:3d_cut_2}
\end{align}
\end{subequations}
\end{theorem}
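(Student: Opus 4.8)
The plan is to reproduce the two-step concave-minimization argument used for the VUB cut (Proposition \ref{th:zero} followed by Theorem \ref{thrm:3d_cut_1}), since the VLB cut has the same structure with the lower voltage bounds $\bm{v^l}_i,\bm{v^l}_j$ taking the role played by the upper bounds in the VUB coefficients. Writing $g_2(w^R_{ij},w^I_{ij},w_i)$ for the left-hand side of \eqref{eq:3d_cut_2}, I would minimize $g_2$ over the feasible region of \eqref{NLP}, i.e. the set cut out by the bounds on $w_i$, the PAD constraints \eqref{eq:svfs_3}, and the Extreme cut \eqref{eq:3d_cut}. As already observed for the VUB cut, this region is a relaxation of $\mathcal S_c$, which in turn contains $\mathcal S_p$; hence establishing $g_2\ge 0$ on it immediately yields redundancy of \eqref{eq:3d_cut_2} with respect to $\mathcal S_p$.

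Next I would check that this is again a concave program, so that the same machinery applies. The term $((w^R_{ij})^2+(w^I_{ij})^2)/w_i$ is convex for $w_i>0$ by \cite{QCarchive}, and since $\bm c_{24}=-\bm{v^l}_i\cos(\bm\delta_{ij})\bm{v^\sigma}_i<0$, the function $g_2$ is concave. By \cite{Benson95} its minimum over the bounded feasible region is then attained at an extreme point, and this region has exactly the four extreme points already identified in Proposition \ref{th:zero}, each lying where $((w^R_{ij})^2+(w^I_{ij})^2)/w_i=(\bm{v^l}_j)^2$.

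The crux is the evaluation of $g_2$ at those four points. At each of them the bilinear part $\bm c_{21}w^R_{ij}+\bm c_{22}w^I_{ij}$ collapses through the identity $\cos(\bm\phi_{ij})\cos(\bm\phi_{ij}\mp\bm\delta_{ij})+\sin(\bm\phi_{ij})\sin(\bm\phi_{ij}\mp\bm\delta_{ij})=\cos(\bm\delta_{ij})$, and after substituting $\bm{v^\sigma}_i=\bm{v^l}_i+\bm{v^u}_i$ and $\bm{v^\sigma}_j=\bm{v^l}_j+\bm{v^u}_j$ and expanding, every monomial cancels, so that $g_2=0$ at all four extreme points. This is the one outcome that genuinely differs from the VUB case: there the identical computation leaves the strictly positive residual $\cos(\bm\delta_{ij})((\bm{v^l}_i)^2-(\bm{v^u}_i)^2)((\bm{v^l}_j)^2-(\bm{v^u}_j)^2)$, whereas the VLB coefficients are arranged so that the cut is tight there.

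Since $g_2$ is concave and vanishes at every extreme point of a bounded polyhedral region, it is nonnegative on the whole region: any feasible point is a convex combination of the vertices, and concavity gives $g_2\ge 0$. Hence $g_2\ge 0$ on $\mathcal S_c$ and a fortiori on $\mathcal S_p$, which establishes the theorem. I expect the only real obstacle to be the extreme-point bookkeeping, namely making sure that all four combinations of the two angle choices $\bm\phi_{ij}\pm\bm\delta_{ij}$ with the two bounds on $w_i$ telescope through the same identity, and confirming that the feasible region is bounded so that the extreme-point characterization of \cite{Benson95} is in force.
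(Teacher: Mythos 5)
Your proposal is correct and takes exactly the route the paper intends: the paper's proof of this theorem is literally ``adapt the proof of Theorem \ref{thrm:3d_cut_1} to the new parameters,'' i.e., the same concave program over the region defined by the $w_i$ bounds, the PAD constraints \eqref{eq:svfs_3}, and the Extreme cut \eqref{eq:3d_cut}, with the minimum attained at the four extreme points via \cite{Benson95}, and your verification that the VLB expression vanishes at all four points (via $\cos(\bm \phi_{ij})\cos(\bm \phi_{ij} \mp \bm \delta_{ij}) + \sin(\bm \phi_{ij})\sin(\bm \phi_{ij} \mp \bm \delta_{ij}) = \cos(\bm \delta_{ij})$ and expansion of $\bm {v^\sigma}_i, \bm {v^\sigma}_j$) checks out. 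One small slip in your side remark: in the VUB case the residual $\cos(\bm \delta_{ij})\left((\bm {v^l}_i)^2 - (\bm {v^u}_i)^2\right)\left((\bm {v^l}_j)^2 - (\bm {v^u}_j)^2\right)$ is strictly positive only at the two extreme points with $w_i = (\bm {v^l}_i)^2$; at the two points with $w_i = (\bm {v^u}_i)^2$ the VUB cut is also tight, though this does not affect the argument for the theorem at hand.
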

\begin{proof}
The proof of Theorem \ref{thrm:3d_cut_1} can be adapted to fit the new parameters introduced here.
\end{proof}

\begin{corollary}
Constraints \eqref{eq:3d_cut_1} and \eqref{eq:3d_cut_2} are valid nonlinear inequalities in any power flow model or power flow relaxation.  
\end{corollary}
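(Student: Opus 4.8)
The plan is to reduce the statement to the two preceding theorems by showing that the feasible region of any AC power flow model, projected onto the $(w^R_{ij}, w^I_{ij}, w_i)$ space, is contained in $\mathcal{S}_p$. First I would observe that every AC-feasible point satisfies the fundamental voltage-product relation \eqref{w_v_eq}, so that the identifications $w_i = |V_i|^2$, $w_j = |V_j|^2$, $w^R_{ij} = \Re(W_{ij})$, and $w^I_{ij} = \Im(W_{ij})$ inherit all of the defining constraints of Model \ref{model:svfs}: the magnitude bounds \eqref{eq:svfs_1}--\eqref{eq:svfs_2} come from the bus voltage limits \eqref{eq:v_mag_sqr}; the box bounds \eqref{eq:wr_b}--\eqref{eq:wi_b} are precisely the valid bounds of Lemma \ref{lemma:w_egde_vars_bounds}; the linear relation \eqref{eq:svfs_3} is the PAD constraint \eqref{w_9}; and the exact quadratic equality \eqref{eq:svfs_4} is the identity $|W_{ij}|^2 = W_{ii}W_{jj}$ derived in \eqref{w_soc_1}--\eqref{w_soc_3}. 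Consequently such a point is feasible for Model \ref{model:svfs}, and (after eliminating $w_j$ through \eqref{eq:svfs_4}) its projection lands in $\mathcal{S}_p$.

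I would then invoke the two theorems establishing that \eqref{eq:3d_cut_1} and \eqref{eq:3d_cut_2} are redundant with respect to $\mathcal{S}_p$, i.e., hold at every point of that set. The division by $w_i$ is well defined since $w_i = |V_i|^2 \geq (\bm {v^l}_i)^2 > 0$, and on any feasible point \eqref{eq:svfs_4} identifies $((w^R_{ij})^2 + (w^I_{ij})^2)/w_i$ with $w_j$, so the three-dimensional cut lifts unambiguously to the full variable space. This proves that every point feasible in the original non-convex model satisfies both cuts. Validity in any relaxation then follows immediately from the definition of a valid inequality: since the cuts are satisfied by all originally feasible points, appending them to any relaxation removes no AC-feasible solution and merely tightens the relaxation by discarding relaxation-feasible points that are infeasible in the exact model.

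The step I expect to be most delicate is the bookkeeping of the first paragraph rather than any deep difficulty: one must check that each constraint of Model \ref{model:svfs} is genuinely a consequence of the generic relation \eqref{w_v_eq} together with bounds available in every instance. In particular, the box bounds supplied by Lemma \ref{lemma:w_egde_vars_bounds} presuppose the phase angle difference range $[-\bm \pi/2, \bm \pi/2]$, so I would note that this standing assumption of the paper makes those bounds universally valid, ensuring that the projection into $\mathcal{S}_p$ is watertight and that the two theorems apply verbatim on every edge $(i,j) \in E$.
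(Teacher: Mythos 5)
Your proposal is correct and follows exactly the route the paper intends: the corollary is stated without proof because it is an immediate consequence of the two preceding theorems, the point being that every constraint of Model \ref{model:svfs} is a valid consequence of \eqref{w_v_eq} together with universally available bounds, so every AC-feasible point projects into $\mathcal{S}_p$ and hence satisfies the cuts. Your careful bookkeeping (including the observations that $w_i \geq (\bm{v^l}_i)^2 > 0$ makes the division well-defined and that the standing assumption $-\bm\pi/2 \leq \bm{\theta^l}_{ij} \leq \bm{\theta^u}_{ij} \leq \bm\pi/2$ underwrites the bounds of Lemma \ref{lemma:w_egde_vars_bounds}) simply makes explicit what the paper leaves implicit.
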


\subsubsection{Application of the Valid Inequalities}

The usefulness of the nonlinear cuts \eqref{eq:3d_cut_1} and
\eqref{eq:3d_cut_2} is not immediately clear.  Indeed, the Extreme cut
\eqref{eq:3d_cut} appears to provide the tightest convex relaxation of
the three-dimensional non-convex set defined in Model
\ref{model:svfs}.  However, it is important to point out that as soon
as we relax the quadratic equation \eqref{eq:svfs_4}, we lift the
feasible region into four dimensions, that is $(w^R_{ij},
w^I_{ij},w_{i},w_{j}) \in \rit^4$.  The key insight is that although
\eqref{eq:3d_cut_1} and \eqref{eq:3d_cut_2} are redundant in the
three-dimensional space, they are not redundant in the lifted $\rit^4$
space.  This property was observed in \cite{6822653}, where a
collection of line flow constraints, which are equivalent in the
non-convex space, were shown to have different strengths in the lifted
convex relaxation space.  Utilizing the equivalence $((w^R_{ij})^2 +
(w^I_{ij})^2) / w_i = w_j$, we can lift \eqref{eq:3d_cut_1} and
\eqref{eq:3d_cut_2} into the standard $\rit^4$ power flow relaxation
space as follows,
\begin{subequations}
\begin{align}
& \bm {v^\sigma}_i\bm {v^\sigma}_j(w^R_{ij}\cos(\bm \phi_{ij}) + w^I_{ij}\sin(\bm \phi_{ij})) - \bm {v^u}_j \cos(\bm \delta_{ij})\bm {v^\sigma}_jw_i - \bm {v^u}_i \cos(\bm \delta_{ij})\bm {v^\sigma}_iw_j \geq  \bm {v^u}_i\bm {v^u}_j \cos(\bm \delta_{ij})(\bm {v^l}_i\bm {v^l}_j - \bm {v^u}_i\bm {v^u}_j) \label{eq:4d_cut_1} \\
& \bm {v^\sigma}_i\bm {v^\sigma}_j(w^R_{ij}\cos(\bm \phi_{ij}) + w^I_{ij}\sin(\bm \phi_{ij})) - \bm {v^l}_j \cos(\bm \delta_{ij})\bm {v^\sigma}_jw_i - \bm {v^l}_i \cos(\bm \delta_{ij})\bm {v^\sigma}_iw_j \geq -\bm {v^l}_i\bm {v^l}_j \cos(\bm \delta_{ij})(\bm {v^l}_i\bm {v^l}_j - \bm {v^u}_i\bm {v^u}_j) \label{eq:4d_cut_2} 
\end{align}
\end{subequations}
We refer to these constraints as {\em lifted nonlinear cuts} (LNC).  
Noting that these constraints are linear in the $\rit^4$ space, they can be easily integrated into any of the models discussed in Section \ref{sec:relaxations}.
\begin{proposition}
The  LNC cuts dominate the Extreme cuts in the $(w^R_{ij}, w^I_{ij},w_{i},w_{j})$ space.
\end{proposition}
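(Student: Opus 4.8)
The plan is to read ``dominate'' as inclusion in the common lifted space: placing both Extreme cuts \eqref{eq:3d_cut} and \eqref{eq:3d_cut2} and both LNC cuts \eqref{eq:4d_cut_1}--\eqref{eq:4d_cut_2} in $(w^R_{ij},w^I_{ij},w_{i},w_{j})\in\rit^4$, I would show that every point satisfying the LNC cuts together with the box bounds \eqref{eq:svfs_1}--\eqref{eq:svfs_2} also satisfies each Extreme cut. Because the Extreme cuts omit the fourth coordinate (\eqref{eq:3d_cut} has no $w_j$, \eqref{eq:3d_cut2} has no $w_i$), the comparison reduces to an algebraic identity once everything is written with the common constants $\bm \phi_{ij},\bm \delta_{ij},\bm {v^\sigma}_i,\bm {v^\sigma}_j$.

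The crux is an exact certificate relating the VLB cut \eqref{eq:4d_cut_2} to each Extreme cut. First I would put \eqref{eq:3d_cut} in ``$\ge 0$'' form and scale it by $\bm {v^\sigma}_j>0$, and likewise rearrange \eqref{eq:4d_cut_2} to $L_{\mathrm{VLB}}\ge 0$. The payoff of the $\phi,\delta,v^\sigma$ representation is that after this scaling the $w^R_{ij}$ and $w^I_{ij}$ coefficients of the two inequalities coincide (they equal $\bm {v^\sigma}_i\bm {v^\sigma}_j\cos(\bm \phi_{ij})$ and $\bm {v^\sigma}_i\bm {v^\sigma}_j\sin(\bm \phi_{ij})$) and so does the $w_i$ coefficient ($-\bm {v^l}_j\bm {v^\sigma}_j\cos(\bm \delta_{ij})$). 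Subtracting, the whole part linear in $(w^R_{ij},w^I_{ij},w_i)$ cancels, and after factoring $\cos(\bm \delta_{ij})\bm {v^l}_i$ and simplifying the constant the remainder collapses to
\[
L_{\mathrm{VLB}}-\bm {v^\sigma}_j\,L_{\mathrm{E}}=\cos(\bm \delta_{ij})\,\bm {v^l}_i\,\bm {v^\sigma}_i\big((\bm {v^l}_j)^2-w_j\big)\le 0,
\]
where $L_{\mathrm{E}}$ is the scaled Extreme left-hand side and nonpositivity uses only $w_j\ge(\bm {v^l}_j)^2$ from \eqref{eq:svfs_2} together with $\cos(\bm \delta_{ij})>0$. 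Hence $L_{\mathrm{VLB}}\ge 0$ forces $L_{\mathrm{E}}\ge 0$, i.e. \eqref{eq:3d_cut}. An entirely symmetric computation, scaling \eqref{eq:3d_cut2} by $\bm {v^\sigma}_i$ and invoking $w_i\ge(\bm {v^l}_i)^2$ from \eqref{eq:svfs_1}, gives the analogous remainder $\cos(\bm \delta_{ij})\bm {v^l}_j\bm {v^\sigma}_j((\bm {v^l}_i)^2-w_i)\le 0$, so \eqref{eq:4d_cut_2} also implies \eqref{eq:3d_cut2}. Thus the VLB cut alone already pins down both Extreme cuts, and a fortiori the full LNC system is contained in the Extreme system.

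I expect the main obstacle to be identifying the right certificate rather than the computation itself. A natural first attempt---feeding the cone relaxation \eqref{soc_1} (that is, $w_j\ge((w^R_{ij})^2+(w^I_{ij})^2)/w_i$) into the VUB cut \eqref{eq:4d_cut_1}---only reproduces the weaker three-dimensional cut \eqref{eq:3d_cut_1}, and in fact \eqref{eq:4d_cut_1} alone does \emph{not} imply \eqref{eq:3d_cut}: evaluating the analogous difference at the corner $w_i=(\bm {v^l}_i)^2,\,w_j=(\bm {v^l}_j)^2$ leaves $\cos(\bm \delta_{ij})\big((\bm {v^u}_i)^2-(\bm {v^l}_i)^2\big)\big((\bm {v^u}_j)^2-(\bm {v^l}_j)^2\big)\ge 0$, the wrong sign. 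The key realization is that it is the \emph{lower}-bound cut \eqref{eq:4d_cut_2}, whose linear part already matches the Extreme cuts in the $\phi,\delta,v^\sigma$ coordinates, that supplies the domination, with the box lower bounds as the only auxiliary inequalities; getting the constant terms to collapse into the clean factored form above is the one place demanding care with the defining expressions of the LNC coefficients. Finally, since \eqref{eq:4d_cut_1} is an extra constraint not implied by the Extreme cuts, a point satisfying both Extreme cuts but violating \eqref{eq:4d_cut_1} witnesses that the inclusion is strict.
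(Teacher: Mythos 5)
Your proof is correct, and it rests on the same mechanism as the paper's own one-line argument: substitute the lower bound of $w_j$ (resp.\ $w_i$) into a lifted cut, and use the negativity of the $w_i,w_j$ coefficients to conclude that the substituted inequality is implied. But your routing differs in a substantive---and in fact corrective---way. The paper pairs the cuts as: \eqref{eq:4d_cut_2} with $w_j=(\bm {v^l}_j)^2$ yields \eqref{eq:3d_cut}, and \eqref{eq:4d_cut_1} with $w_i=(\bm {v^l}_i)^2$ yields \eqref{eq:3d_cut2}. The first pairing checks out, and your remainder identity
\begin{equation*}
L_{\mathrm{VLB}}-\bm {v^\sigma}_j\,L_{\mathrm{E}}=\cos(\bm \delta_{ij})\,\bm {v^l}_i\,\bm {v^\sigma}_i\bigl((\bm {v^l}_j)^2-w_j\bigr)
\end{equation*}
is exactly the certificate behind it. The second pairing, however, is a misprint in the paper: setting $w_i=(\bm {v^l}_i)^2$ in \eqref{eq:4d_cut_1} and dividing by $\bm {v^\sigma}_i$ produces a cut whose $w_j$-coefficient is $-\bm {v^u}_i\cos(\bm \delta_{ij})$ rather than $-\bm {v^l}_i\cos(\bm \delta_{ij})$; the resulting inequality differs from \eqref{eq:3d_cut2} by the nonnegative slack $\cos(\bm \delta_{ij})(\bm {v^u}_i-\bm {v^l}_i)\bigl((\bm {v^u}_j)^2-w_j\bigr)$ and is therefore \emph{implied by} \eqref{eq:3d_cut2}, so it cannot certify domination. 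Your derivation of \emph{both} Extreme cuts from the single VLB cut \eqref{eq:4d_cut_2}---via the two symmetric lower-bound substitutions, with \eqref{eq:svfs_1}--\eqref{eq:svfs_2} as the only auxiliary inequalities---is the correct repair, and your corner evaluation showing that \eqref{eq:4d_cut_1} alone leaves the wrong-signed residual $\cos(\bm \delta_{ij})\bigl((\bm {v^u}_i)^2-(\bm {v^l}_i)^2\bigr)\bigl((\bm {v^u}_j)^2-(\bm {v^l}_j)^2\bigr)$ confirms that the lower-bound cut is indispensable here. One small caveat: your closing strictness claim (a point satisfying both Extreme cuts but violating \eqref{eq:4d_cut_1}) is asserted without an explicit witness; since the proposition asserts only dominance, nothing in the statement hinges on it, but as written it is a gesture rather than a proof.
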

\begin{proof}
Observe that replacing $w_j$ (resp. $w_i$) by its lower bound in \eqref{eq:4d_cut_2} (resp. \eqref{eq:4d_cut_1}) leads to \eqref{eq:3d_cut} (resp. \eqref{eq:3d_cut2}). Given that the coefficients corresponding to $w_i$ and $w_j$ are both negative in \eqref{eq:4d_cut_1} and \eqref{eq:4d_cut_2}, dominance is guaranteed. 
\end{proof}

\subsubsection{Connections to Previous Work}

To the best of our knowledge, two previous work in the power systems community \cite{6822653,7056568}
have explored similar ideas for strengthening the SDP relaxation.  Two
interesting observations were made in \cite{6822653}: (1) when the
voltage magnitudes at both sides of the line are fixed, the maximum
phase difference $\bm {\theta^m}_{ij}$ can be used to encode a variety
of equivalent line capacity constraints; (2) from these equivalent
flow limit constraints, the current limit constraint was observed to
be the most advantageous for the SDP relaxation.  Specifically, in the
notation of this paper, \cite{6822653} concludes that for the
intervals $w_i = 1, w_j = 1, \bm {\theta^l}_{ij} = -\bm
{\theta^m}_{ij}, \bm {\theta^u}_{ij} = \bm {\theta^m}_{ij}$, the
strongest line flow constraint in the SDP relaxation is $w_i + w_j -
2w^R_{ij} \leq 2(1 - \cos(\bm {\theta^m}_{ij}))$.  Knowing that the
values of $w_i,w_j$ are fixed, this constraint reduces to:
\begin{align}
w^R_{ij} \geq \cos(\bm {\theta^m}_{ij})
\end{align}
Now let us apply the same special case to the lifted nonlinear cuts developed here.  The constants for this special case are $\bm \phi_{ij} = 0;\; \bm \delta_{ij} = \bm {\theta^m}_{ij};\; \bm {v^\sigma}_i, \bm {v^\sigma}_j = 2;\; \bm {v^l}_i,\bm {v^u}_i,\bm {v^l}_j,\bm {v^u}_i = 1$ and the application to  \eqref{eq:4d_cut_1} is as follows:\footnote{In this particular case, \eqref{eq:4d_cut_2} yields an identical result.}
\begin{subequations}
\begin{align}
\bm {v^\sigma}_i\bm {v^\sigma}_j(w^R_{ij}\cos(\bm \phi_{ij}) + w^I_{ij}\sin(\bm \phi_{ij})) - \bm {v^u}_j \cos(\bm \delta_{ij})\bm {v^\sigma}_jw_i - \bm {v^u}_i \cos(\bm \delta_{ij})\bm {v^\sigma}_iw_j &\geq  \bm {v^u}_i\bm {v^u}_j \cos(\bm \delta_{ij})(\bm {v^l}_i\bm {v^l}_j - \bm {v^u}_i\bm {v^u}_j) \\
4w^R_{ij} - \cos(\bm {\theta^m}_{ij})2w_i - \cos(\bm {\theta^m}_{ij})2w_j &\geq 0 \\
2w^R_{ij} - \cos(\bm {\theta^m}_{ij}) (w_i + w_j) &\geq 0 \\
w^R_{ij} &\geq \cos(\bm {\theta^m}_{ij})
\end{align}
\end{subequations}
This reduction shows that the lifted nonlinear cuts proposed here are a generalization dominating the current limit constraint proposed in \cite{6822653}.

In an entirely different approach, valid cuts based on the bounds of
$w^R$ and $w^I$ were proposed in \cite{7056568}.  These cuts have a
key advantage over the line limit constraints considered in
\cite{6822653} in that they can capture the structure of asymmetrical
bounds on $\bm {\theta^l}, \bm {\theta^u}$.  For example, 
consider the case where $0 \leq \bm {\theta^l}_{ij} < \bm
{\theta^u}_{ij} \leq \bm \pi / 2$.  In this case, \cite{7056568} proposes the following cut,
\begin{subequations}
\begin{align}
w^R_{ij}\cos(\bm \phi_{ij}) + w^I_{ij}\sin(\bm \phi_{ij}) \geq \bm {v^l}_i\bm {v^l}_j \cos(\bm \delta_{ij}) \label{eq:w_bound_cut}
\end{align}
\end{subequations}
A derivation of this cut from the algorithm provided in \cite{7056568} can be found in Appendix \ref{sec:w_bound_cut}.  
\begin{proposition}
The new nonlinear lifted cuts \eqref{eq:4d_cut_2} dominate constraints \eqref{eq:w_bound_cut}.
\end{proposition}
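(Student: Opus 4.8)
The plan is to establish domination by showing that every point satisfying the lifted cut \eqref{eq:4d_cut_2}, together with the voltage-magnitude lower bounds $w_i \ge (\bm {v^l}_i)^2$ and $w_j \ge (\bm {v^l}_j)^2$ from \eqref{eq:svfs_1}--\eqref{eq:svfs_2}, necessarily satisfies \eqref{eq:w_bound_cut} (in the case $0 \le \bm {\theta^l}_{ij} < \bm {\theta^u}_{ij} \le \bm \pi/2$ for which \eqref{eq:w_bound_cut} is stated). For brevity write $A = w^R_{ij}\cos(\bm \phi_{ij}) + w^I_{ij}\sin(\bm \phi_{ij})$, so that \eqref{eq:w_bound_cut} reads $A \ge \bm {v^l}_i\bm {v^l}_j\cos(\bm \delta_{ij})$ and \eqref{eq:4d_cut_2} is a lower bound on $\bm {v^\sigma}_i\bm {v^\sigma}_j A$.

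First I would isolate $A$ in \eqref{eq:4d_cut_2}: moving the $w_i$ and $w_j$ terms to the right and dividing by $\bm {v^\sigma}_i\bm {v^\sigma}_j > 0$ yields
\[ A \ge \cos(\bm \delta_{ij})\left( \frac{\bm {v^l}_j}{\bm {v^\sigma}_i} w_i + \frac{\bm {v^l}_i}{\bm {v^\sigma}_j} w_j - \frac{\bm {v^l}_i\bm {v^l}_j(\bm {v^l}_i\bm {v^l}_j - \bm {v^u}_i\bm {v^u}_j)}{\bm {v^\sigma}_i\bm {v^\sigma}_j} \right), \]
where a common factor of $\cos(\bm \delta_{ij})$ has been pulled out of all three remaining terms. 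Since all voltage magnitudes are nonnegative, the coefficients $\bm {v^l}_j/\bm {v^\sigma}_i$ and $\bm {v^l}_i/\bm {v^\sigma}_j$ of $w_i$ and $w_j$ are nonnegative, and $\cos(\bm \delta_{ij}) \ge 0$ because $\bm \delta_{ij}\in[0,\bm \pi/2]$. Hence replacing $w_i$ and $w_j$ by their lower bounds $(\bm {v^l}_i)^2$ and $(\bm {v^l}_j)^2$ can only decrease the right-hand side, producing a valid (weaker) lower bound on $A$.

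The crux is then a purely algebraic identity. After the substitution $w_i = (\bm {v^l}_i)^2$, $w_j = (\bm {v^l}_j)^2$ I would factor $\bm {v^l}_i\bm {v^l}_j$ out of the parenthesized expression and show that the remaining bracket collapses to exactly $1$. Concretely, expanding $\bm {v^\sigma}_i = \bm {v^l}_i + \bm {v^u}_i$ and $\bm {v^\sigma}_j = \bm {v^l}_j + \bm {v^u}_j$ over the common denominator $\bm {v^\sigma}_i\bm {v^\sigma}_j$, the numerator $\bm {v^l}_i\bm {v^\sigma}_j + \bm {v^l}_j\bm {v^\sigma}_i - (\bm {v^l}_i\bm {v^l}_j - \bm {v^u}_i\bm {v^u}_j)$ equals $\bm {v^l}_i\bm {v^l}_j + \bm {v^l}_i\bm {v^u}_j + \bm {v^u}_i\bm {v^l}_j + \bm {v^u}_i\bm {v^u}_j$, which is precisely the expansion of the denominator $\bm {v^\sigma}_i\bm {v^\sigma}_j$. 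Therefore the lower bound evaluates to exactly $\bm {v^l}_i\bm {v^l}_j\cos(\bm \delta_{ij})$, and chaining the inequalities gives $A \ge \bm {v^l}_i\bm {v^l}_j\cos(\bm \delta_{ij})$, i.e.\ \eqref{eq:w_bound_cut}.

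I expect the only delicate points to be bookkeeping rather than conceptual: keeping track of the signs so that substituting the lower bounds weakens the inequality in the correct direction, and verifying the numerator-equals-denominator cancellation cleanly, which is what makes \eqref{eq:w_bound_cut} the tight face of \eqref{eq:4d_cut_2} attained at the minimal voltage magnitudes. It is worth noting that the inequality holds with equality at $w_i = (\bm {v^l}_i)^2$, $w_j = (\bm {v^l}_j)^2$, so \eqref{eq:4d_cut_2} coincides with \eqref{eq:w_bound_cut} on that boundary and is strictly stronger elsewhere, which is exactly what the claimed domination asserts.
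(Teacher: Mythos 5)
Your proof is correct and follows essentially the same route as the paper's: you substitute the voltage lower bounds $w_i = (\bm{v^l}_i)^2$, $w_j = (\bm{v^l}_j)^2$ into \eqref{eq:4d_cut_2} and verify the same algebraic identity (your numerator-equals-denominator cancellation is the paper's observed property $\bm{v^l}_j \bm{v^\sigma}_j (\bm{v^l}_i)^2 + \bm{v^l}_i \bm{v^\sigma}_i (\bm{v^l}_j)^2 - \bm{v^l}_i\bm{v^l}_j(\bm{v^l}_i\bm{v^l}_j - \bm{v^u}_i\bm{v^u}_j) = \bm{v^\sigma}_i \bm{v^\sigma}_j \bm{v^l}_i \bm{v^l}_j$, divided through by $\bm{v^l}_i\bm{v^l}_j\,\bm{v^\sigma}_i\bm{v^\sigma}_j$), recovering \eqref{eq:w_bound_cut} exactly. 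If anything, you are slightly more rigorous than the paper, since you make explicit the monotonicity step --- that the nonpositive coefficients of $w_i, w_j$ in \eqref{eq:4d_cut_2} mean substituting lower bounds only weakens the cut --- which the paper leaves implicit (relying on the analogous sign argument stated in its earlier dominance proposition for the Extreme cuts).
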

\begin{proof}
To support the proof, we first observe the following property,
\begin{subequations}
\begin{align}
& \bm {v^l}_j \bm {v^\sigma}_j (\bm {v^l}_i)^2 +  \bm {v^l}_i \bm {v^\sigma}_i (\bm {v^l}_j)^2 - \bm {v^l}_i\bm {v^l}_j (\bm {v^l}_i\bm {v^l}_j - \bm {v^u}_i\bm {v^u}_j) = \\
& (\bm {v^l}_j)^2 (\bm {v^l}_i)^2 + \bm {v^l}_j \bm {v^u}_j (\bm {v^l}_i)^2 +  (\bm {v^l}_i)^2 (\bm {v^l}_j)^2 +  \bm {v^l}_i \bm {v^u}_i (\bm {v^l}_j)^2  - (\bm {v^l}_i\bm {v^l}_j)^2  + \bm {v^l}_i\bm {v^l}_j \bm {v^u}_i\bm {v^u}_j = \\
& (\bm {v^l}_i \bm {v^l}_j + \bm {v^l}_i \bm {v^u}_j + \bm {v^u}_i \bm {v^l}_j + \bm {v^u}_i\bm {v^u}_j) \bm {v^l}_i \bm {v^l}_j = \\
& (\bm {v^l}_i + \bm {v^u}_i)(\bm {v^l}_j + \bm {v^u}_j) \bm {v^l}_i \bm {v^l}_j  =  \\
& \bm {v^\sigma}_i \bm {v^\sigma}_j \bm {v^l}_i \bm {v^l}_j 
\end{align}
\end{subequations}
Now assume $w_i = (\bm {v^l}_i)^2, w_j = (\bm {v^l}_j)^2$ and apply
\eqref{eq:4d_cut_2} as follows,
\begin{subequations}
\begin{align}
\bm {v^\sigma}_i\bm {v^\sigma}_j(w^R_{ij}\cos(\bm \phi_{ij}) + w^I_{ij}\sin(\bm \phi_{ij})) - \bm {v^l}_j \cos(\bm \delta_{ij})\bm {v^\sigma}_jw_i - \bm {v^l}_i \cos(\bm \delta_{ij})\bm {v^\sigma}_iw_j & \geq -\bm {v^l}_i\bm {v^l}_j \cos(\bm \delta_{ij})(\bm {v^l}_i\bm {v^l}_j - \bm {v^u}_i\bm {v^u}_j) \\
\bm {v^\sigma}_i\bm {v^\sigma}_j(w^R_{ij}\cos(\bm \phi_{ij}) + w^I_{ij}\sin(\bm \phi_{ij})) - \bm {v^l}_j \cos(\bm \delta_{ij})\bm {v^\sigma}_j (\bm {v^l}_i)^2 - \bm {v^l}_i \cos(\bm \delta_{ij})\bm {v^\sigma}_i (\bm {v^l}_j)^2  & \geq -\bm {v^l}_i\bm {v^l}_j \cos(\bm \delta_{ij})(\bm {v^l}_i\bm {v^l}_j - \bm {v^u}_i\bm {v^u}_j) \\
\bm {v^\sigma}_i\bm {v^\sigma}_j(w^R_{ij}\cos(\bm \phi_{ij}) + w^I_{ij}\sin(\bm \phi_{ij})) & \geq  \bm {v^\sigma}_i \bm {v^\sigma}_j \bm {v^l}_i \bm {v^l}_j \cos(\bm \delta_{ij}) \\
w^R_{ij}\cos(\bm \phi_{ij}) + w^I_{ij}\sin(\bm \phi_{ij}) & \geq \bm {v^l}_i \bm {v^l}_j \cos(\bm \delta_{ij})
\end{align}
\end{subequations}
A similar analysis can be done to confirm that \eqref{eq:w_bound_cut} is a weaker version of the extreme cut \eqref{eq:3d_cut}.
It is now clear that the cut proposed in \cite{7056568} is a special case of the cuts proposed here, where the voltage variables are assigned to their lower bounds.
\end{proof}

In a very recent and independent line of work, coming out of the mathematical programming community, \cite{chen_qcqp} considers a model similar to Model \ref{model:svfs}.
The key difference being in the parameterization the variable bounds and the coefficients of \eqref{eq:svfs_3}.
Using a representation where $\tan(\bm {\theta^l}_{ij}) = \bm {t^l}_{ij}, \tan(\bm {\theta^u}_{ij}) = \bm {t^u}_{ij}, (\bm {v^l}_{i})^2 = \bm {w^l}_{i}$, and so on, \cite{chen_qcqp} proposes the following constants,\footnote{This presentation ignores the special cases where $\bm {t^l}_{ij} = 0$ or $\bm {t^u}_{ij} = 0$.}
\begin{subequations}
\begin{align}
& \bm \pi_{0} = -\sqrt{\bm {w^l}_{i} \bm {w^l}_{j} \bm {w^u}_{i} \bm {w^u}_{j}} \label{eq:cc_const_1} \\
& \bm \pi_{1} = -\sqrt{\bm {w^l}_{j} \bm {w^u}_{j}} \\
& \bm \pi_{2} = -\sqrt{\bm {w^l}_{i} \bm {w^u}_{i}} \\
& \bm \pi_{3} = \left( \sqrt{\bm {w^l}_{i}} + \sqrt{\bm {w^u}_{i}} \right) \left( \sqrt{\bm {w^l}_{j}} + \sqrt{\bm {w^u}_{j}} \right) \frac{1 - \left( \frac{\sqrt{1+(\bm {t^l}_{ij})^2} -1}{\bm {t^l}_{ij}} \right) \left( \frac{\sqrt{1+(\bm {t^u}_{ij})^2} -1}{\bm {t^u}_{ij}} \right)}{1 + \left( \frac{\sqrt{1+(\bm {t^l}_{ij})^2} -1}{\bm {t^l}_{ij}} \right) \left( \frac{\sqrt{1+(\bm {t^u}_{ij})^2} -1}{\bm {t^u}_{ij}} \right)} \\
& \bm \pi_{4} = \left( \sqrt{\bm {w^l}_{i}} + \sqrt{\bm {w^u}_{i}} \right) \left( \sqrt{\bm {w^l}_{j}} + \sqrt{\bm {w^u}_{j}} \right) \frac{\left( \frac{\sqrt{1+(\bm {t^l}_{ij})^2} -1}{\bm {t^l}_{ij}} \right) + \left( \frac{\sqrt{1+(\bm {t^u}_{ij})^2} -1}{\bm {t^u}_{ij}} \right)}{1 + \left( \frac{\sqrt{1+(\bm {t^l}_{ij})^2} -1}{\bm {t^l}_{ij}} \right) \left( \frac{\sqrt{1+(\bm {t^u}_{ij})^2} -1}{\bm {t^u}_{ij}} \right)} \label{eq:cc_const_5}
\end{align}
\end{subequations}
%
and then develops the following valid inequalities,
\begin{subequations}
\begin{align}
& \bm \pi_{0} + \bm \pi_{1} w_i + \bm \pi_{2} w_j + \bm \pi_{3} w^R_{ij} + \bm \pi_{4} w^I_{ij} \geq \bm {w^u}_{j} w_i + \bm {w^u}_{i} w_j - \bm {w^u}_{i} \bm {w^u}_{j} \label{eq:cc_cut_1} \\
& \bm \pi_{0} + \bm \pi_{1} w_i + \bm \pi_{2} w_j + \bm \pi_{3} w^R_{ij} + \bm \pi_{4} w^I_{ij} \geq \bm {w^l}_{j} w_i + \bm {w^l}_{i} w_j - \bm {w^l}_{i} \bm {w^l}_{j} \label{eq:cc_cut_2} 
\end{align}
\end{subequations}
\begin{proposition}
Using the parameterization of Model \ref{model:svfs}, the valid inequalities \eqref{eq:cc_cut_1},\eqref{eq:cc_cut_2} are equivalent to \eqref{eq:4d_cut_1},\eqref{eq:4d_cut_2}, respectively.
\end{proposition}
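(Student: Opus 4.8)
The plan is to apply the translation dictionary $\sqrt{\bm {w^l}_i} = \bm {v^l}_i$, $\sqrt{\bm {w^u}_i} = \bm {v^u}_i$ (and likewise for $j$), $\bm {t^l}_{ij} = \tan(\bm {\theta^l}_{ij})$, $\bm {t^u}_{ij} = \tan(\bm {\theta^u}_{ij})$ to each of the five constants \eqref{eq:cc_const_1}--\eqref{eq:cc_const_5}, and then to show that, after clearing a single positive scalar, \eqref{eq:cc_cut_1} and \eqref{eq:cc_cut_2} reduce verbatim to \eqref{eq:4d_cut_1} and \eqref{eq:4d_cut_2}. Three of the constants translate immediately to $\bm \pi_0 = -\bm {v^l}_i\bm {v^l}_j\bm {v^u}_i\bm {v^u}_j$, $\bm \pi_1 = -\bm {v^l}_j\bm {v^u}_j$, and $\bm \pi_2 = -\bm {v^l}_i\bm {v^u}_i$, while the common prefactors $\sqrt{\bm {w^l}_i}+\sqrt{\bm {w^u}_i}$ collapse to $\bm {v^\sigma}_i$ and $\bm {v^\sigma}_j$.

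The crux is the trigonometric simplification of $\bm \pi_3$ and $\bm \pi_4$. First I would establish, for $\theta \in (-\bm \pi/2, \bm \pi/2)$, the half-angle identity $\frac{\sqrt{1+\tan^2(\theta)}-1}{\tan(\theta)} = \tan(\theta/2)$, using $\sqrt{1+\tan^2(\theta)} = 1/\cos(\theta)$ (valid since $\cos(\theta) > 0$ on this interval) followed by $(1-\cos\theta)/\sin\theta = \tan(\theta/2)$. This turns the two repeated fractions into $\tan(\bm {\theta^l}_{ij}/2)$ and $\tan(\bm {\theta^u}_{ij}/2)$. Writing $\alpha = \bm {\theta^l}_{ij}/2$, $\beta = \bm {\theta^u}_{ij}/2$ and multiplying numerator and denominator of each fraction by $\cos\alpha\cos\beta$, the product-to-sum identities give
\begin{align*}
\frac{1 - \tan\alpha\tan\beta}{1 + \tan\alpha\tan\beta} = \frac{\cos(\alpha+\beta)}{\cos(\alpha-\beta)} = \frac{\cos(\bm \phi_{ij})}{\cos(\bm \delta_{ij})}, \qquad \frac{\tan\alpha + \tan\beta}{1 + \tan\alpha\tan\beta} = \frac{\sin(\alpha+\beta)}{\cos(\alpha-\beta)} = \frac{\sin(\bm \phi_{ij})}{\cos(\bm \delta_{ij})},
\end{align*}
since $\alpha+\beta = \bm \phi_{ij}$ and $\alpha-\beta = -\bm \delta_{ij}$. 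Hence $\bm \pi_3 = \bm {v^\sigma}_i\bm {v^\sigma}_j\cos(\bm \phi_{ij})/\cos(\bm \delta_{ij})$ and $\bm \pi_4 = \bm {v^\sigma}_i\bm {v^\sigma}_j\sin(\bm \phi_{ij})/\cos(\bm \delta_{ij})$.

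With all constants translated, I would substitute into \eqref{eq:cc_cut_1}, move every term to the left, and collect the coefficients of $w_i$, $w_j$, and the constant. The $w_i$ coefficient $-\bm {v^l}_j\bm {v^u}_j - (\bm {v^u}_j)^2$ factors as $-\bm {v^u}_j\bm {v^\sigma}_j$, the $w_j$ coefficient as $-\bm {v^u}_i\bm {v^\sigma}_i$, and the constant $-\bm {v^l}_i\bm {v^l}_j\bm {v^u}_i\bm {v^u}_j + (\bm {v^u}_i\bm {v^u}_j)^2$ as $\bm {v^u}_i\bm {v^u}_j(\bm {v^u}_i\bm {v^u}_j - \bm {v^l}_i\bm {v^l}_j)$. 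Multiplying the inequality by $\cos(\bm \delta_{ij}) > 0$ (positive since $\bm \delta_{ij} \in [0,\bm \pi/2)$) clears the denominators contributed by $\bm \pi_3,\bm \pi_4$ and reproduces \eqref{eq:4d_cut_1}. The identical substitution in \eqref{eq:cc_cut_2}, where the coefficients now factor through $\bm {v^l}_j\bm {v^\sigma}_j$, $\bm {v^l}_i\bm {v^\sigma}_i$, and $\bm {v^l}_i\bm {v^l}_j(\bm {v^l}_i\bm {v^l}_j - \bm {v^u}_i\bm {v^u}_j)$, yields \eqref{eq:4d_cut_2}. I expect the only genuine obstacle to be the half-angle reduction of $\bm \pi_3,\bm \pi_4$; once the two fractions are identified as $\cos(\bm \phi_{ij})/\cos(\bm \delta_{ij})$ and $\sin(\bm \phi_{ij})/\cos(\bm \delta_{ij})$, the rest is bookkeeping, and the clean observation is that the positive factor $\cos(\bm \delta_{ij})$ is exactly the denominator those constants carry, so clearing it leaves the cosine-free coefficients of \eqref{eq:4d_cut_1}--\eqref{eq:4d_cut_2}.
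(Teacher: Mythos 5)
Your proposal is correct, and its overall architecture is the same as the paper's Appendix~\ref{sec:qcqp_cut}: translate the constants through the dictionary, reduce the two fractional expressions in $\bm \pi_{3}, \bm \pi_{4}$ to $\cos(\bm \phi_{ij})/\cos(\bm \delta_{ij})$ and $\sin(\bm \phi_{ij})/\cos(\bm \delta_{ij})$, then substitute, factor the $w_i, w_j$ coefficients through $\bm {v^\sigma}_i, \bm {v^\sigma}_j$, and clear the positive factor $\cos(\bm \delta_{ij})$. Where you genuinely differ is in how the two key simplification lemmas are proved. The paper stops at $(\sec\theta - 1)/\tan\theta = (1-\cos\theta)/\sin\theta$, expands each fraction's numerator and denominator over $\sin(\bm {\theta^l})\sin(\bm {\theta^u})$, invokes product-to-sum identities such as $\sin(\bm {\theta^l})\sin(\bm {\theta^u}) = \cos(\bm \delta)^2 - \cos(\bm \phi)^2$, and finishes by cancelling a common factor $(\cos(\bm \delta) - \cos(\bm \phi))$ --- a cancellation that silently requires $\bm {\theta^l}, \bm {\theta^u} \neq 0$, precisely the special cases the paper excludes in its footnote. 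You instead push one step further to the half-angle identity $(1-\cos\theta)/\sin\theta = \tan(\theta/2)$ and apply addition formulas to the half-angles, with $\alpha + \beta = \bm \phi_{ij}$ and $\alpha - \beta = -\bm \delta_{ij}$; this avoids the cancellation entirely, makes the emergence of $\bm \phi_{ij}$ and $\bm \delta_{ij}$ structurally transparent, and your degenerate case $\tan\theta = 0$ is covered by the same footnote exclusion. Your final bookkeeping is also more explicit than the paper's, whose closing lemma is dispatched in one sentence. One pedantic note: $\bm \delta_{ij}$ lies in $(0, \bm \pi/2]$ rather than $[0,\bm \pi/2)$, but at the endpoint $\bm \delta_{ij} = \bm \pi/2$ one of $\bm {\theta^l}, \bm {\theta^u}$ equals $\pm \bm \pi/2$ where $\tan$ is undefined, so $\cos(\bm \delta_{ij}) > 0$ holds wherever the cuts of \cite{chen_qcqp} exist and your multiplication step is sound.
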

\noindent
A proof can be found in Appendix \ref{sec:qcqp_cut}.
%

This result highlights how the transcendental characterization of the constant values (e.g. $\cos(\bm \phi_{ij})$, $\cos(\bm \delta_{ij})$, $\tan(\bm {\theta^l}_{ij})$, ...) used in Model \ref{model:svfs} simplifies the presentation of these valid inequalities as well as the proofs of their validity.

Together, all of these connections illustrate that the lifted nonlinear cuts proposed here and the valid inequalities from \cite{chen_qcqp} are a generalization of the cuts proposed in \cite{6822653} and \cite{7056568} that combines the strengths of both previous works.

\subsection{Bound Tightening}

It was observed in \cite{cp_qc_fp} that both the SDP and QC models benefit significantly from tightening the bounds on $v_i$ and $\theta_{ij}$.
Additionally, the convex envelopes of the QC model and all of the cuts proposed here also benefit form tight bounds.
Hence, we utilize the minimal network consistency algorithm proposed in \cite{cp_qc_fp} to strengthen all of the relaxations considered here.

\subsection{Impact on Model Size}
\label{sec:model_size}

This section has introduced a variety of methods for strengthening the SDP relaxation (i.e. Model \ref{model:ac_opf_w_sdp}), including adding the QC model constraints and/or lifted nonlinear cuts.
It is important to take note of the model size implications of each of these approaches.  
The  lifted nonlinear cuts are a notably light-weight improvement to the SDP relaxation and only require adding $2|E|$ linear constraints, and no additional variables.  
The QC constraints increase the model's size significantly and require adding $2|V| + 5|E|$ variables, $1 + |V| + 15|E|$ linear constraints, and $|V| + |E|$ quadratic constraints.
Consequently, one would expect the QC model to be stronger than the lifted nonlinear cuts but at the cost of a significant computation burden.

\section{Experimental Evaluation}
\label{sec:experiments}

This section assesses the benefits of all three SDP strengthening approaches in a step-wise fashion.
The assessment is done by comparing four variants of the SDP relaxation for bounding primal
AC-OPF solutions produced by IPOPT, which only guarantees local optimality. 
The four relaxations under consideration are as follows:
\begin{enumerate}
\item SDP-N : the SDP relaxation strengthened with the bound tightening proposed in \cite{cp_qc_fp}.
\item SDP-N+LNC : SDP-N with the addition of lifted nonlinear cuts.
\item SDP-N+QC : SDP-N with the conjunction of the QC model.
\item SDP-N+QC+LNC : SDP-N with the QC model and lifted nonlinear cuts.
\end{enumerate}
%

\paragraph*{Experimental Setting}

All of the computations are conducted on \amdqtwo. IPOPT 3.12
\cite{Ipopt} with linear solver ma27 \cite{hsl_lib}, as suggested by \cite{acopf_solvers}, was used as a
heuristic for finding locally optimal feasible solutions to the non-convex AC-OPF formulated in AMPL \cite{ampl}. 
The SDP relaxations were based on
the state-of-the-art implementation \cite{opfBranchDecompImpl} which
uses a branch decomposition \cite{opfBranchDecomp} for performance and scalability gains.  
The SDP solver SDPT3 4.0 \cite{Toh99sdpt3} was used with the modifications suggested in \cite{opfBranchDecompImpl}.
The tight variable bounds for SDP-N are pre-computed using the algorithm in \cite{cp_qc_fp}.  
If all of the subproblems are computed in parallel, the bound tightening computation adds an overhead of less than 1 minute, which is not reflected in the runtime results presented here.

\paragraph*{Open Test Cases}

Due to the computational burden of using modern SDP solvers on cases with more than 1000-buses \cite{qc_opf_tps}, the evaluation was conducted on 71 test cases from NESTA v0.6.0 \cite{nesta} that have less than 1000-buses.
Among these 71 test cases it was observed that the base case, SDP-N, was able to close the optimality gap to less than 1.0\% in 55 cases, leaving 16 open test cases.
Hence, we focus our attention on those test cases where the SDP-N optimality gap is greater than 1.0\%. Detailed performance and runtime results are present in Table \ref{tbl:gaps_time} and can be summarized as follows: 
\begin{enumerate}
\item SDP-N+LNC brings significant improvements to the SDP-N relaxation, often reducing the optimality gap by several percentage points.
\item SDP-N+QC is generally stronger than SDP-N+LNC, however nesta\_case162\_ieee\_dtc\_\_sad, \\{nesta\_case9\_na\_cao\_\_nco}, nesta\_case9\_nb\_cao\_\_nco are notable exceptions, illustrating that there is value in adding both the QC model and the lifted nonlinear cuts to the SDP relaxation.
\item The strongest model, SDP-N+QC+LNC, has reduced to optimality gap of 8 of the 16 of the open cases to less than 1\% (i.e. closing 50\% of the open cases), leaving only 8 for further investigation.  Furthermore, on 3 of the 8 open cases, the AC solution is known to be globally optimal, indicating that the only source of the optimality gap comes from convexificaiton.  These cases are ideal candidates for evaluation of nonconvex optimization algorithms.
\item Although the size of the SDP-N+QC model is significantly larger than SDP-N+LNC (as discussed in Section \ref{sec:model_size}), we observe that the runtimes do not vary significantly.  We suspect that the SDP iteration computation dominates the runtime on the test cases considered here.
\end{enumerate}

\begin{table*}[t]
\scriptsize
\center
\caption{Quality and Runtime Results of AC Power Flow Relaxations (open cases)}
\begin{tabular}{|r||r||r|r|r|r||r|r|r|r|r|r|r|r|r|c|c|}
\hline
                 & \$/h & \multicolumn{4}{c||}{Optimality Gap (\%)} & \multicolumn{5}{c|}{Runtime (seconds)} \\
                 &       &             &           &          & +QC   &      &              &           &         & +QC \\
Test Case & AC & SDP-N & +LNC & +QC & +LNC & AC & SDP-N & +LNC & +QC & +LNC \\
\hline
\hline
\multicolumn{11}{|c|}{Typical Operating Conditions (TYP)} \\
\hline
 nesta\_case5\_pjm &  {\bf 17551.89} & 5.22 & 5.06 & 3.96 & 3.96 & 0.16 & 3.18 & 2.92 & 3.36 & 3.04 \\
\hline
\hline
\multicolumn{11}{|c|}{Congested Operating Conditions (API)} \\
\hline
 nesta\_case30\_fsr\_\_api & 372.14 & 3.58 & 1.03 & 0.89 & 0.61 & 0.09 & 3.63 & 5.38 & 5.46 & 5.56 \\
\hline
 nesta\_case89\_pegase\_\_api & 4288.02 & 18.11 & 18.08$^\star$ & 17.09$^\star$ & 16.60$^\star$ & 0.50 & 12.44 & 13.17 & 47.50 & 28.27 \\
\hline
 nesta\_case118\_ieee\_\_api & 10325.27 & 16.72 & 8.70 & 3.40 & 3.32 & 0.40 & 8.49 & 9.61 & 10.73 & 13.62 \\
\hline
\hline
\multicolumn{11}{|c|}{Small Angle Difference Conditions (SAD)} \\
\hline
 nesta\_case24\_ieee\_rts\_\_sad & 79804.30 & 1.38 & 0.05 & 0.07 & 0.02 & 0.20 & 3.80 & 4.13 & 3.27 & 3.77 \\
\hline
 nesta\_case29\_edin\_\_sad & 46931.74 & 5.79 & 1.90 & 0.53 & 0.50 & 0.35 & 4.70 & 5.34 & 6.23 & 6.11 \\
\hline
 nesta\_case73\_ieee\_rts\_\_sad & 235241.58 & 2.41 & 0.18 & 0.05 & 0.03 & 0.26 & 6.44 & 6.80 & 8.01 & 8.51 \\
\hline
 nesta\_case118\_ieee\_\_sad & 4324.17 & 4.04 & 1.16 & 0.83 & 0.74 & 0.32 & 11.21 & 10.44 & 11.65 & 14.31 \\
\hline
 nesta\_case162\_ieee\_dtc\_\_sad & 4369.19 & 1.73 & 0.37 & 1.49 & 0.35 & 0.68 & 20.16 & 20.18 & 53.54 & 40.58 \\
\hline
 nesta\_case189\_edin\_\_sad & 914.64 & 1.20$^\star$ & 0.89$^\star$ & err. & 0.86$^\star$ & 0.29 & 7.51 & 10.91 & 36.24$^\star$ & 54.44 \\
\hline
\hline
\multicolumn{11}{|c|}{Nonconvex Optimization Cases (NCO)} \\
\hline
 nesta\_case9\_na\_cao\_\_nco & {\bf -212.43} & 18.00 & 11.66 & 15.91 & 11.62 & 0.05 & 2.42 & 2.66 & 3.26 & 2.30 \\
\hline
 nesta\_case9\_nb\_cao\_\_nco & {\bf -247.42} & 19.23 & 11.77 & 16.46 & 11.76 & 0.18 & 2.44 & 2.55 & 2.06 & 2.31 \\
\hline
 nesta\_case14\_s\_cao\_\_nco & 9670.44 & 2.96 & 2.92 & 2.06 & 2.03 & 0.07 & 3.21 & 2.86 & 2.91 & 3.10 \\
\hline
\hline
\multicolumn{11}{|c|}{Radial Toplogies (RAD)} \\
\hline
 nesta\_case9\_kds\_\_rad & 11279.48 & 1.09 & 0.13 & 1.04$^\star$ & 0.13 & 0.29 & 2.47 & 2.34 & 2.08 & 2.54 \\
\hline
 nesta\_case30\_kds\_\_rad & {\bf 4336.18}$^\dagger$ & 2.11 & 1.88 & 1.97 & 1.88 & n.a. & 4.02 & 3.25 & 6.51 & 3.84 \\
\hline
 nesta\_case30\_l\_kds\_\_rad & {\bf 3607.73}$^\dagger$ & 15.86 & 15.56 & 15.76 & 15.56 & n.a. & 3.53 & 3.28 & 4.69 & 4.26 \\
\hline
%
\end{tabular}\\
{\bf bold} - known global optimum, $\dagger$ - best known solution (not initial ipopt solution), $\star$ - solver reported numerical accuracy warnings.
\label{tbl:gaps_time}
\end{table*}

\begin{figure}[t!]
\center
    \includegraphics[width=6.0cm]{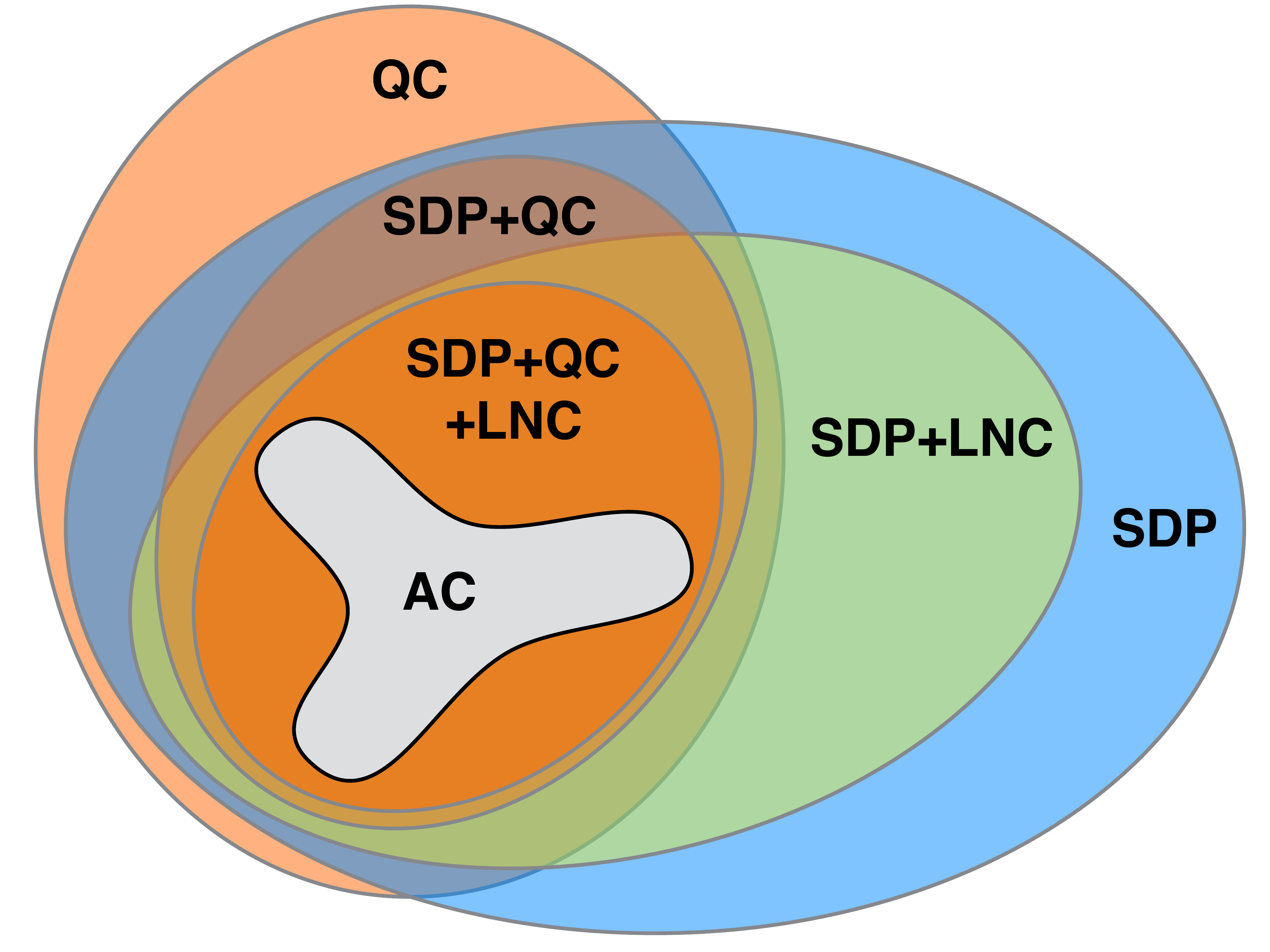} 
\caption{A Venn Diagram of the Solutions Sets for Various SDP Relaxations (set sizes in this illustration are not to scale).}
\label{fig:sdp_sets}
\end{figure}

\paragraph*{Relations of the Power Flow Relaxations}

From the results presented in Table \ref{tbl:gaps_time}, we can conclude that the QC and lifted nonlinear cuts have different strengths and weaknesses and one does not dominate the other.
Using this information, Figure \ref{fig:sdp_sets} presents an updated Venn Diagram of relaxations (originally presented in \cite{qc_opf_tps}) to reflect the various strengthened relaxations considered here.

\section{Conclusion}
\label{sec:conclusion}

With several years of steady progress on convex relaxations of the AC power flow equations, the optimality gap on the vast majority of AC Optimal Power Flow (AC-OPF) test cases has been closed to less than 1\%.
This paper sought to push the limits of convex relaxations even further and close the optimality gap on the 16 remaining open test cases. 
To that end, the SDP-N+QC+LNC power flow relaxation was developed by hybridizing the SDP and QC relaxations, proposing lifted nonlinear cuts, and performing bounds propagation.
The proposed model was able to reduce the optimality gap to less than 1\% on 8 of the 16 open cases.
Overall, this approach was able to close the gap on 88.7\% of the 71 AC-OPF cases considered herein.

The key weakness of the SDP-N+QC+LNC relaxation is its reliance on SDP solving technology, which suffers from scalability limitations \cite{qc_opf_tps}.
Fortunately, recent works have proposed promising approaches for scaling the SDP relaxations to larger test cases \cite{sdp_cuts_report, strong_soc_report}.
Despite the current scalability challenges, it may still be beneficial to perform this costly SDP computation at the root node of a branch-and-bound method for proving a tight lower bound.  
Indeed, after ten hours of computation, off-the-shelf global optimization solvers \cite{scip,belotti2009couenne} cannot close the optimality gap on the vast majority of AC-OPF test cases.

Thinking more broadly, this work highlights two notable facts about the classic AC-OPF problem.  
First, interior point methods (e.g., Ipopt) are able to find globally optimal solutions in the vast majority of test cases.
Second, it is possible to enclose the non-convex AC-OPF feasibility region in tight convex set, leading to convex relaxations providing very small optimality gaps.
Both of these results are interesting given that the AC-OPF is a non-convex optimization problem, which is known to be NP-Hard in general \cite{verma2009power,ACSTAR2015}.

\bibliographystyle{plain}
\bibliography{../power_models}

\clearpage
\appendix

\section{Analysis of Extreme Values of $W_{ij}$}
\label{sec:w_bounds}

This appendix analyzes the extreme points of $W_{ij}$ in the nonconvex Model \ref{model:ac_opf_w} to develop valid variable bounds for $W_{ij}$.
We begin by developing some basic properties about the minimum and maximum values of various functions.
Then we will compose these properties to develop valid bounds for $W_{ij}$.

Let $f(x,y) = xy$. First we consider the case of multiplying two positive numbers, namely,
\begin{subequations}
\begin{align}
0 \leq \bm {x^l} \leq \bm {x^u}, 0 \leq \bm {y^l} \leq \bm {y^u}
\end{align}
\end{subequations}
observe that the values of $f(x,y)$ are ordered as follows,
\begin{subequations}
\begin{align}
f(\bm {x^l},\bm {y^l}) \leq f(\bm {x^u},\bm {y^l}), f(\bm {x^l},\bm {y^u}) \leq f(\bm {x^u},\bm {y^u})
\end{align}
\end{subequations}
consequently we have,
\begin{lemma} 
\label{lemma:pp_prod}
For positive $x$ and $y$,
\begin{subequations}
\begin{align}
\min(xy) &= \bm {x^l}\bm {y^l} \\
\max(xy) &= \bm {x^u}\bm {y^u}
\end{align}
\end{subequations}
\end{lemma}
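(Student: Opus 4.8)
The plan is to exploit the monotonicity of multiplication over the nonnegative reals, which follows immediately from the standing sign hypotheses $0 \leq \bm {x^l} \leq \bm {x^u}$ and $0 \leq \bm {y^l} \leq \bm {y^u}$. The essential observation is that for any fixed nonnegative value of one argument, the product $xy$ is nondecreasing in the other argument. The four-corner ordering displayed just above the lemma is precisely this componentwise monotonicity evaluated at the vertices of the box $[\bm {x^l}, \bm {x^u}] \times [\bm {y^l}, \bm {y^u}]$, so the task is simply to promote that vertex ordering to a statement over the whole box.

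For the lower bound I would take an arbitrary feasible point $(x,y)$ with $\bm {x^l} \leq x \leq \bm {x^u}$ and $\bm {y^l} \leq y \leq \bm {y^u}$ and chain two inequalities. Since $y \geq \bm {y^l} \geq 0$, multiplying $\bm {x^l} \leq x$ through by the nonnegative quantity $y$ gives $\bm {x^l} y \leq x y$; and since $\bm {x^l} \geq 0$, multiplying $\bm {y^l} \leq y$ through by $\bm {x^l}$ gives $\bm {x^l} \bm {y^l} \leq \bm {x^l} y$. Combining the two yields $\bm {x^l} \bm {y^l} \leq x y$ for every feasible point, and equality is attained at $(x,y)=(\bm {x^l}, \bm {y^l})$, which establishes $\min(xy) = \bm {x^l} \bm {y^l}$. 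The upper bound is entirely symmetric: multiplying $x \leq \bm {x^u}$ by $y \geq 0$ and then $y \leq \bm {y^u}$ by $\bm {x^u} \geq 0$ produces the chain $xy \leq \bm {x^u} y \leq \bm {x^u} \bm {y^u}$, with equality at the opposite corner, giving $\max(xy) = \bm {x^u} \bm {y^u}$.

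There is no genuine obstacle in this argument; the only point demanding care is that each multiplication step preserve the direction of the inequality, which requires exactly the nonnegativity of the multiplier, and this is guaranteed by the positivity assumptions on the bounds. The result is a special case of the general fact that a bilinear function over a box attains its extrema at a vertex, but for this positive-orthant setting the direct two-step monotonicity chain is cleaner and self-contained, so I would present it in that form rather than invoking the more general machinery.
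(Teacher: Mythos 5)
Your proof is correct and follows essentially the same route as the paper, which likewise rests on the monotonicity of the product in each argument over the nonnegative box. The only difference is one of completeness: the paper merely displays the ordering of the four corner values $f(\bm{x^l},\bm{y^l}) \leq f(\bm{x^u},\bm{y^l}), f(\bm{x^l},\bm{y^u}) \leq f(\bm{x^u},\bm{y^u})$ and reads off the lemma, whereas you make explicit the chaining step $\bm{x^l}\bm{y^l} \leq \bm{x^l} y \leq xy \leq \bm{x^u} y \leq \bm{x^u}\bm{y^u}$ that promotes the vertex comparison to a bound valid at every point of the box.
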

\noindent
Second we consider the case of multiplying a positive number with a negative number, namely,
\begin{subequations}
\begin{align}
0 \leq \bm {x^l} \leq \bm {x^u}, \bm {y^l} \leq \bm {y^u} \leq 0
\end{align}
\end{subequations}
observe that the values of $f(x,y)$ are ordered as follows,
\begin{subequations}
\begin{align}
f(\bm {x^u},\bm {y^l}) \leq f(\bm {x^l},\bm {y^l}), f(\bm {x^u},\bm {y^u}) \leq f(\bm {x^l},\bm {y^u})
\end{align}
\end{subequations}
consequently we have,
\begin{lemma} 
\label{lemma:pu_prod} 
For positive $x$ and negative $y$,
\begin{subequations}
\begin{align}
\min(xy) &= \bm {x^u}\bm {y^l} \\
\max(xy) &= \bm {x^l}\bm {y^u}
\end{align}
\end{subequations}
\end{lemma}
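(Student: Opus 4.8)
The plan is to follow exactly the template already used for Lemma~\ref{lemma:pp_prod}. The essential structure is that $f(x,y)=xy$ is separately affine in each of its two arguments, so over the compact box $[\bm{x^l},\bm{x^u}]\times[\bm{y^l},\bm{y^u}]$ its minimum and maximum are each attained at one of the four corners. I would open with this observation: fixing $y$, the map $x\mapsto xy$ is linear and hence monotone, attaining its extreme values at $x\in\{\bm{x^l},\bm{x^u}\}$; optimizing the resulting corner values over $y$ then reduces the problem to comparing only the four numbers $\bm{x^l}\bm{y^l}$, $\bm{x^u}\bm{y^l}$, $\bm{x^l}\bm{y^u}$, and $\bm{x^u}\bm{y^u}$.

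Second, I would assemble the total order on these four values. The two displayed inequalities immediately preceding the lemma already supply the comparisons at fixed $y$: because $\bm{x^l}\le\bm{x^u}$ and $\bm{y^l},\bm{y^u}\le 0$, multiplying by a nonpositive number reverses the order, yielding $\bm{x^u}\bm{y^l}\le\bm{x^l}\bm{y^l}$ and $\bm{x^u}\bm{y^u}\le\bm{x^l}\bm{y^u}$. To finish the sort I would add the two comparisons at fixed $x$: since $\bm{y^l}\le\bm{y^u}$ and $\bm{x^l},\bm{x^u}\ge 0$, we get $\bm{x^u}\bm{y^l}\le\bm{x^u}\bm{y^u}$ and $\bm{x^l}\bm{y^l}\le\bm{x^l}\bm{y^u}$. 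Chaining these four relations places $\bm{x^u}\bm{y^l}$ below every other corner value and $\bm{x^l}\bm{y^u}$ above every other corner value, which is precisely the claim that $\min(xy)=\bm{x^u}\bm{y^l}$ and $\max(xy)=\bm{x^l}\bm{y^u}$.

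There is no genuinely hard step here; the only thing demanding care is the sign bookkeeping, because the nonpositivity of $y$ flips the direction of monotonicity in $x$ relative to the all-positive case of Lemma~\ref{lemma:pp_prod}. It is exactly this sign flip that sends the minimum to the corner $(\bm{x^u},\bm{y^l})$ rather than $(\bm{x^l},\bm{y^l})$, and one must resist carrying over the all-positive intuition. I would guard against a transcription error by checking a concrete instance such as $x\in[1,2]$, $y\in[-3,-1]$, where the four corner products are $-3,-6,-1,-2$, confirming $\min=-6=\bm{x^u}\bm{y^l}$ and $\max=-1=\bm{x^l}\bm{y^u}$.
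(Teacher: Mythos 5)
Your proof is correct and follows essentially the same route as the paper: the paper's argument is exactly the corner-comparison observation, displaying the orderings $f(\bm{x^u},\bm{y^l}) \leq f(\bm{x^l},\bm{y^l})$ and $f(\bm{x^u},\bm{y^u}) \leq f(\bm{x^l},\bm{y^u})$ and concluding the lemma from the bilinearity of $xy$. If anything, your version is slightly more complete, since you make explicit the two fixed-$x$ comparisons ($\bm{x^u}\bm{y^l} \leq \bm{x^u}\bm{y^u}$ and $\bm{x^l}\bm{y^l} \leq \bm{x^l}\bm{y^u}$) needed to chain the four corner values into a total order, a step the paper leaves implicit.
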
 
\noindent
Third we consider the case of multiplying a positive number with a negative or positive number, namely,
\begin{subequations}
\begin{align}
0 \leq \bm {x^l} \leq \bm {x^u}, \bm {y^l} \leq 0 \leq \bm {y^u}
\end{align}
\end{subequations}
observe that the values of $f(x,y)$ are ordered as follows,
\begin{subequations}
\begin{align}
f(\bm {x^u},\bm {y^l}) \leq f(\bm {x^l},\bm {y^l}) \leq f(\bm {x^l},\bm {y^u}) \leq f(\bm {x^u},\bm {y^u})
\end{align}
\end{subequations}
consequently we have,
\begin{lemma} 
\label{lemma:pu_prod1} 
For positive $x$ and positive or negative $y$,
\begin{subequations}
\begin{align}
\min(xy) &= \bm {x^u}\bm {y^l} \\
\max(xy) &= \bm {x^u}\bm {y^u}
\end{align}
\end{subequations}
\end{lemma}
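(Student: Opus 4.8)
The plan is to exploit the bilinearity of $f(x,y) = xy$. Since $f$ is affine in $x$ for each fixed $y$ and affine in $y$ for each fixed $x$, its minimum and maximum over the rectangular domain $[\bm{x^l},\bm{x^u}] \times [\bm{y^l},\bm{y^u}]$ are attained at one of the four corners. First I would record the four corner values $\bm{x^l}\bm{y^l}$, $\bm{x^u}\bm{y^l}$, $\bm{x^l}\bm{y^u}$, and $\bm{x^u}\bm{y^u}$, which reduces the whole question to comparing these four numbers.

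Next I would establish the displayed chain $f(\bm{x^u},\bm{y^l}) \leq f(\bm{x^l},\bm{y^l}) \leq f(\bm{x^l},\bm{y^u}) \leq f(\bm{x^u},\bm{y^u})$ by three elementary sign arguments under the hypotheses $0 \leq \bm{x^l} \leq \bm{x^u}$ and $\bm{y^l} \leq 0 \leq \bm{y^u}$. The first inequality follows from $(\bm{x^u} - \bm{x^l})\bm{y^l} \leq 0$, i.e. a nonnegative width times the nonpositive $\bm{y^l}$; the middle from $\bm{x^l}(\bm{y^u}-\bm{y^l}) \geq 0$; and the last from $(\bm{x^u}-\bm{x^l})\bm{y^u} \geq 0$. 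Reading off the two endpoints of this chain then yields $\min(xy) = \bm{x^u}\bm{y^l}$ and $\max(xy) = \bm{x^u}\bm{y^u}$, as claimed.

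The only point requiring genuine care, and hence the main obstacle, is the reduction to corner values. I would either invoke the standard fact that a bilinear function on a box attains its extrema at a vertex, or give a one-line direct argument: fix the optimal $y^\ast$, observe that $x \mapsto x\,y^\ast$ is monotone and so its optimum lies at an endpoint of $[\bm{x^l},\bm{x^u}]$, then repeat the same reasoning in the $y$ variable. Everything after that is routine sign bookkeeping, entirely parallel to the two preceding results (Lemma \ref{lemma:pp_prod} and Lemma \ref{lemma:pu_prod}), the only difference being that here $\bm{y^l}$ and $\bm{y^u}$ have opposite signs, which is exactly what makes the extreme $x$-value $\bm{x^u}$ appear in both the min and the max.
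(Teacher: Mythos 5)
Your proof is correct and follows essentially the same route as the paper, which likewise establishes the corner ordering $f(\bm{x^u},\bm{y^l}) \leq f(\bm{x^l},\bm{y^l}) \leq f(\bm{x^l},\bm{y^u}) \leq f(\bm{x^u},\bm{y^u})$ and reads off the two endpoints. Your only addition is an explicit justification of the reduction to corner values via monotonicity in each variable, which the paper leaves implicit but which is a harmless (indeed welcome) strengthening.
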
 

\noindent
Next we consider the extreme values of $f(x) = \sin(x)$ and $g(x) = \cos(x)$ on the interval $-\bm \pi/2 \leq x \leq \bm \pi/2$.
Observing that $\cos(x)$ is non-monotone and has an inflection point at $x=0$, we will break this into three cases based on if the range includes the inflection point, specifically, $-\bm \pi/2 \leq \bm {x^l} \leq \bm {x^u} \leq 0$, $-\bm \pi/2 \leq \bm {x^l} < 0 < \bm {x^u} \leq \bm \pi/2$, and $0 \leq \bm {x^l} \leq \bm {x^u} \leq \bm \pi/2$.
For the first interval $\cos(x)$ is monotone increasing, thus,
\begin{lemma} 
\label{lemma:cos_bounds1} 
For $-\bm \pi/2 \leq \bm {x^l} \leq \bm {x^u} \leq 0$, 
\begin{subequations}
\begin{align}
\min(\cos(x)) &= \cos(\bm {x^l}) \\
\max(\cos(x)) &= \cos(\bm {x^u})
\end{align}
\end{subequations}
\end{lemma}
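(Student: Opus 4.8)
The plan is to establish the monotonicity of $\cos$ on the interval $[-\bm\pi/2, 0]$ and then read off the extreme values directly from the endpoints. First I would differentiate, noting that $\frac{d}{dx}\cos(x) = -\sin(x)$. On the interval under consideration we have $-\bm\pi/2 \leq x \leq 0$, and since $\sin$ is non-positive throughout this range, the derivative $-\sin(x)$ is non-negative. Hence $\cos$ is monotonically increasing on $[-\bm\pi/2, 0]$, which is precisely the fact pre-announced in the surrounding text.

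Given monotonicity, the conclusion is immediate. For any $x$ satisfying $\bm{x^l} \leq x \leq \bm{x^u}$, where $-\bm\pi/2 \leq \bm{x^l} \leq \bm{x^u} \leq 0$, monotone increase yields $\cos(\bm{x^l}) \leq \cos(x) \leq \cos(\bm{x^u})$. The lower bound is attained at $x = \bm{x^l}$ and the upper bound at $x = \bm{x^u}$, so $\min(\cos(x)) = \cos(\bm{x^l})$ and $\max(\cos(x)) = \cos(\bm{x^u})$, as claimed.

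The only point that requires any care is the sign of the derivative at the right endpoint $x=0$, where $-\sin(x)$ vanishes; since the derivative is strictly positive on the open interval $(-\bm\pi/2, 0)$ and zero only at the isolated boundary point, the function remains strictly increasing across $[\bm{x^l}, \bm{x^u}]$ and the extremizers stay at the endpoints. There is no genuine obstacle here: the argument is a single application of the first-derivative sign test on one monotone branch of the cosine, and the preceding lemmas on products and the parallel cases for the other two angular intervals will follow by the same endpoint-evaluation template.
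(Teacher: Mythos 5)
Your proof is correct and follows essentially the same route as the paper, which simply asserts that $\cos(x)$ is monotone increasing on $[-\bm\pi/2, 0]$ and reads off the extremes at the endpoints. You merely make explicit the justification the paper leaves implicit, namely the sign of the derivative $-\sin(x) \geq 0$ on this interval, which is a fine (and slightly more careful) rendering of the same one-line argument.
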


\noindent
For the second interval $\cos(x)$ passes through the inflection point and this is the maximum value at $x=0$.  The minimum value can occur on either side (i.e. $x<0$ or $x>0$) depending interval, however because both sides are monotone we know the minimum value will occur at one of the extreme points.
\begin{lemma} 
\label{lemma:cos_bounds2} 
For $-\bm \pi/2 \leq \bm {x^l} < 0 < \bm {x^u} \leq \bm \pi/2$, 
\begin{subequations}
\begin{align}
\min(\cos(x)) &= \min(\cos(\bm {x^l}), \cos(\bm {x^u})) \\
\max(\cos(x)) &= \cos(0) = 1
\end{align}
\end{subequations}
\end{lemma}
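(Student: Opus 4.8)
The plan is to reduce the claim to the monotonicity of $\cos$ on each side of its interior peak at $x = 0$. First I would record that $\cos'(x) = -\sin(x)$ is nonnegative on $[-\bm\pi/2, 0]$ and nonpositive on $[0, \bm\pi/2]$, so $\cos$ is increasing on $[-\bm\pi/2, 0]$ and decreasing on $[0, \bm\pi/2]$. This is exactly the structure already exploited in Lemma \ref{lemma:cos_bounds1} for the single-sided case, now applied to each half of the interval $[\bm{x^l}, \bm{x^u}]$.

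For the maximum, the hypothesis $\bm{x^l} < 0 < \bm{x^u}$ guarantees that $x = 0$ lies strictly inside $[\bm{x^l}, \bm{x^u}]$. Since $\cos(0) = 1$ is the global maximum of cosine over all of $\rit$, and this point is feasible, I would conclude immediately that $\max(\cos(x)) = 1$ with no further computation required.

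For the minimum, I would split the interval at $0$. On the left piece $[\bm{x^l}, 0]$, increasing monotonicity places the smallest value at the left endpoint, giving $\cos(\bm{x^l})$; on the right piece $[0, \bm{x^u}]$, decreasing monotonicity places the smallest value at the right endpoint, giving $\cos(\bm{x^u})$. The global minimum over the whole interval is then the smaller of these two candidates, namely $\min(\cos(\bm{x^l}), \cos(\bm{x^u}))$. There is no genuine obstacle here: the only point to verify carefully is that no interior point can undercut both endpoints, which the monotonicity of each branch rules out directly. The argument thus amounts to bookkeeping the two monotone pieces together with the observation that the maximizer is the interior point $x = 0$.
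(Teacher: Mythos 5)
Your proof is correct and follows essentially the same route as the paper: the maximum is attained at the interior point $x=0$ since it lies strictly inside the interval, and monotonicity of $\cos$ on each half of $[\bm{x^l},\bm{x^u}]$ forces the minimum to an endpoint, hence $\min(\cos(\bm{x^l}),\cos(\bm{x^u}))$. Your explicit use of $\cos'(x)=-\sin(x)$ merely makes precise the monotonicity the paper invokes informally (and you correctly treat $x=0$ as the peak, which the paper loosely calls an ``inflection point'').
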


\noindent
For the third interval $\cos(x)$ is monotone decreasing, thus,
\begin{lemma} 
\label{lemma:cos_bounds3} 
For $0 \leq \bm {x^l} \leq \bm {x^u} \leq \bm \pi/2$, 
\begin{subequations}
\begin{align}
\min(\cos(x)) &= \cos(\bm {x^u}) \\
\max(\cos(x)) &= \cos(\bm {x^l})
\end{align}
\end{subequations}
\end{lemma}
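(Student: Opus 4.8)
The plan is to exploit the monotonicity of cosine on the relevant interval, exactly mirroring the argument already used for Lemma \ref{lemma:cos_bounds1} but with the sign of the slope reversed. First I would recall that $\frac{d}{dx}\cos(x) = -\sin(x)$, which is non-positive on the interval $[0, \bm \pi/2]$ because $\sin(x) \geq 0$ there. Hence $\cos(x)$ is monotone decreasing on $[0, \bm \pi/2]$, and in particular on any sub-interval contained in it.

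Next I would invoke the hypothesis $0 \leq \bm {x^l} \leq \bm {x^u} \leq \bm \pi/2$, which guarantees that the entire interval $[\bm {x^l}, \bm {x^u}]$ lies inside the region of monotonicity established above. Since a monotone decreasing function attains its maximum at the left endpoint and its minimum at the right endpoint of any closed interval in its domain, I would conclude at once that $\max(\cos(x)) = \cos(\bm {x^l})$ and $\min(\cos(x)) = \cos(\bm {x^u})$, which is precisely the claim.

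There is essentially no obstacle here; the lemma is the direct counterpart of Lemma \ref{lemma:cos_bounds1}, differing only in that the sign of the derivative is negative rather than positive, so that the roles of the two endpoints are swapped. The single point warranting attention is confirming that the whole interval $[\bm {x^l}, \bm {x^u}]$ stays within $[0, \bm \pi/2]$, which the stated bounds $0 \leq \bm {x^l}$ and $\bm {x^u} \leq \bm \pi/2$ supply immediately. I expect the author's proof to be similarly telegraphic, simply noting that $\cos(x)$ is monotone decreasing on this third interval.
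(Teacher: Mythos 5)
Your proof is correct and matches the paper's argument, which simply notes that $\cos(x)$ is monotone decreasing on this third interval (your prediction that the paper's treatment is telegraphic is accurate). Your derivative computation $\frac{d}{dx}\cos(x) = -\sin(x) \leq 0$ on $[0, \bm \pi/2]$ merely makes explicit the monotonicity the paper asserts directly.
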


Given that $\sin(x)$ is monotone increasing over the complete range of $x$ only one case is nessiary,
\begin{lemma} 
\label{lemma:sin_bounds} 
For $-\bm \pi/2 \leq x \leq \bm \pi/2$, 
\begin{subequations}
\begin{align}
\min(\sin(x)) &= \sin(\bm {x^l}) \\
\max(\sin(x)) &= \sin(\bm {x^u})
\end{align}
\end{subequations}
\end{lemma}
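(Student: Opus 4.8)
The plan is to prove Lemma \ref{lemma:sin_bounds} by a single monotonicity argument, mirroring the structure used for the cosine lemmas (Lemmas \ref{lemma:cos_bounds1}--\ref{lemma:cos_bounds3}) but without any case analysis, since the sine function is globally monotone on the interval in question.

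First I would recall that $\frac{d}{dx}\sin(x) = \cos(x)$ and that $\cos(x) \geq 0$ for every $x \in [-\bm \pi/2, \bm \pi/2]$, with strict positivity on the open interval $(-\bm \pi/2, \bm \pi/2)$. Since the feasible range $[\bm {x^l}, \bm {x^u}]$ is contained in $[-\bm \pi/2, \bm \pi/2]$ by hypothesis, the derivative is non-negative throughout, so $\sin(x)$ is monotone increasing on $[\bm {x^l}, \bm {x^u}]$. From monotonicity it then follows immediately that $\sin(x)$ attains its minimum at the left endpoint and its maximum at the right endpoint of the feasible interval, giving $\min(\sin(x)) = \sin(\bm {x^l})$ and $\max(\sin(x)) = \sin(\bm {x^u})$, which is the claim.

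There is essentially no obstacle here, and indeed no genuine difficulty compared with the cosine case: where $\cos(x)$ required splitting into three subintervals around its inflection point at $x = 0$ (because $\cos$ is non-monotone and changes its behavior there), $\sin(x)$ is monotone increasing across the entire admissible range $[-\bm \pi/2, \bm \pi/2]$, so a single case suffices. The only point worth stating carefully is that the hypothesis $-\bm \pi/2 \leq x \leq \bm \pi/2$ is precisely what guarantees $\cos(x) \geq 0$ and hence that the derivative never changes sign; this is what licenses the one-case conclusion and distinguishes the sine analysis from the preceding cosine analysis.
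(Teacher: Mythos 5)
Your proof is correct and takes essentially the same route as the paper, which likewise notes that $\sin(x)$ is monotone increasing over the whole interval $[-\bm \pi/2, \bm \pi/2]$ so that only one case is needed and the extrema occur at the endpoints. You simply make the monotonicity explicit via the derivative $\cos(x) \geq 0$, which the paper leaves as an unstated observation.
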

\noindent
However, it is important to note that $\sin(x)$ is negative for $x < 0$ and positive for $x \geq 0$.
This is the only function considered thus far, which can yield negative values.

With these basic properties defined we are now in a position to develop bounds on $W_{ij}$.
We begin by noting the following real valued interpretation of $W_{ij}$,
\begin{subequations}
\begin{align}
\Re(W_{ij}) = w^R_{ij} = v_i v_j \cos(\theta)\\
\Im(W_{ij}) = w^I_{ij} = v_i v_j \sin(\theta)
\end{align}
\end{subequations}
and the variable bounds from Model \ref{model:ac_opf_w},
\begin{subequations}
\begin{align}
\bm {v^l}_i & \leq v_i \leq \bm {v^u}_i  \\
\bm {v^l}_j & \leq v_j \leq \bm {v^u}_j \\
-\bm \pi/2 \leq \bm {\theta^l}_{ij} & \leq \theta_{ij} \leq \bm {\theta^u}_{ij} \leq \bm \pi/2
\end{align}
\end{subequations}
Next we can compute values for $\min,\max$ of $w^R_{ij} , w^I_{ij}$ by composing the properties developed previously.
The analysis is broken into three cases based on the bounds of $\theta_{ij}$, to account for the inflection point in the cosine function.

\clearpage
\subsection*{Case 1}
In the range $-\bm \pi/2 \leq \bm {x^l} \leq \bm {x^u} \leq 0$,

\begin{table*}[h!]
\center
\begin{tabular}{cccccc}
 $\min(w^R_{ij})$ &   $\max(w^R_{ij})$ & $\min(w^I_{ij})$& $\max(w^I_{ij})$ \\
 $\min(v_i v_j \cos(\theta))$ &  $\max(v_i v_j \cos(\theta))$ & $\min(v_i v_j \sin(\theta))$ & $\max(v_i v_j \sin(\theta))$ \\
 $\min(v_i v_j) \min(\cos(\theta))$ & $\max(v_i v_j) \max(\cos(\theta))$ & $\max(v_i v_j) \min(\sin(\theta))$ &  $\min(v_i v_j) \max( \sin(\theta))$ \\
 $\bm {v^l}_i \bm {v^l}_j \cos(\bm {\theta^l}_{ij})$ & $\bm {v^u}_i \bm {v^u}_j \cos(\bm {\theta^u}_{ij})$ & $\bm {v^u}_i \bm {v^u}_j \sin(\bm {\theta^l}_{ij})$ &  $\bm {v^l}_i \bm {v^l}_j \sin(\bm {\theta^u}_{ij})$ \\
\end{tabular}
\label{tbl:bounds_1}
\end{table*}

\subsection*{Case 2}
In the range $-\bm \pi/2 \leq \bm {x^l} < 0 < \bm {x^u} \leq \bm \pi/2$,

{\center
\begin{tabular}{cccccc}
 $\min(w^R_{ij})$ &   $\max(w^R_{ij})$ & $\min(w^I_{ij})$& $\max(w^I_{ij})$ \\
 $\min(v_i v_j \cos(\theta))$ &  $\max(v_i v_j \cos(\theta))$ & $\min(v_i v_j \sin(\theta))$ & $\max(v_i v_j \sin(\theta))$ \\
 $\min(v_i v_j) \min(\cos(\theta))$ & $\max(v_i v_j) \max(\cos(\theta))$ & $\max(v_i v_j) \min(\sin(\theta))$ &  $\max(v_i v_j) \max( \sin(\theta))$ \\
 $\bm {v^l}_i \bm {v^l}_j \min(\cos(\bm {\theta^l}_{ij}), \cos(\bm {\theta^u}_{ij}))$ & $\bm {v^u}_i \bm {v^u}_j $ & $\bm {v^u}_i \bm {v^u}_j \sin(\bm {\theta^l}_{ij})$ &  $\bm {v^u}_i \bm {v^u}_j \sin(\bm {\theta^u}_{ij})$ \\
\end{tabular}
}

\subsection*{Case 3}
In the range $0 \leq \bm {x^l} \leq \bm {x^u} \leq \bm \pi/2$,

\begin{table*}[h!]
\center
\begin{tabular}{cccccc}
 $\min(w^R_{ij})$ &   $\max(w^R_{ij})$ & $\min(w^I_{ij})$& $\max(w^I_{ij})$ \\
 $\min(v_i v_j \cos(\theta))$ &  $\max(v_i v_j \cos(\theta))$ & $\min(v_i v_j \sin(\theta))$ & $\max(v_i v_j \sin(\theta))$ \\
 $\min(v_i v_j) \min(\cos(\theta))$ & $\max(v_i v_j) \max(\cos(\theta))$ & $\min(v_i v_j) \min(\sin(\theta))$ &  $\max(v_i v_j) \max( \sin(\theta))$ \\
 $\bm {v^l}_i \bm {v^l}_j \cos(\bm {\theta^u}_{ij})$ & $\bm {v^u}_i \bm {v^u}_j \cos(\bm {\theta^l}_{ij})$ & $\bm {v^l}_i \bm {v^l}_j \sin(\bm {\theta^l}_{ij})$ &  $\bm {v^u}_i \bm {v^u}_j \sin(\bm {\theta^u}_{ij})$ \\
\end{tabular}
\label{tbl:bounds_3}
\end{table*}

\noindent
Through these basic properties and utilizing bounds propagation, we have effectively developed valid bounds for $W_{ij}$.

\section{Derivation of Cuts from \cite{7056568}}
\label{sec:w_bound_cut}

In the interest of brevity we only consider the case where, $0 \leq \bm {\theta^l}_{ij} < \bm {\theta^u}_{ij} \leq \bm \pi/2$ and use the standard definition $\bm \phi_{ij} = (\bm {\theta^u}_{ij} + \bm {\theta^l}_{ij})/2, \bm \delta_{ij} = (\bm {\theta^u}_{ij} - \bm {\theta^l}_{ij})/2$. 
Following the algorithm from \cite{7056568}, we first must determine which of four cut cases this situation falls into.
First we compute the values for the bounds on $w^R_{ij}, w^I_{ij}$.
\begin{subequations}
\begin{align}
\bm {w^{Rl}}_{ij} &= \bm {v^l}_i \bm {v^l}_j \cos(\bm {\theta^u}_{ij}) \\
\bm {w^{Ru}}_{ij} &= \bm {v^u}_i \bm {v^u}_j \cos(\bm {\theta^l}_{ij}) \\
\bm {w^{Il}}_{ij}, &= \bm {v^l}_i \bm {v^l}_j \sin(\bm {\theta^l}_{ij}) \\
\bm {w^{Iu}}_{ij} &= \bm {v^u}_i \bm {v^u}_j \sin(\bm {\theta^u}_{ij})
\end{align}
\end{subequations}
Then we evaluate the values of $(\bm {w^{Rl}}_{ij})^2 + (\bm {w^{Il}}_{ij})^2, (\bm {w^{Rl}}_{ij})^2 + (\bm {w^{Iu}}_{ij})^2$ and compare them to $(\bm {v^l}_i \bm {v^l}_j)^2$.
We observe that,
\begin{subequations}
\begin{align}
(\bm {w^{Rl}}_{ij})^2 + (\bm {w^{Il}}_{ij})^2  &= (\bm {v^l}_i \bm {v^l}_j)^2 ( \cos(\bm {\theta^u}_{ij})^2 + \sin(\bm {\theta^l}_{ij})^2) \leq (\bm {v^l}_i \bm {v^l}_j)^2  \\
(\bm {w^{Rl}}_{ij})^2 + (\bm {w^{Iu}}_{ij})^2 &= (\bm {v^l}_i \bm {v^l}_j ( \cos(\bm {\theta^u}_{ij}))^2 + (\bm {v^u}_i \bm {v^u}_j \sin(\bm {\theta^u}_{ij}))^2 \geq (\bm {v^l}_i \bm {v^l}_j)^2 
\end{align}
\end{subequations}
Following the algorithm from \cite{7056568}, this falls into {\em Case 2}.  Next we compute two points,
\begin{subequations}
\begin{align}
\bm x_1 &= \bm {v^l}_i \bm {v^l}_j \cos(\bm {\theta^u}_{ij}) \\
\bm y_1 &= \sqrt{(\bm {v^l}_i \bm {v^l}_j)^2 - \bm {v^l}_i \bm {v^l}_j \cos(\bm {\theta^u}_{ij})} = \bm {v^l}_i \bm {v^l}_j \sin(\bm {\theta^u}_{ij}) \\
\bm x_2 &= \sqrt{(\bm {v^l}_i \bm {v^l}_j)^2 - \bm {v^l}_i \bm {v^l}_j \sin(\bm {\theta^l}_{ij})} = \bm {v^l}_i \bm {v^l}_j \cos(\bm {\theta^l}_{ij}) \\
\bm y_2 &= \bm {v^l}_i \bm {v^l}_j \sin(\bm {\theta^l}_{ij})
\end{align}
\end{subequations}
We now have all the constants required to apply the general cut of \cite{7056568}, which simply fits an inequality between these two points as follows,
\begin{subequations}
\begin{align}
(\bm y_1 - \bm y_2) w^R_{ij} - (\bm x_1 - \bm x_2) w^I_{ij} \geq \bm x_2 \bm y_1 - \bm x_1 \bm y_2
\end{align}
\end{subequations}
expanding in this particular context we have,
\begin{subequations}
\begin{align}
 (\bm {v^l}_i \bm {v^l}_j \sin(\bm {\theta^u}_{ij}) - \bm {v^l}_i \bm {v^l}_j \sin(\bm {\theta^l}_{ij}) ) w^R_{ij} - (\bm {v^l}_i \bm {v^l}_j \cos(\bm {\theta^u}_{ij})  - \bm {v^l}_i \bm {v^l}_j \cos(\bm {\theta^l}_{ij})) w^I_{ij} \nonumber \\ 
 \geq \bm {v^l}_i \bm {v^l}_j \cos(\bm {\theta^l}_{ij}) \bm {v^l}_i \bm {v^l}_j \sin(\bm {\theta^u}_{ij}) - \bm {v^l}_i \bm {v^l}_j \cos(\bm {\theta^u}_{ij})  \bm {v^l}_i \bm {v^l}_j \sin(\bm {\theta^l}_{ij}) \\
 (\sin(\bm {\theta^u}_{ij}) - \sin(\bm {\theta^l}_{ij}) ) w^R_{ij} - (\cos(\bm {\theta^u}_{ij}) - \cos(\bm {\theta^l}_{ij})) w^I_{ij} \geq \bm {v^l}_i \bm {v^l}_j (\cos(\bm {\theta^l}_{ij}) \sin(\bm {\theta^u}_{ij}) -  \sin(\bm {\theta^l}_{ij}) \cos(\bm {\theta^u}_{ij}))  \label{eq:case2_cut_long}
\end{align}
\end{subequations}
To further simplify this formula we make use the following trigonometric identities,
\begin{subequations}
\begin{align}
\sin(x)\cos(y)  &= (\sin(x+y) + \sin(x-y))/2 \nonumber \\
\cos(x)\sin(y)  &= (\sin(x+y) - \sin(x-y))/2 \nonumber \\
\cos(x)\cos(y) &= (\cos(x-y) + \cos(x+y))/2 \nonumber \\
\sin(x)\sin(y)   &= (\cos(x-y) - \cos(x+y))/2 \nonumber \\
\sin(2x)  &= 2 \sin(x)\cos(x) \nonumber
\end{align}
\end{subequations}
And observe the following properties,
\begin{subequations}
\begin{align}
(\cos(\bm {\theta^l}_{ij}) \sin(\bm {\theta^u}_{ij}) -  \sin(\bm {\theta^l}_{ij}) \cos(\bm {\theta^u}_{ij})) \\
-\sin(\bm {\theta^l}_{ij} - \bm {\theta^u}_{ij}) \\
-\sin(\bm \phi_{ij} - \bm \delta_{ij} - \bm \phi_{ij} - \bm \delta_{ij}) \\
\sin(2 \bm \delta_{ij}) \\
2 \sin(\bm \delta_{ij}) \cos(\bm \delta_{ij})
\end{align}
\end{subequations}
\begin{subequations}
\begin{align}
\sin(\bm {\theta^u}_{ij}) - \sin(\bm {\theta^l}_{ij}) \\
\sin(\bm \phi_{ij} + \bm \delta_{ij}) - \sin(\bm \phi_{ij} - \bm \delta_{ij})  \\
2 \cos(\bm \phi_{ij})\sin(\bm \delta_{ij})
\end{align}
\end{subequations}
\begin{subequations}
\begin{align}
\cos(\bm {\theta^l}_{ij}) - \cos(\bm {\theta^u}_{ij})\\
\cos(\bm \phi_{ij} - \bm \delta_{ij}) - \cos(\bm \phi_{ij} + \bm \delta_{ij})  \\
2 \sin(\bm \phi_{ij})\sin(\bm \delta_{ij})
\end{align}
\end{subequations}
Next we apply these properties to \eqref{eq:case2_cut_long} yielding,
\begin{subequations}
\begin{align}
2 \cos(\bm \phi_{ij})\sin(\bm \delta_{ij}) w^R_{ij} + 2 \sin(\bm \phi_{ij})\sin(\bm \delta_{ij}) w^I_{ij} \geq \bm {v^l}_i \bm {v^l}_j (2 \sin(\bm \delta_{ij}) \cos(\bm \delta_{ij})) \\
\cos(\bm \phi_{ij}) w^R_{ij} + \sin(\bm \phi_{ij}) w^I_{ij} \geq \bm {v^l}_i \bm {v^l}_j \cos(\bm \delta_{ij})
\end{align}
\end{subequations}


\section{Derivation of Cuts from \cite{chen_qcqp}}
\label{sec:qcqp_cut}

Utilizing the standard parameterization, $\bm \phi_{ij} = (\bm {\theta^u}_{ij} + \bm {\theta^l}_{ij})/2, \bm \delta_{ij} = (\bm {\theta^u}_{ij} - \bm {\theta^l}_{ij})/2$, throughout this section we will use the following trigonometric identities,
\begin{subequations}
\begin{align}
\sin(\theta^l)\sin(\theta^u) &= \cos(\delta)^2 - \cos(\phi)^2 \nonumber \\
\cos(\theta^l)\cos(\theta^u) &= \cos(\delta)^2 + \cos(\phi)^2 - 1 \nonumber \\
\cos(\theta^l)\sin(\theta^u) + \cos(\theta^u)\sin(\theta^l)  &= 2 \sin(\phi)\cos(\phi) \nonumber \\
\cos(\theta^l) + \cos(\theta^u) &= 2\cos(\delta)\cos(\phi) \nonumber \\
\sin(\theta^l) + \sin(\theta^u) &= 2\cos(\delta)\sin(\phi) \nonumber 
\end{align}
\end{subequations}
The primary challenge in showing the equivalence of \eqref{eq:cc_cut_1}-\eqref{eq:cc_cut_2} to \eqref{eq:4d_cut_1}-\eqref{eq:4d_cut_2}, is the treatment of the following two expressions,
\begin{subequations}
\begin{align}
& f(x,y) = \frac{1 - \left( \frac{\sqrt{1+(x)^2} -1}{x} \right) \left( \frac{\sqrt{1+(y)^2} -1}{y} \right)}{1 + \left( \frac{\sqrt{1+(x)^2} -1}{x} \right) \left( \frac{\sqrt{1+(y)^2} -1}{y} \right)} \label{eq:cc_f} \\
& g(x,y) = \frac{\left( \frac{\sqrt{1+(x)^2} -1}{x} \right) + \left( \frac{\sqrt{1+(y)^2} -1}{y} \right)}{1 + \left( \frac{\sqrt{1+(x)^2} -1}{x} \right) \left( \frac{\sqrt{1+(y)^2} -1}{y} \right)}  \label{eq:cc_g}
\end{align}
\end{subequations}
Hence we will focus our attention on these first.  For brevity, \eqref{eq:cc_f}-\eqref{eq:cc_g} skip the special cases where $x$ or $y$ is 0. 
\begin{lemma}
$f(\tan(\bm {\theta^l}), \tan(\bm {\theta^u})) = \frac{\cos(\bm \phi)}{\cos(\bm \delta)}$
\end{lemma}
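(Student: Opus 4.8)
The plan is to collapse the algebraically heavy quotient inside $f$ into a single half-angle tangent, after which the claim reduces to one cosine-of-sum/difference manipulation. The central observation is that for any $\bm\theta \in (-\bm\pi/2, \bm\pi/2)$ we have $\cos(\bm\theta) > 0$, so $\sqrt{1 + \tan^2(\bm\theta)} = 1/\cos(\bm\theta)$. Substituting this gives
$$\frac{\sqrt{1+\tan^2(\bm\theta)} - 1}{\tan(\bm\theta)} = \frac{1/\cos(\bm\theta) - 1}{\sin(\bm\theta)/\cos(\bm\theta)} = \frac{1 - \cos(\bm\theta)}{\sin(\bm\theta)} = \tan\!\left(\frac{\bm\theta}{2}\right),$$
where the final equality follows from $1 - \cos(\bm\theta) = 2\sin^2(\bm\theta/2)$ and $\sin(\bm\theta) = 2\sin(\bm\theta/2)\cos(\bm\theta/2)$, valid throughout the interval, with the footnote's exclusion of $\bm\theta = 0$ keeping the intermediate divisions well defined.

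First I would apply this reduction to both arguments, writing $a = \tan(\bm{\theta^l}/2)$ and $b = \tan(\bm{\theta^u}/2)$, so that
$$f(\tan(\bm{\theta^l}), \tan(\bm{\theta^u})) = \frac{1 - ab}{1 + ab} = \frac{1 - \tan(\bm{\theta^l}/2)\tan(\bm{\theta^u}/2)}{1 + \tan(\bm{\theta^l}/2)\tan(\bm{\theta^u}/2)}.$$
Next I would clear the half-angle tangents by multiplying numerator and denominator by $\cos(\bm{\theta^l}/2)\cos(\bm{\theta^u}/2)$, turning the expression into
$$\frac{\cos(\bm{\theta^l}/2)\cos(\bm{\theta^u}/2) - \sin(\bm{\theta^l}/2)\sin(\bm{\theta^u}/2)}{\cos(\bm{\theta^l}/2)\cos(\bm{\theta^u}/2) + \sin(\bm{\theta^l}/2)\sin(\bm{\theta^u}/2)} = \frac{\cos\!\left(\frac{\bm{\theta^l}+\bm{\theta^u}}{2}\right)}{\cos\!\left(\frac{\bm{\theta^l}-\bm{\theta^u}}{2}\right)}.$$
Finally, recognizing $(\bm{\theta^l}+\bm{\theta^u})/2 = \bm\phi$ and, since cosine is even, $\cos((\bm{\theta^l}-\bm{\theta^u})/2) = \cos((\bm{\theta^u}-\bm{\theta^l})/2) = \cos(\bm\delta)$ delivers $f = \cos(\bm\phi)/\cos(\bm\delta)$, as claimed.

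I expect the only genuine subtlety to be the square-root resolution in the first step: one must argue that $\sqrt{1+\tan^2(\bm\theta)}$ equals $+1/\cos(\bm\theta)$ rather than its negative, which is precisely where the standing domain assumption $\bm\theta \in (-\bm\pi/2, \bm\pi/2)$ is used. Once the half-angle tangent is in hand, the remaining steps are the same cosine sum/difference identities already exercised throughout this appendix, so no further difficulty is anticipated.
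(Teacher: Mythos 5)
Your proof is correct, and it takes a genuinely different route from the paper's. The paper substitutes $\sqrt{1+\tan^2(\bm\theta)} = \sec(\bm\theta)$ just as you do, but then keeps the quotients in the form $\frac{1-\cos(\bm\theta)}{\sin(\bm\theta)}$, expands the numerator and denominator of $f$ as rational expressions in $\cos(\bm{\theta^l})$, $\cos(\bm{\theta^u})$, $\sin(\bm{\theta^l})\sin(\bm{\theta^u})$, and converts everything via the product-to-sum identities listed at the head of the appendix (e.g.\ $\sin(\bm{\theta^l})\sin(\bm{\theta^u}) = \cos^2(\bm\delta) - \cos^2(\bm\phi)$, $\cos(\bm{\theta^l})+\cos(\bm{\theta^u}) = 2\cos(\bm\delta)\cos(\bm\phi)$), arriving at
\[
f = \frac{\cos(\bm\phi)\left(\cos(\bm\delta)-\cos(\bm\phi)\right)}{\cos(\bm\delta)\left(\cos(\bm\delta)-\cos(\bm\phi)\right)},
\]
and cancelling the common factor. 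Your half-angle reduction $\frac{1-\cos(\bm\theta)}{\sin(\bm\theta)} = \tan(\bm\theta/2)$ short-circuits all of this: the identity $\frac{1-\tan A\tan B}{1+\tan A\tan B} = \frac{\cos(A+B)}{\cos(A-B)}$ with $A = \bm{\theta^l}/2$, $B = \bm{\theta^u}/2$ yields the claim in two lines. Your route buys three things: brevity; a cleaner safeguard against degenerate division, since $|\tan(\bm\theta/2)| < 1$ on the domain guarantees $1 + ab > 0$, whereas the paper's cancellation of $\cos(\bm\delta)-\cos(\bm\phi)$ is only legitimate because the footnote excludes $\bm{\theta^l} = 0$ and $\bm{\theta^u} = 0$ (exactly when that factor vanishes); and a free proof of the companion lemma, since the same substitution gives $g = \frac{a+b}{1+ab} = \frac{\sin(\bm\phi)}{\cos(\bm\delta)}$ immediately. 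What the paper's approach buys in exchange is reuse: its intermediate denominator computation is shared verbatim between the $f$ and $g$ lemmas, and it exercises the same identity toolkit used elsewhere in the appendix. Your handling of the square-root sign via $\cos(\bm\theta) > 0$ on $(-\bm\pi/2, \bm\pi/2)$ is the right justification and is more explicit than the paper's silent use of $\sec(\bm\theta)$.
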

\begin{proof}
We begin by developing the denominator of $f(x,y)$ as follows,
\begin{subequations}
\begin{align}
& 1 + \left( \frac{\sqrt{1+\tan(\bm {\theta^l})^2} -1}{\tan(\bm {\theta^l})} \right) \left( \frac{\sqrt{1+\tan(\bm {\theta^u})^2} -1}{\tan(\bm {\theta^u})} \right) \\
& 1 + \left( \frac{\sec(\bm {\theta^l}) -1}{\tan(\bm {\theta^l})} \right) \left( \frac{\sec(\bm {\theta^u}) -1}{\tan(\bm {\theta^u})} \right) \\
& 1 + \left( \frac{1 - \cos(\bm {\theta^l})}{\sin(\bm {\theta^l})} \right) \left( \frac{1 - \cos(\bm {\theta^u})}{\sin(\bm {\theta^u})} \right) \\
& 1 + \frac{1 - \cos(\bm {\theta^l}) - \cos(\bm {\theta^u}) + \cos(\bm {\theta^l}) \cos(\bm {\theta^u}) }{\sin(\bm {\theta^l})\sin(\bm {\theta^u})} \\
& 1 + \frac{1 - 2\cos(\bm {\phi}) \cos(\bm {\delta}) + \cos(\bm {\phi})^2 + \cos(\bm {\delta})^2 - 1 }{\cos(\bm {\delta})^2 - \cos(\bm {\phi})^2} \\
& \frac{2 \cos(\bm {\delta})^2 - 2 \cos(\bm {\phi}) \cos(\bm {\delta}) }{\cos(\bm {\delta})^2 - \cos(\bm {\phi})^2} 
\end{align}
\end{subequations}
Similarly the numerator develops into,
\begin{subequations}
\begin{align}
& \frac{-2 \cos(\bm {\phi})^2 + 2 \cos(\bm {\phi}) \cos(\bm {\delta}) }{\cos(\bm {\delta})^2 - \cos(\bm {\phi})^2} 
\end{align}
\end{subequations}
Combining the numerator and denominator yields,
\begin{subequations}
\begin{align}
& \frac{-\cos(\bm {\phi})^2 + \cos(\bm {\phi}) \cos(\bm {\delta}) }{\cos(\bm {\delta})^2 - \cos(\bm {\phi}) \cos(\bm {\delta}) } \\
& \frac{\cos(\bm {\phi}) (\cos(\bm {\delta}) - \cos(\bm {\phi})) }{\cos(\bm {\delta}) (\cos(\bm {\delta}) - \cos(\bm {\phi}))} \\
& \frac{\cos(\bm {\phi})}{\cos(\bm {\delta})} 
\end{align}
\end{subequations}
completing the proof.
\end{proof}

\begin{lemma}
$g(\tan(\bm {\theta^l}), \tan(\bm {\theta^u})) = \frac{\sin(\bm \phi)}{\cos(\bm \delta)}$
\end{lemma}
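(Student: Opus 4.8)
The plan is to mirror the proof of the preceding lemma for $f$, exploiting the key structural fact that $g(x,y)$ shares the \emph{same} denominator as $f(x,y)$. Since that denominator was already shown, when evaluated at $x=\tan(\bm{\theta^l}),\, y=\tan(\bm{\theta^u})$, to simplify to $\frac{2\cos(\bm\delta)^2 - 2\cos(\bm\phi)\cos(\bm\delta)}{\cos(\bm\delta)^2 - \cos(\bm\phi)^2}$, I would reuse that computation verbatim and direct all effort toward the numerator of $g$, which is the only genuinely new quantity.

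First I would simplify each of the two summands in the numerator. Using $\sqrt{1+\tan^2(\cdot)} = \sec(\cdot)$ followed by the elementary identity $\frac{\sec-1}{\tan} = \frac{1-\cos}{\sin}$, the numerator reduces to $\frac{1-\cos(\bm{\theta^l})}{\sin(\bm{\theta^l})} + \frac{1-\cos(\bm{\theta^u})}{\sin(\bm{\theta^u})}$. Placing these over the common denominator $\sin(\bm{\theta^l})\sin(\bm{\theta^u})$ and expanding, the numerator of the combined fraction collects into $\bigl[\sin(\bm{\theta^l})+\sin(\bm{\theta^u})\bigr] - \bigl[\cos(\bm{\theta^l})\sin(\bm{\theta^u}) + \cos(\bm{\theta^u})\sin(\bm{\theta^l})\bigr]$.

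Next I would invoke the product-to-sum identities listed at the start of this appendix, namely $\sin(\bm{\theta^l})+\sin(\bm{\theta^u}) = 2\cos(\bm\delta)\sin(\bm\phi)$, the pair $\cos(\bm{\theta^l})\sin(\bm{\theta^u}) + \cos(\bm{\theta^u})\sin(\bm{\theta^l}) = 2\sin(\bm\phi)\cos(\bm\phi)$, and $\sin(\bm{\theta^l})\sin(\bm{\theta^u}) = \cos(\bm\delta)^2 - \cos(\bm\phi)^2$. Together these rewrite the numerator of $g$ as $\frac{2\sin(\bm\phi)\bigl(\cos(\bm\delta)-\cos(\bm\phi)\bigr)}{\cos(\bm\delta)^2 - \cos(\bm\phi)^2}$, which shares the common factor $\cos(\bm\delta)^2-\cos(\bm\phi)^2$ with the denominator.

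Finally I would divide numerator by denominator: the factor $\cos(\bm\delta)^2 - \cos(\bm\phi)^2$ cancels immediately, leaving $\frac{2\sin(\bm\phi)(\cos(\bm\delta)-\cos(\bm\phi))}{2\cos(\bm\delta)(\cos(\bm\delta)-\cos(\bm\phi))}$, and a second cancellation of $\cos(\bm\delta)-\cos(\bm\phi)$ yields the claimed $\frac{\sin(\bm\phi)}{\cos(\bm\delta)}$. The only delicate point — the nearest thing to an obstacle — is justifying this last cancellation, which requires $\cos(\bm\delta)\neq\cos(\bm\phi)$; but $\cos(\bm\delta)=\cos(\bm\phi)$ forces $\bm{\theta^l}=0$ or $\bm{\theta^u}=0$, precisely the degenerate cases already excluded by the footnote (and for which $g$ is not even defined, since it divides by $\tan(\bm{\theta^l})$ and $\tan(\bm{\theta^u})$). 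Thus the cancellation is valid on exactly the domain where the statement is asserted, and no new case analysis beyond that of the lemma for $f$ is needed.
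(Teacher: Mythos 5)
Your proof is correct and follows essentially the same route as the paper's: simplify the numerator of $g$ via $\sqrt{1+\tan^2(\cdot)}=\sec(\cdot)$ and the sum-to-product identities listed at the start of the appendix, reuse the denominator already computed in the lemma for $f$, and cancel the common factor $\cos(\bm\delta)-\cos(\bm\phi)$. Your explicit check that this factor is nonzero (since $\cos(\bm\delta)=\cos(\bm\phi)$ would force $\bm{\theta^l}=0$ or $\bm{\theta^u}=0$, precisely the cases excluded by the footnote) is a minor refinement that the paper leaves implicit.
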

\begin{proof}
We begin by developing the numerator of $f(x,y)$ as follows,
\begin{subequations}
\begin{align}
& \left( \frac{\sqrt{1+\tan(\bm {\theta^l})^2} -1}{\tan(\bm {\theta^l})} \right) + \left( \frac{\sqrt{1+\tan(\bm {\theta^u})^2} -1}{\tan(\bm {\theta^u})} \right) \\
& \left( \frac{\sec(\bm {\theta^l}) -1}{\tan(\bm {\theta^l})} \right) + \left( \frac{\sec(\bm {\theta^u}) -1}{\tan(\bm {\theta^u})} \right) \\
& \left(\frac{1- \cos(\bm {\theta^l})}{\sin(\bm {\theta^l})} \right) + \left( \frac{1- \cos(\bm {\theta^u})}{\sin(\bm {\theta^u})} \right) \\
& \frac{ \sin(\bm {\theta^l}) + \sin(\bm {\theta^u}) - \cos(\bm {\theta^l}) \sin(\bm {\theta^u}) - \cos(\bm {\theta^u}) \sin(\bm {\theta^l})}{\sin(\bm {\theta^l})\sin(\bm {\theta^u})} \\
& \frac{ 2 \sin(\bm {\phi}) \cos(\bm {\delta}) - 2  \sin(\bm {\phi}) \cos(\bm {\phi})}{\cos(\bm {\delta})^2 - \cos(\bm {\phi})^2} 
\end{align}
\end{subequations}
Combining the numerator and denominator yields,
\begin{subequations}
\begin{align}
& \frac{\sin(\bm {\phi}) \cos(\bm {\delta}) - \sin(\bm {\phi}) \cos(\bm {\phi}) }{\cos(\bm {\delta})^2 - \cos(\bm {\phi}) \cos(\bm {\delta}) } \\
& \frac{\sin(\bm {\phi}) (\cos(\bm {\delta}) - \cos(\bm {\phi})) }{\cos(\bm {\delta}) (\cos(\bm {\delta}) - \cos(\bm {\phi}))} \\
& \frac{\sin(\bm {\phi})}{\cos(\bm {\delta})} 
\end{align}
\end{subequations}
completing the proof.
\end{proof}

With the simplified arithmetic form of \eqref{eq:cc_f} and \eqref{eq:cc_g}, we can now concisely write the constants from \cite{chen_qcqp} (i.e. \eqref{eq:cc_const_1}-\eqref{eq:cc_const_5} ) using the parameters of this paper as follows,
%
\begin{subequations}
\begin{align}
& \bm \pi_{0} = - \bm {v^l}_{i} \bm {v^l}_{j} \bm {v^u}_{i} \bm {v^u}_{j} \\
& \bm \pi_{1} = - \bm {v^l}_{j} \bm {v^u}_{j} \\
& \bm \pi_{2} = - \bm {v^l}_{i} \bm {v^u}_{i} \\
& \bm \pi_{3} = \bm {v^\sigma}_i \bm {v^\sigma}_j \frac{\cos(\bm \phi_{ij})}{\cos(\bm \delta_{ij})} \\
& \bm \pi_{4} = \bm {v^\sigma}_i \bm {v^\sigma}_j \frac{\sin(\bm \phi_{ij})}{\cos(\bm \delta_{ij})}  
\end{align}
\end{subequations}
Presentation of the cuts proposed in \cite{chen_qcqp} in the format used here is,
\begin{subequations}
\begin{align}
 \bm \pi_{3} w^R_{ij} + \bm \pi_{4} w^I_{ij} + (\bm \pi_{1} - (\bm {v^u}_{j})^2) w_i + (\bm \pi_{2} - (\bm {v^u}_{i})^2) w_j &\geq  - (\bm \pi_{0} + (\bm {v^u}_{i})^2 (\bm {v^u}_{j})^2) \label{eq:cc_cut_1_alt} \\
 \bm \pi_{3} w^R_{ij} + \bm \pi_{4} w^I_{ij} + (\bm \pi_{1} - (\bm {v^l}_{j})^2) w_i + (\bm \pi_{2} - (\bm {v^l}_{i})^2) w_j &\geq  - (\bm \pi_{0} + (\bm {v^l}_{i})^2 (\bm {v^l}_{j})^2) \label{eq:cc_cut_2_alt} 
\end{align}
\end{subequations}

\begin{lemma}
The inequalities \eqref{eq:cc_cut_1_alt}-\eqref{eq:cc_cut_2_alt} from \cite{chen_qcqp} are equivalent to the lifted nonlinear cuts \eqref{eq:4d_cut_1}-\eqref{eq:4d_cut_2}, respectively.
\end{lemma}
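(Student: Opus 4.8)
The plan is to substitute the simplified constants $\bm \pi_0,\dots,\bm \pi_4$ established in the two lemmas immediately above directly into \eqref{eq:cc_cut_1_alt}--\eqref{eq:cc_cut_2_alt}, and to check that each reduces to the corresponding lifted nonlinear cut after clearing a single common positive factor. Because \eqref{eq:cc_cut_1_alt} and \eqref{eq:4d_cut_1} are built from the upper bounds $\bm {v^u}$ while \eqref{eq:cc_cut_2_alt} and \eqref{eq:4d_cut_2} use the lower bounds $\bm {v^l}$, and the two derivations coincide up to that systematic replacement, I would carry out the argument in full for the first pair and then indicate the trivial modification for the second.

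First I would collapse the coefficients of $w_i$ and $w_j$. Using $\bm \pi_1 = -\bm {v^l}_j\bm {v^u}_j$, the $w_i$ coefficient of \eqref{eq:cc_cut_1_alt} factors as
\begin{equation*}
\bm \pi_1 - (\bm {v^u}_j)^2 = -\bm {v^u}_j(\bm {v^l}_j + \bm {v^u}_j) = -\bm {v^u}_j\,\bm {v^\sigma}_j,
\end{equation*}
and symmetrically the $w_j$ coefficient becomes $-\bm {v^u}_i\,\bm {v^\sigma}_i$. Next, with $\bm \pi_0 = -\bm {v^l}_i\bm {v^l}_j\bm {v^u}_i\bm {v^u}_j$, the right-hand side simplifies to
\begin{equation*}
-\bigl(\bm \pi_0 + (\bm {v^u}_i)^2(\bm {v^u}_j)^2\bigr) = \bm {v^u}_i\bm {v^u}_j\bigl(\bm {v^l}_i\bm {v^l}_j - \bm {v^u}_i\bm {v^u}_j\bigr).
\end{equation*}
Substituting $\bm \pi_3 = \bm {v^\sigma}_i\bm {v^\sigma}_j\cos(\bm \phi_{ij})/\cos(\bm \delta_{ij})$ and $\bm \pi_4 = \bm {v^\sigma}_i\bm {v^\sigma}_j\sin(\bm \phi_{ij})/\cos(\bm \delta_{ij})$ for the $w^R_{ij}$ and $w^I_{ij}$ terms, inequality \eqref{eq:cc_cut_1_alt} then reads
\begin{equation*}
\frac{\bm {v^\sigma}_i\bm {v^\sigma}_j}{\cos(\bm \delta_{ij})}\bigl(w^R_{ij}\cos(\bm \phi_{ij}) + w^I_{ij}\sin(\bm \phi_{ij})\bigr) - \bm {v^u}_j\bm {v^\sigma}_j\, w_i - \bm {v^u}_i\bm {v^\sigma}_i\, w_j \geq \bm {v^u}_i\bm {v^u}_j\bigl(\bm {v^l}_i\bm {v^l}_j - \bm {v^u}_i\bm {v^u}_j\bigr).
\end{equation*}

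Finally I would multiply both sides by $\cos(\bm \delta_{ij})$. Since $\bm {\theta^l}_{ij} < \bm {\theta^u}_{ij}$ within $[-\bm \pi/2,\bm \pi/2]$ gives $\bm \delta_{ij} \in (0,\bm \pi/2]$, and the very appearance of $\cos(\bm \delta_{ij})$ in the denominators of $\bm \pi_3,\bm \pi_4$ already presupposes $\bm \delta_{ij} < \bm \pi/2$, we have $\cos(\bm \delta_{ij}) > 0$; multiplying through therefore clears the denominator and preserves the inequality direction, producing exactly \eqref{eq:4d_cut_1}. Repeating the three factorizations with $\bm {v^u}$ replaced by $\bm {v^l}$ yields the coefficients $-\bm {v^l}_j\bm {v^\sigma}_j$, $-\bm {v^l}_i\bm {v^\sigma}_i$ and the right-hand side $-\bm {v^l}_i\bm {v^l}_j(\bm {v^l}_i\bm {v^l}_j - \bm {v^u}_i\bm {v^u}_j)$, so that \eqref{eq:cc_cut_2_alt} becomes \eqref{eq:4d_cut_2} under the same multiplication.

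I do not expect a genuine obstacle here: the only nontrivial content — rewriting the transcendental expressions $f$ and $g$ of \cite{chen_qcqp} in terms of $\cos(\bm \phi_{ij})/\cos(\bm \delta_{ij})$ and $\sin(\bm \phi_{ij})/\cos(\bm \delta_{ij})$ — is already discharged by the two preceding lemmas, and everything that remains is linear bookkeeping over products of the voltage bounds. The one point demanding care is the positivity of $\cos(\bm \delta_{ij})$, which is precisely what turns the final multiplication into an equivalence rather than a one-way implication.
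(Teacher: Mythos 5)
Your proposal is correct and follows exactly the paper's route: the paper's proof is the one-line remark ``Expanding the $\bm \pi$ constants and factoring the common terms yields the result,'' and you simply carry out that expansion and factorization explicitly, with the multiplication by $\cos(\bm \delta_{ij})>0$ converting the divided form into \eqref{eq:4d_cut_1}--\eqref{eq:4d_cut_2}. Your added care about the positivity of $\cos(\bm \delta_{ij})$ (implicit in the paper, and guaranteed since the $\tan(\bm{\theta^l}_{ij}),\tan(\bm{\theta^u}_{ij})$ parameterization of \cite{chen_qcqp} already excludes $\bm \delta_{ij}=\bm\pi/2$) is a welcome explicit justification of the equivalence rather than a deviation.
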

\begin{proof}
Expanding the $\bm \pi$ constants and factoring the common terms yields the result.
\end{proof}

\end{document}